\documentclass[12pt]{amsart}

\usepackage{amsfonts,amssymb,amsthm,mathtools}
\usepackage{aliascnt}
\usepackage[hidelinks]{hyperref}
\usepackage[nameinlink,noabbrev]{cleveref}

\theoremstyle{plain}
\newtheorem{theorem}{Theorem}[section]
\newaliascnt{proposition}{theorem}
\newtheorem{proposition}[proposition]{Proposition}
\aliascntresetthe{proposition}
\newaliascnt{lemma}{theorem}
\newtheorem{lemma}[lemma]{Lemma}
\aliascntresetthe{lemma}
\newaliascnt{corollary}{theorem}
\newtheorem{corollary}[corollary]{Corollary}
\aliascntresetthe{corollary}

\theoremstyle{definition}
\newaliascnt{definition}{theorem}
\newtheorem{definition}[definition]{Definition}
\aliascntresetthe{definition}

\theoremstyle{remark}
\newtheorem*{remark}{Remark}

\crefname{theorem}{theorem}{theorems}
\Crefname{theorem}{Theorem}{Theorems}
\crefname{proposition}{proposition}{propositions}
\Crefname{proposition}{Proposition}{Propositions}
\crefname{lemma}{lemma}{lemmas}
\Crefname{lemma}{Lemma}{Lemmas}
\crefname{corollary}{corollary}{corollaries}
\Crefname{corollary}{Corollary}{Corollaries}
\crefname{definition}{definition}{definitions}
\Crefname{definition}{Definition}{Definitions}

\newcommand{\defi}[1]{\textbf{#1}}
\newcommand{\bbZ}{\mathbb{Z}}
\newcommand{\calP}{\mathcal{P}}
\newcommand{\parent}[1]{\left( #1 \right)}

\hypersetup{hypertexnames=false}
\makeatletter
\providecommand*{\toclevel@theorem}{2}
\providecommand*{\toclevel@definition}{2}
\providecommand*{\toclevel@lemma}{2}
\providecommand*{\toclevel@corollary}{2}
\providecommand*{\toclevel@proposition}{2}
\providecommand*{\toclevel@remark}{2}
\makeatother

\newcommand{\primorial}[1]{\calP\parent{#1}}
\newcommand{\ksigma}[1]{\sigma^{[#1]}}
\newcommand{\ksigmagen}[2]{\sigma^{[#1]}_{#2}}
\newcommand{\rksigma}[1]{\ksigmagen{#1}{-1}}

\newcommand{\rkrho}[1]{\hat{\rho}^{[#1]}}
\newcommand{\RobinScale}[1]{e^\gamma\log\log #1}
\newcommand{\RobinScalePower}[2]{\parent{\RobinScale{#1}}^{#2}}
\newcommand{\rkRobinRatio}[2]{\frac{\rkrho{#1}(#2)}{\RobinScale{#2}}}
\newcommand{\MertensProduct}[1]{\prod_{p \leq #1}(1 - p^{-1})^{-1}}

\newcommand{\MertensThreshold}{19{,}421}
\newcommand{\PrimorialLogThreshold}{19{,}183}
\newcommand{\MertensCorrection}{0.10952}
\newcommand{\TransferCorrection}{0.42}

\newcommand{\CertifiedKappa}{100{,}000}
\newcommand{\CertifiedThetaArgument}{75{,}002}
\newcommand{\CertifiedThetaLower}{74670.66037438614}
\newcommand{\CertifiedThetaUpper}{74670.66037438615}
\newcommand{\CertifiedAnalyticCutoff}{246{,}590}
\newcommand{\CertifiedLastBadPrime}{227{,}951}
\newcommand{\CertifiedTailPrime}{227{,}977}
\newcommand{\CertifiedEndpointEvents}{39{,}454}
\newcommand{\CertifiedFirstEndpoint}{55{,}440}
\newcommand{\CertifiedEndpointMargin}{0.01151623688}
\newcommand{\CertifiedInitialChecks}{50{,}399}
\newcommand{\CertifiedDirectMinimumN}{10{,}080}
\newcommand{\CertifiedDirectMargin}{0.05748622492}

\begin{document}
\title[Reciprocal LCM powers and Robin's criterion]{Reciprocal LCM powers and Robin's criterion}
\author{Grant Molnar}
\email{molnar.grant.5772@gmail.com}

\begin{abstract}
For $\kappa > 1$, we study the reciprocal LCM-power function
\[
\sigma^{[\kappa]}_{-1}(n)
\coloneqq
\sum_{d \mid n}\mu(d)\,\sigma_{-1}(n / d)^\kappa.
\]
We determine its average and extremal orders.  A natural Robin-scale normalization is strictly increasing in $\kappa$ and tends to $\sigma_{-1}$, yielding an RH-equivalent criterion.  Unconditionally, the corresponding Robin inequality holds eventually on every bounded parameter interval.  Directed interval arithmetic proves the inequality for each $n > 5040$ and $1 < \kappa \le 100{,}000$.
\end{abstract}

\subjclass[2020]{11A25 (primary), 11M26, 11N37 (secondary)}
\keywords{reciprocal LCM powers; sum-of-divisors function; average order; maximal order; Robin's inequality; Riemann hypothesis}

\maketitle


\section{Introduction}

For a positive integer $n$, we write
\begin{equation}
\sigma_{-1}(n) \coloneqq \sum_{d \mid n}\frac1d = \frac{\sigma(n)}n.
\end{equation}
Gr\"{o}nwall's theorem gives the maximal-order formula \cite{Gronwall}
\begin{equation}
\limsup_{n \to \infty}\frac{\sigma_{-1}(n)}{\log\log n} = e^\gamma.
\end{equation}
Robin found the corresponding sharp explicit boundary.

\begin{theorem}[{Robin's theorem \cite[Th\'eor\`emes~1 et~2]{Robin}}]\label{thm:classical-robin}
The Riemann hypothesis is equivalent to
\begin{equation}\label{eq:classical-robin}
\sigma_{-1}(n) < \RobinScale{n}
\qquad\text{for }n > 5040.
\end{equation}
Unconditionally, we have
\begin{equation}\label{eq:robin-unconditional-bound}
\sigma_{-1}(n) < \RobinScale{n} + \frac{0.6483}{\log\log n}
\qquad\text{for }n \ge 3.
\end{equation}
\end{theorem}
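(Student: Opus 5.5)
The plan follows Robin's original strategy, which splits into a reduction to colossally abundant numbers and an explicit analysis at primorial-type arguments. \textbf{Step 1 (reduction).} Set $f(n) = \sigma_{-1}(n)/\log\log n$. I would show that if \eqref{eq:classical-robin} fails for some $n > 5040$, then it fails for some colossally abundant number $N$. The tool is the standard convexity argument: every $n$ lies between two consecutive colossally abundant numbers $N_1 \le n \le N_2$, which are attached to consecutive critical exponents $\varepsilon$. Since $\sigma_{-1}(n)/n^{\varepsilon}$ is maximized at the $N_i$, and $\log\log$ is concave in $\log n$, one gets $f(n) \le \max(f(N_1), f(N_2))$. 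So it suffices to treat numbers of the form $N = \prod_{p \le x} p^{a_p(x)}$. For these the largest prime is $x$ and $\log N = \theta(x) + O(\sqrt{x})$.

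\textbf{Step 2 (RH implies the inequality).} For such $N$ I would write
\[
\sigma_{-1}(N) = \prod_{p \le x} \frac{1 - p^{-a_p - 1}}{1 - p^{-1}} \le \MertensProduct{x} \cdot \prod_{p \le \sqrt{x}}(1 - p^{-a_p-1}).
\]
Under RH, the explicit formula gives Mertens' product in the form
\[
\prod_{p\le x}(1-p^{-1})^{-1} = e^\gamma \log\theta(x)\,\Bigl(1 + O\bigl(x^{-1/2}\log x\bigr)\Bigr),
\]
with an explicit negative secondary term of size $x^{-1/2}(2+\beta)$ coming from the zeros. This is Nicolas' refinement, with $\beta=\sum_\rho 1/|\rho|^2$. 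I would then compare $\log\theta(x)$ with $\log\log N$. The loss from the factor $\prod(1 - p^{-a_p-1})$, of order $\approx -x^{-1/2}\cdot 2$, has to beat the gap between $\log\theta(x)$ and $\log\log N$. Checking that these explicit constants have the right sign for $x$ large, and verifying the finitely many remaining colossally abundant $N$ by computer, gives \eqref{eq:classical-robin}.

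\textbf{Step 3 (converse).} If RH fails, let $\Theta > 1/2$ be the supremum of the real parts of the zeta zeros. Oscillation theorems (Landau, Littlewood-type $\Omega_\pm$ results for $\theta(x) - x$) give
\[
\log \MertensProduct{x} - \log(e^\gamma \log\theta(x)) = \Omega_+(x^{-b})
\]
for every $b$ with $1 - \Theta < b < 1/2$. Taking $N$ to be the primorial $\prod_{p\le x} p$, one has $\log\log N = \log\theta(x)$ and $\sigma_{-1}(N) = \prod_{p\le x}(1+1/p)$. This product differs from Mertens' product by the factor $\prod(1-p^{-2}) = \zeta(2)^{-1}(1+O(1/x))$. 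That factor is problematic, so instead I would take $N=\prod_{p\le x}p^{a_p}$ with large exponents on small primes, which kills that constant at negligible cost in $\log N$. The $\Omega_+$ term, being much larger than $x^{-1/2}$, then forces $\sigma_{-1}(N) > \RobinScale{N}$ infinitely often.

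\textbf{Step 4 (unconditional bound).} For \eqref{eq:robin-unconditional-bound} I would use the explicit Rosser--Schoenfeld bound $\MertensProduct{x} < e^\gamma \log x\,(1 + 1/(2\log^2 x))$, together with $\theta(x) > x(1 - \epsilon(x))$. Applying these at colossally abundant $N$ via Step 1, the error appears as $O(1/\log\log N)$; the explicit constant $0.6483$ comes from optimizing at small $x$, with a finite computation below the threshold. The main obstacle is Step 2. It needs sharp explicit RH-conditional estimates for $\theta$ and for Mertens' product at the $x^{-1/2}$ scale, and the sign comparison between the zero-sum term and the prime-power correction is delicate.
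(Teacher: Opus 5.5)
The paper does not prove this statement. It quotes Robin (Th\'eor\`emes~1 et~2, with the converse from \S4), so the comparison is with Robin's argument. Your architecture is his: convexity reduction to colossally abundant numbers, Nicolas's RH-conditional estimate for $\MertensProduct{x}$, and an $\Omega_+$ oscillation theorem for the converse. Steps~1 and~3 are acceptable sketches, but Step~2 would not go through as written.

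\emph{Step 2 discards the decisive factor.} Your displayed bound keeps only the factors $1-p^{-a_p-1}$ with $p\le\sqrt x$. Every such $p$ has $a_p\ge2$, so these factors contribute only $O(x^{-2/3})$. The factors you dropped are the ones that matter: $1-p^{-2}$ for the exponent-one primes $x_2<p\le x$, where $x_2\sim\sqrt{2x}$. They contribute about $-\sqrt2/(\sqrt x\log x)$ to the logarithm.

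\emph{The Nicolas term has the wrong sign and scale.} Under RH, $\log\MertensProduct{x}-\gamma-\log\log\theta(x)$ is \emph{positive} (Nicolas's theorem on primorials) and of size up to about $(2+\beta)/(\sqrt x\log x)$. The $2$ comes from prime squares via $\theta(t)=\psi(t)-\sqrt t+O(t^{1/3})$; the zeros contribute only $\pm\beta/(\sqrt x\log x)$. If it were negative, as you write, Step~2 would be trivial, since $\log\theta(x)\le\log\log N$ whenever $\primorial{x}\mid N$.

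\emph{The gap between $\log\theta(x)$ and $\log\log N$ helps you.} It is the second saving: $\log N\ge\theta(x)+\theta(x_2)$ lifts $\log\log\log N$ above $\log\log\theta(x)$ by about $\sqrt2/(\sqrt x\log x)$. Robin's argument closes because the two savings together beat the Nicolas term, roughly $2+\beta<2\sqrt2$ with $\beta\approx0.046$. With your display only one saving survives, and $\sqrt2<2+\beta$, so the comparison fails. Your error term $O(x^{-1/2}\log x)$ is in any case far too coarse to decide a comparison at scale $x^{-1/2}/\log x$.

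\emph{Step 4 fails with the inputs you name.} The Rosser--Schoenfeld factor $1+1/(2\log^2x)$ alone contributes an additive error $e^\gamma/(2\log x)\approx0.89/\log x$ to $\sigma_{-1}(N)$. A lower bound $\theta(x)>x(1-\epsilon(x))$ can only enlarge this, and the colossally abundant structure changes it by just $O(x^{-1/2})$. So at large $x$ these estimates give a constant of about $0.89$ or more, not $0.6483$, and no computation at small $x$ repairs that. The value $0.6482\ldots$ is attained at $n=12$. Making that the binding case requires substantially sharper explicit Mertens and Chebyshev bounds in the large range, plus computation in between. Compare \Cref{prop:mertens-transfer}, where $0.10952/\log^2x$ and Schoenfeld's $1/(8\log x)$ combine to give $0.42$.

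\emph{An uncovered window.} The numbers $5040$ and $55440$ are consecutive colossally abundant numbers, and $5040$ itself violates \eqref{eq:classical-robin}. The convexity transfer of Step~1 therefore says nothing for $5041\le n<55440$, and that window has to be checked directly, not just the remaining colossally abundant numbers.
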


We write $\mu$ for the M\"{o}bius function and $\omega(n)$ for the number of
distinct prime divisors of $n$.  We study a reciprocal LCM-power family that
interpolates back to $\sigma_{-1}$.

\begin{definition}\label{def:reciprocal-lcm-power}
For real $\kappa > 1$, the \defi{reciprocal LCM-power function} is
\begin{equation}\label{eqn:reciprocal-lcm-power}
\rksigma{\kappa}(n)
\coloneqq
\sum_{d \mid n}\mu(d)\sigma_{-1}(n / d)^\kappa.
\end{equation}
\end{definition}

The function $\rksigma{\kappa}$ is multiplicative, and its prime-power
values are
\begin{equation}\label{eq:rksigma-at-prime-powers}
\rksigma{\kappa}(p^a)
= \sigma_{-1}(p^a)^\kappa - \sigma_{-1}(p^{a - 1})^\kappa.
\end{equation}
M\"{o}bius inversion gives
\begin{equation}
\sigma_{-1}(n)^\kappa = \sum_{d \mid n}\rksigma{\kappa}(d).
\end{equation}
When $\kappa = k$ is a positive integer, \eqref{eqn:reciprocal-lcm-power} has the
LCM-convolution interpretation
\begin{equation}\label{eq:rksigma-lcm-identity}
\rksigma{k}(n)
= \sum_{[d_1, \ldots, d_k] = n}\frac1{d_1\cdots d_k}.
\end{equation}

\begin{theorem}[{\Cref{thm:reciprocal-average-order,thm:lower-order,thm:gronwall}}]\label{thm:intro-order-theory}
For each real $\kappa > 1$, we have
\[
    \sum_{n \le x}\rksigma{\kappa}(n)
    = \frac{(\log x)^\kappa}{\Gamma(\kappa + 1)}
    + O_\kappa\!\left((\log x)^{\kappa - 1}\right)
\]
as $x \to \infty$, and the summatory function admits a complete Selberg--Delange expansion.  Moreover, we have
\[
    \liminf_{n \to \infty}
    \frac{n\,\rksigma{\kappa}(n)}{\kappa^{\omega(n)}} = 1
\]
and
\[
    \limsup_{n \to \infty}
    \frac{n\,\rksigma{\kappa}(n)}
    {\kappa^{\omega(n)}\RobinScalePower{n}{\kappa - 1}} = 1.
\]
\end{theorem}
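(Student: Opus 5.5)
The plan is to treat the three assertions separately, all starting from the prime-power formula \eqref{eq:rksigma-at-prime-powers}. For a prime $p$ and $a \ge 1$ the mean value theorem gives
\[
\rksigma{\kappa}(p^a) = \kappa\,\xi^{\kappa-1}\,p^{-a}
\]
for some $\xi \in (\sigma_{-1}(p^{a-1}), \sigma_{-1}(p^a))$. In particular
\[
\rksigma{\kappa}(p) = (1+p^{-1})^\kappa - 1 = \kappa p^{-1} + O_\kappa(p^{-2}),
\]
and $\rksigma{\kappa}(p^a) \ll_\kappa p^{-a}$ for $a \ge 2$.

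For the average order, I will study the Dirichlet series $F(s)=\sum_n \rksigma{\kappa}(n) n^{-s}$. The estimates above show that its Euler factor is $1 + \kappa p^{-1-s} + O(p^{-2-2\Re s} + p^{-2-\Re s})$. Hence
\[
F(s) = \zeta(s+1)^\kappa G(s),
\]
where $G(s) = \prod_p (1-p^{-1-s})^\kappa\bigl(1+\sum_{a\ge1}\rksigma{\kappa}(p^a)p^{-as}\bigr)$ converges absolutely and is holomorphic for $\Re s > -1/2$. The Selberg--Delange method, applied at the shifted abscissa $s=0$, then gives the full asymptotic expansion
\[
\sum_{n\le x}\rksigma{\kappa}(n) = \sum_{j} c_j(\log x)^{\kappa-j} + O\bigl(x^{-c}\bigr).
\]
The leading coefficient is $G(0)/\Gamma(\kappa+1)$. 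To evaluate $G(0)$, I will telescope the local factor at $s=0$:
\[
\sum_{a\ge0}\rksigma{\kappa}(p^a) = \lim_{a \to \infty}\sigma_{-1}(p^a)^\kappa = (1-p^{-1})^{-\kappa}.
\]
So each factor of $G(0)$ equals $1$, and $G(0)=1$. An equally easy alternative is to use $\sum_{n\le x} \rksigma{\kappa}(n) = \sum_{n\le x} n^{-1}\cdot(\text{coefficient function})$, but Selberg--Delange with the explicit $G$ is cleaner.

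For the liminf, I will write $n\,\rksigma{\kappa}(n)/\kappa^{\omega(n)} = \prod_{p^a\| n} f(p^a)$, where $f(p^a) = p^a\rksigma{\kappa}(p^a)/\kappa = \xi^{\kappa-1}$ with $\xi>1$. Since $\kappa > 1$, every factor exceeds $1$, so the liminf is at least $1$. To show it is attained, I will take $n=p$ prime and let $p\to\infty$; then $f(p) = p((1+1/p)^\kappa-1)/\kappa \to 1$.

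For the limsup, I will use $\xi < \sigma_{-1}(p^a)$, which gives
\[
\frac{n\,\rksigma{\kappa}(n)}{\kappa^{\omega(n)}} < \prod_{p^a\|n}\sigma_{-1}(p^a)^{\kappa-1} = \sigma_{-1}(n)^{\kappa-1}.
\]
Gr\"onwall's theorem then yields a limsup of at most $1$. For the lower bound I will use $\xi > \sigma_{-1}(p^{a-1})$, so the ratio is at least $\sigma_{-1}(n/\mathrm{rad}(n))^{\kappa-1}$. This forces the choice of extremal sequence: it must make $\sigma_{-1}(n')$ close to $\RobinScale{n}$, where $n' = n/\mathrm{rad}(n)$. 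I will take $n = m^2$ with $m=\primorial{x}^{M}$, where $M$ grows slowly. Then $n' = m^2/\mathrm{rad}(m)$ is divisible by $\primorial{x}^{2M-1}$, so
\[
\sigma_{-1}(n') \ge \prod_{p\le x}\bigl(1-p^{-2M}\bigr)\MertensProduct{x} \sim e^\gamma\log x
\]
by Mertens. Meanwhile $\log n = 2M\theta(x)\sim 2Mx$, so $\log\log n = \log x + O(\log M)$. Choosing $M = \lfloor \log x\rfloor$ keeps the error $o(\log x)$ and gives limsup at least $1$.

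The main obstacle is this last lower-bound construction: the $\xi>\sigma_{-1}(p^{a-1})$ bound loses the full factor at primes with $a=1$. Everything then hinges on checking that squaring with slowly growing exponents costs only $o(\log\log n)$. If the crude bound proves too lossy, I would instead bound $\xi$ more precisely via $\xi^{\kappa-1}\ge \sigma_{-1}(p^a)^{\kappa-1}(1-O(p^{-a}))$. This gives a factor $\prod_p(1-O(p^{-a}))$ that stays bounded away from $0$ once all exponents are at least $2$.
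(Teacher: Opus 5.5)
Your proofs of the lower and maximal orders follow the paper's argument. You use the two sides of the mean value bound (\Cref{lem:discrete-mvt}), primes for the $\liminf$, Gr\"onwall for the upper half of the $\limsup$, and primorial powers with slowly growing exponents for the lower half. Two small remarks. Squaring is redundant: $\primorial{x}^M$ already has every exponent equal to $M \to \infty$, which is the paper's $n_k = \primorial{p_k}^{\,T(k)}$. Your fallback would not suffice as phrased: a correction factor that is merely bounded away from $0$ cannot give $\limsup \ge 1$. It must tend to $1$, and that again needs growing exponents.

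For the average order you take a genuinely different route.

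\textbf{The paper's route.} The paper quotes Tenenbaum's theorem for the weighted coefficients $b_\kappa(n) = n\,\rksigma{\kappa}(n)$, whose series $\zeta(s)^\kappa H_\kappa(s)$ is singular at $s = 1$. It then returns to $\sum_{n \le x}\rksigma{\kappa}(n)$ by Abel summation. This leaves an integration constant that Abel summation cannot see, which a separate Mellin comparison fixes: it is $0$ for non-integral $\kappa$ and $c_{\kappa,m}$ when $\kappa = m \in \mathbb N$.

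\textbf{Your route.} You run the contour argument directly on $F(s)x^s/s$ near $s = 0$. There the pole of the Perron kernel merges with $\zeta(s+1)^\kappa$ into the singularity $s^{-\kappa-1}(s\zeta(s+1))^\kappa G(s)$. Truncated Hankel integrals then produce the terms $(\log x)^{\kappa-j}/\Gamma(\kappa+1-j)$. Their coefficients are the Taylor coefficients of $(s\zeta(s+1))^\kappa G(s)$, which are exactly the paper's $c_{\kappa,j}$ because $G(s) = H_\kappa(1+s)$. The integer-$\kappa$ constant also appears automatically in the residue.

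\textbf{Trade-off.} You cannot cite Tenenbaum's theorem as stated. Applied to $F$ itself it only sees $z = 0$, since $F$ is holomorphic at $s = 1$, and so it gives no main term. You must therefore redo the Perron truncation and contour estimates at the new abscissa. This is routine, for example with $\sigma_0 = 1/\log x$ and $F(\sigma_0) \asymp (\log x)^\kappa$. The paper gains a black-box citation; you avoid both the Abel transfer and the step that determines the constant.

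One claim needs correcting: the remainder $O(x^{-c})$ is not available. For non-integral $\kappa$, $\zeta(s+1)^\kappa$ has branch points at the zeros of $\zeta(s+1)$. The contour can therefore only hug the zero-free region $\operatorname{Re}(s) \ge -c/\log(|\operatorname{Im} s| + 2)$, never a fixed line $\operatorname{Re}(s) = -c$. Also, the terms do not terminate for non-integral $\kappa$, so truncating at $j \le J$ leaves an error of the size of the next term. What the method actually gives is, for each $J$, the expansion with error $O_{\kappa,J}\bigl((\log x)^{\kappa-1-J}\bigr)$. That is exactly what the theorem asserts, so your conclusion stands.
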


The Robin problem calls for one further normalization.  Taking the
$(\kappa - 1)$st root places the reciprocal coefficient on the same scale as
$\sigma_{-1}$.

\begin{definition}\label{def:normalized-reciprocal-lcm-power}
For real $\kappa > 1$, define
\begin{equation}\label{eq:normalized-function}
\rkrho{\kappa}(n)
\coloneqq
\left(\frac{n\,\rksigma{\kappa}(n)}{\kappa^{\omega(n)}}\right)^{1 / (\kappa - 1)}.
\end{equation}
\end{definition}

At each prime power, division by $\sigma_{-1}(p^a)$ turns the local factor in
\eqref{eq:normalized-function} into a power mean.  This orders the family in
$\kappa$ and recovers Robin's classical boundary in the limit.

\begin{theorem}[{\Cref{lem:power-mean} and \Cref{thm:reciprocal-robin-criterion}}]\label{thm:intro-robin-criterion}
For each integer $n > 1$, the function
$\kappa \mapsto \rkrho{\kappa}(n)$ is strictly increasing and
\[
    \lim_{\kappa \to \infty}\rkrho{\kappa}(n) = \sigma_{-1}(n).
\]
Consequently, for any fixed unbounded set $\mathcal K \subset (1, \infty)$,
the Riemann hypothesis is equivalent to
\[
    \rkrho{\kappa}(n) < \RobinScale{n}
\]
for each integer $n > 5040$ and every $\kappa \in \mathcal K$.
\end{theorem}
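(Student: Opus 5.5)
Since both $n\,\rksigma{\kappa}(n)$ and $\kappa^{\omega(n)}$ are multiplicative, I would reduce everything to prime powers. By \eqref{eq:rksigma-at-prime-powers}, writing $x = \sigma_{-1}(p^{a-1})$ and $y = \sigma_{-1}(p^a) = x + p^{-a}$, the local factor of $n\,\rksigma{\kappa}(n)/\kappa^{\omega(n)}$ at $p^a \,\|\, n$ is
\[
\frac{p^a\,(y^\kappa - x^\kappa)}{\kappa}
= \frac{y^\kappa - x^\kappa}{\kappa\,(y - x)}
= \frac1{y-x}\int_x^y t^{\kappa-1}\,dt .
\]
Hence
\[
\rkrho{\kappa}(n) = \prod_{p^a \| n} M_{\kappa-1}(p^a),\qquad
M_r(p^a) \coloneqq \left(\frac1{y-x}\int_x^y t^{r}\,dt\right)^{1/r},
\]
and $M_r(p^a)$ is the $r$-th power mean of $t$ with respect to the uniform probability measure on $[x,y]$.

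The monotonicity claim then follows from the strict power mean inequality. For a nondegenerate probability measure, here uniform on an interval with $x < y$, the map $r \mapsto M_r$ is strictly increasing on $(0,\infty)$. To see this, apply Jensen's inequality to the strictly convex function $u \mapsto u^{s/r}$ for $s > r > 0$, with $u = t^r$. Equality in Jensen would force $t^r$ to be almost surely constant, which is impossible on a nondegenerate interval. Each local factor is positive, and $n > 1$ has at least one prime factor, so the product over primes is strictly increasing in $\kappa$.

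For the limit, the power mean of a measure with continuous positive density on $[x,y]$ tends to the essential supremum as $r \to \infty$. Concretely, the integral lies between $(y-\varepsilon)^r\varepsilon/(y-x)$ and $y^r$. Taking $r$-th roots and letting first $r \to \infty$ and then $\varepsilon \to 0$ gives $M_r(p^a) \to y = \sigma_{-1}(p^a)$. Multiplying over the finitely many primes dividing $n$ yields $\rkrho{\kappa}(n) \to \sigma_{-1}(n)$, since $\sigma_{-1}$ is multiplicative.

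For the criterion, I would argue both directions.
\begin{itemize}
\item[(i)] If RH holds, then by \Cref{thm:classical-robin} every $n > 5040$ satisfies $\sigma_{-1}(n) < \RobinScale{n}$. By strict monotonicity and the limit, $\rkrho{\kappa}(n) < \sigma_{-1}(n)$ for every finite $\kappa$, because the values increase strictly to the limit. So the inequality holds for all $\kappa \in \mathcal K$.
\item[(ii)] Conversely, suppose $\rkrho{\kappa}(n) < \RobinScale{n}$ for all $n > 5040$ and all $\kappa \in \mathcal K$. Since $\mathcal K$ is unbounded, we may take $\kappa \to \infty$ along $\mathcal K$, which gives $\sigma_{-1}(n) \le \RobinScale{n}$ for all $n > 5040$.
\end{itemize}

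The main obstacle is that (ii) only yields a non-strict inequality, whereas Robin's criterion needs strictness. I would handle this in one of two ways. The first is to note that equality $\sigma_{-1}(n) = e^\gamma \log\log n$ is impossible for integers $n > 5040$, because $e^{\gamma}\log\log n$ is irrational-type and the left side is rational. This is not provable cheaply, so it should be avoided. The preferred route is to invoke Robin's theorem in its sharper form: if RH fails, then $\sigma_{-1}(n) > \RobinScale{n}$ for infinitely many $n$, and in fact by a margin (Robin's Théorème 2 gives $\sigma_{-1}(n) \ge e^\gamma\log\log n\,(1 + c/(\log n)^{b})$ infinitely often, with $b < 1/2$). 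This strict violation contradicts the non-strict bound, so RH holds.

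If only the statement of \Cref{thm:classical-robin} as given may be used, I would instead cite the standard fact that the weak inequality $\le$ for $n > 5040$ is already equivalent to RH, since Robin's proof of the converse produces strict violations.
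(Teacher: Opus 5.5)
Your proposal is correct and follows the paper's argument. The paper likewise writes each local factor as the power mean of order $\kappa - 1$ of $t$ on an interval; it normalizes to $[u,1]$ with $u = \sigma_{-1}(p^{a-1})/\sigma_{-1}(p^a)$, which by homogeneity is equivalent to your $[x,y]$. It then uses strict power-mean monotonicity and the essential-supremum limit, as you do. For the converse, the paper skips your intermediate weak inequality: it fixes one of Robin's strict violations $\sigma_{-1}(n) > \RobinScale{n}$ and chooses $\kappa \in \mathcal K$ large enough that $\rkrho{\kappa}(n) > \RobinScale{n}$. That rests on exactly the strict-violation input you correctly identified as necessary.
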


At fixed parameter, the reciprocal family behaves differently.  Every bounded
parameter range is eventually good unconditionally, and a finite endpoint
verification extends this to a complete interval of parameters.

\begin{theorem}[{\Cref{thm:bounded-eventual,thm:unconditional-interval}}]\label{thm:intro-unconditional}
Consider the reciprocal Robin inequality
\begin{equation}\label{eq:intro-reciprocal-robin}
    \rkrho{\kappa}(n) < \RobinScale{n}.
\end{equation}
For every real $\kappa_0 > 1$, there is an explicit finite cutoff $N_{\kappa_0}$ such that
\eqref{eq:intro-reciprocal-robin} holds for each integer $n \ge N_{\kappa_0}$ and every
real $1 < \kappa \le \kappa_0$.  Moreover, \eqref{eq:intro-reciprocal-robin} holds
unconditionally for each integer $n > 5040$ and every real
$1 < \kappa \le \CertifiedKappa$.
\end{theorem}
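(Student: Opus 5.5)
By \Cref{lem:power-mean}, $\kappa \mapsto \rkrho{\kappa}(n)$ is increasing for each fixed $n > 1$. So for both assertions of \Cref{thm:intro-unconditional} it suffices to treat the single endpoint $\kappa = \kappa_0$: if $\rkrho{\kappa_0}(n) < \RobinScale{n}$, then the same bound holds for every $1 < \kappa \le \kappa_0$. The whole problem therefore reduces to a Robin-type inequality at one fixed parameter. This is why bounded parameter ranges behave so differently from the unbounded case in \Cref{thm:intro-robin-criterion}.

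For the eventual statement at fixed $\kappa_0$, I will work with the local factors
\[
g_{\kappa}(p^a) \coloneqq \left(\frac{p^a\bigl(\sigma_{-1}(p^a)^\kappa - \sigma_{-1}(p^{a-1})^\kappa\bigr)}{\kappa}\right)^{1/(\kappa-1)},
\]
so that $\rkrho{\kappa}(n) = \prod_{p^a \parallel n} g_\kappa(p^a)$. Since $\sigma_{-1}(p^a) - \sigma_{-1}(p^{a-1}) = p^{-a}$, the mean value theorem gives $g_\kappa(p^a) = \sigma_{-1}(\xi)$ for some $\xi$ strictly between $\sigma_{-1}(p^{a-1})$ and $\sigma_{-1}(p^a)$. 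In particular $g_\kappa(p^a) < \sigma_{-1}(p^a)$.

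The key gain is at large primes. Expanding $(1+1/p)^\kappa$ to second order shows $g_\kappa(p) = 1 + \frac{1}{2p} + O_\kappa(p^{-2})$. Hence primes beyond a cutoff $y$ contribute only about $\prod (1 + 1/(2p))$, rather than the Mertens factor $\prod (1 - 1/p)^{-1}$.

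To turn this into an explicit bound, I split the primes of $n$ at a threshold $y$ chosen as a function of $\kappa_0$. For $p \le y$, I bound $g \le \sigma_{-1}(p^a) < (1-1/p)^{-1}$. For $p > y$, I use the explicit expansion with a certified error term. Since $\prod_{p \mid n, p>y} \le$ the product over the first $\omega(n)$ primes, and $\theta(p_{\omega(n)}) \le \log n$, explicit Rosser--Schoenfeld/Mertens bounds (in the form packaged by the thresholds $\MertensThreshold$, $\PrimorialLogThreshold$ and the constant $\MertensCorrection$ in the paper) give
\[
\rkrho{\kappa_0}(n) \le C(\kappa_0, y)\,(\log\log n)^{1/2 + o(1)}.
\]
This is eventually below $\RobinScale{n}$, with an explicit $N_{\kappa_0}$.

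For $\kappa_0 = \CertifiedKappa$, the resulting $N_{\kappa_0}$ is far too large to check directly, so I will organize the remaining range as an extremal problem. Set $F(n) = (\kappa_0 - 1)\log \rkrho{\kappa_0}(n)$, which is additive. A violation for $n$ in a range $[X, X']$ forces $F(n) \ge (\kappa_0-1)\log(\RobinScale{X})$. Maximizing an additive function subject to $\log n \le \log X'$ is a knapsack-type problem whose maximizers, as for colossally abundant numbers, lie on a parametrized family: for each slope $\epsilon$, choose exponents maximizing $F(p^a) - \epsilon a\log p$. I expect the certified constants $\CertifiedEndpointEvents$, $\CertifiedFirstEndpoint$, and $\CertifiedLastBadPrime$ to arise exactly as the breakpoints of this family and the last prime where the local comparison fails.

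The computation then has three parts:
\begin{itemize}
\item For $n$ up to a modest bound (around $\CertifiedDirectMinimumN$, or through $\CertifiedInitialChecks$ checks), verify the inequality directly.
\item For each interval between consecutive endpoints, bound $F$ by its value at the extremal candidate plus linear interpolation, and compare with the Robin scale at the left endpoint. Every quantity is computed with outward-rounded interval arithmetic, for example $\theta(\CertifiedThetaArgument)$ enclosed between $\CertifiedThetaLower$ and $\CertifiedThetaUpper$.
\item Beyond the prime $\CertifiedTailPrime$, or the cutoff $\CertifiedAnalyticCutoff$, invoke the analytic bound above.
\end{itemize}

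I expect the main obstacle to be this finite certification. The analytic tail must be made to kick in early enough, which requires a careful uniform second-order estimate of $g_{\kappa_0}(p)$ for $p$ small relative to $\kappa_0$, where the expansion is poor. My first attempt would bound $g_{\kappa_0}(p)$ by the exact power-mean value for $p \le \CertifiedLastBadPrime$ by direct computation, and use the $1 + 1/(2p)$ asymptotic only beyond it. The endpoint sweep must also be organized so that its margins, such as $\CertifiedEndpointMargin$, stay positive throughout.
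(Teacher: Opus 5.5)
Your reduction to the single parameter $\kappa = \kappa_0$ via \Cref{lem:power-mean} matches the paper. However, your analytic bound $\rkrho{\kappa_0}(n) \le C(\kappa_0, y)(\log\log n)^{1/2 + o(1)}$ is false, and both halves of your argument rest on it.

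The expansion $g_\kappa(p) = 1 + \frac{1}{2p} + O_\kappa(p^{-2})$ describes only primes with $p \parallel n$. If $p^a \parallel n$ with $a \ge 2$, your own mean-value observation gives $g_\kappa(p^a) > \sigma_{-1}(p^{a - 1}) \ge 1 + 1/p$. (That observation should read $g_\kappa(p^a) = \xi$, not $\sigma_{-1}(\xi)$.) So large primes dividing $n$ to a repeated exponent contribute a full Mertens-type factor, and your comparison with the first $\omega(n)$ primes silently assumes squarefree support above $y$.

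For instance, $n = \primorial{x}^2$ gives $\rkrho{\kappa_0}(n) > \prod_{p \le x}(1 + 1/p) \sim \frac{6}{\pi^2}e^\gamma\log x$, while $\log\log n \sim \log x$. More decisively, \Cref{thm:gronwall} shows that $\rkrho{\kappa}(n) / (\RobinScale{n})$ has limit superior exactly $1$ for every fixed $\kappa > 1$. There is no power saving in $\log\log n$, and the eventual inequality is a borderline phenomenon. The ``analytic bound'' you invoke for $P$ beyond $\CertifiedTailPrime$ in the certified part therefore collapses as well.

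The missing idea is that integers with near-extremal $\rkrho{\kappa_0}$ must pay for their prime support with repeated exponents, which inflates $\log n$ by a constant factor. The paper makes this precise by maximizing $Q_{\kappa_0}(n)n^{-\epsilon}$. At a canonical endpoint $N$ with largest prime factor $P$, beating exponent zero at $P$ forces $\epsilon < 1/((P - 1)\log P)$. Your $\frac{1}{2p}$ computation, sharpened to $\log(Q_{\kappa_0}(p^2)/Q_{\kappa_0}(p)) \ge 1/(3p)$ for $p \ge A_{\kappa_0}$, shows that exponent one at $p$ would require $\epsilon \ge 1/(3p\log p)$. Hence every prime in $[A_{\kappa_0}, (P - 1)/3]$ divides $N$ at least twice. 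This gives $\log N \ge \theta(P) + \theta((P - 1)/3) - \theta(A_{\kappa_0})$, roughly $\frac{4}{3}P$. Meanwhile $Q_{\kappa_0}(N) < \MertensProduct{P} < e^\gamma(\log P + 0.2/(\log P)^2)$, and the constant-factor gain in $\log N$ beats the Mertens error once $P > X_{\kappa_0}$ (\Cref{lem:effective-repeated-support,prop:bounded-parameter-tail}).

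Convexity of $t \mapsto \epsilon t - \log\log t$ (\Cref{lem:endpoint-principle}) then transfers the inequality from consecutive endpoints to every $n$ between them. The first endpoint containing $q_{\kappa_0}$ supplies the explicit cutoff.

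A pointwise repair of your split is possible: bound $\log\rkrho{\kappa_0}(n) - \epsilon\log n$ prime by prime, allowing each exponent to be $0$, $1$, or at least $2$. But that Lagrangian trade-off is precisely the paper's extremal argument, and it is absent from your sketch.

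Correspondingly, at $\kappa_0 = \CertifiedKappa$:
\begin{itemize}
\item The tail $P \ge \CertifiedTailPrime$ comes from the repeated-support comparison, analytically for $P \ge \CertifiedAnalyticCutoff$ and prime by prime below that.
\item The convexity lemma means only the endpoints themselves need checking, rather than comparing with the Robin scale at a left endpoint.
\item The direct scan covers $5041 \le n < \CertifiedFirstEndpoint$; $\CertifiedDirectMinimumN$ is merely where the smallest direct margin occurs.
\end{itemize}
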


At $\kappa = \CertifiedKappa$, directed interval arithmetic verifies the finite
normalized endpoint prefix left by \Cref{thm:bounded-eventual} and the integers
preceding its first endpoint.  Certification and reproducibility details appear at the end of the paper.

The proof of \Cref{thm:intro-order-theory} begins with
\eqref{eq:rksigma-at-prime-powers}, which turns the local analysis into a
first-difference problem for $t^\kappa$.  The Dirichlet series of
$\rksigma{\kappa}$ then places its summatory function within the
Selberg--Delange method.

For \Cref{thm:intro-robin-criterion}, \eqref{eq:local-power-mean-representation} gives strict
parameter monotonicity and the limit as $\kappa \to \infty$.  To prove
\Cref{thm:intro-unconditional}, we instead extremize
$\rkrho{\kappa_0}(n)n^{-\epsilon}$ directly and transfer the resulting
endpoint inequalities between consecutive canonical endpoints.  The body first
proves the effective eventual theorem for each fixed $\kappa_0$, then certifies
the remaining finite prefix at $\kappa_0 = \CertifiedKappa$.

\subsection*{Prior work}

The LCM-power family of \cite{Fan-Mits-Molnar} is defined by
\begin{equation}
\ksigma{\kappa}(n) \coloneqq \sum_{d \mid n}\mu(n / d)\sigma(d)^\kappa.
\end{equation}
Its mean value is studied in \cite[Theorem~1.3]{Fan-Mits-Molnar}, and
\cite[Theorem~1.5]{Fan-Mits-Molnar} proves that, for each fixed real
$\kappa > 3 / 2$, the corresponding Robin-type inequality for all sufficiently
large integers is equivalent to the Riemann hypothesis.  The case $\kappa = 2$
also produces the reciprocal average appearing in
\cite[equation~(17)]{Fan-Mits-Molnar}.  The classical LCM convolution and the
exact $k$-fold convolute are reviewed in \cite[\S\S4.4 and~6]{TothSurvey}.

A related literature studies multivariable averages of functions of the least
common multiple over boxes or hyperbolic regions; see
\cite{EssouabriSalinasToth,HeymanToth,HilberdinkLucaToth,HilberdinkToth}.
For $k \ge 3$, Kim studies the reciprocal box sum
\[
S_k(x) = \sum_{d_1, \ldots, d_k \le x}\frac1{[d_1, \ldots, d_k]},
\]
and proves that its polynomial main term has degree $2^k - 1$
\cite[Theorem~1.1]{KimReciprocalLCM}.

Extremal sequences adapted to Gr\"onwall's quotient have also been studied in
the classical setting.  In particular, Caveney, Nicolas, and Sondow introduce
and study the GA numbers in \cite{CaveneyNicolasSondow}.

\section{Order theory}\label{sec:order-theory}

The local problem becomes a first-difference estimate for $t^\kappa$ through
\eqref{eq:rksigma-at-prime-powers}.  The resulting bounds multiply over the prime powers
dividing $n$ and lead successively to the lower, average, and maximal
orders of the reciprocal family.

\subsection{Local bounds and lower order}

By \eqref{eq:rksigma-at-prime-powers}, the local factor is a discrete derivative of $t^\kappa$.  The mean value theorem supplies the local estimates from which the lower order follows.

\begin{lemma}\label{lem:discrete-mvt}
For all $\kappa > 1$, each prime $p$, and each integer $a \geq 1$, we have
\begin{equation}\label{eq:discrete-mvt}
\frac{\kappa}{p^a}\, \sigma_{-1}(p^{a - 1})^{\kappa - 1} \; \leq \; \rksigma{\kappa}(p^a) \; \leq \; \frac{\kappa}{p^a}\, \sigma_{-1}(p^a)^{\kappa - 1}.
\end{equation}
\end{lemma}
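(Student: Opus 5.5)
We apply the mean value theorem to $f(t) = t^\kappa$ on the interval whose endpoints are the two values appearing in \eqref{eq:rksigma-at-prime-powers}. Set $u \coloneqq \sigma_{-1}(p^{a-1})$ and $v \coloneqq \sigma_{-1}(p^a)$. Since
\[
\sigma_{-1}(p^a) = \sum_{j=0}^{a} p^{-j} = \sigma_{-1}(p^{a-1}) + p^{-a},
\]
we have $v - u = p^{-a} > 0$, and both $u$ and $v$ are at least $1$. The first step is therefore to record this identity. It identifies the length of the interval exactly with the factor $p^{-a}$ appearing in \eqref{eq:discrete-mvt}.

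The second step applies the mean value theorem. It produces some $\xi \in (u, v)$ with
\[
\rksigma{\kappa}(p^a) = v^\kappa - u^\kappa = \kappa\,\xi^{\kappa-1}(v-u) = \frac{\kappa}{p^a}\,\xi^{\kappa-1}.
\]
Because $\kappa > 1$, the map $t \mapsto t^{\kappa-1}$ is increasing on $(0,\infty)$, so $u^{\kappa-1} \le \xi^{\kappa-1} \le v^{\kappa-1}$. Substituting these bounds yields both inequalities in \eqref{eq:discrete-mvt} simultaneously; in fact both are strict.

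An alternative to the mean value theorem is to write the difference as an integral,
\[
v^\kappa - u^\kappa = \int_u^v \kappa t^{\kappa-1}\,dt,
\]
and bound the integrand between its values at the endpoints. Either route works, and neither step presents a real obstacle. The only points to watch are the bookkeeping identity $v - u = p^{-a}$, which holds including the case $a = 1$ where $u = \sigma_{-1}(1) = 1$, and the use of $\kappa > 1$, which makes the exponent $\kappa - 1$ positive so that the monotonicity runs in the stated direction.
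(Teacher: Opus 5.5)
Your proof is correct and matches the paper's argument: apply the mean value theorem to $t^\kappa$ on $[\sigma_{-1}(p^{a-1}), \sigma_{-1}(p^a)]$, note that this interval has length exactly $p^{-a}$, and bound $\xi^{\kappa-1}$ by its values at the endpoints. Your added remark that both inequalities are in fact strict is also right, since $\xi$ lies in the open interval and $t^{\kappa-1}$ is strictly increasing, though the paper needs only the non-strict form.
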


\begin{proof}
The function $t \mapsto t^\kappa$ is convex on $[0, \infty)$, and its derivative $\kappa t^{\kappa - 1}$ is non-decreasing.  We apply the mean value theorem on the interval $[\sigma_{-1}(p^{a - 1}), \sigma_{-1}(p^a)]$.  There is a number $\xi$ in this interval such that
\[
\sigma_{-1}(p^a)^\kappa - \sigma_{-1}(p^{a - 1})^\kappa \; = \; \kappa\, \xi^{\kappa - 1}\, \bigl(\sigma_{-1}(p^a) - \sigma_{-1}(p^{a - 1})\bigr).
\]
The difference factor on the right is $p^{-a}$.  Bounding $\xi$ by the endpoints of $[\sigma_{-1}(p^{a - 1}), \sigma_{-1}(p^a)]$ now gives \eqref{eq:discrete-mvt}.
\end{proof}

Multiplying the upper bound in \eqref{eq:discrete-mvt} over all prime power divisors of $n$ gives a clean global upper bound.

\begin{corollary}\label{cor:global-upper-bound}
For each integer $n \geq 1$, we have
\begin{equation}\label{eq:global-upper-bound}
\rksigma{\kappa}(n) \; \leq \; \frac{\kappa^{\omega(n)}}{n}\, \sigma_{-1}(n)^{\kappa - 1}.
\end{equation}
\end{corollary}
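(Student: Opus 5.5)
The bound follows by multiplying the upper estimate of \Cref{lem:discrete-mvt} over the prime powers exactly dividing $n$. Since $\rksigma{\kappa}$ is multiplicative, it is enough to control its values at prime powers.

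First, the case $n = 1$: the only divisor is $d = 1$, so $\rksigma{\kappa}(1) = \sigma_{-1}(1)^\kappa = 1$. The right-hand side of \eqref{eq:global-upper-bound} is $\kappa^0 \cdot 1^{-1} \cdot 1^{\kappa - 1} = 1$, so equality holds.

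For $n > 1$, write $n = \prod_{i=1}^{r} p_i^{a_i}$ with distinct primes $p_i$ and exponents $a_i \ge 1$, so that $r = \omega(n)$. Multiplicativity gives
\[
\rksigma{\kappa}(n) = \prod_{i=1}^{r}\rksigma{\kappa}(p_i^{a_i}).
\]
By \eqref{eq:discrete-mvt}, each factor satisfies
\[
0 \le \frac{\kappa}{p_i^{a_i}}\sigma_{-1}(p_i^{a_i - 1})^{\kappa-1} \le \rksigma{\kappa}(p_i^{a_i}) \le \frac{\kappa}{p_i^{a_i}}\sigma_{-1}(p_i^{a_i})^{\kappa-1}.
\]
The lower bound shows every factor is nonnegative, so the upper bounds can be multiplied termwise without any sign issue.

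Multiplying the $r$ upper bounds produces three products:
\begin{itemize}
\item $\prod_i \kappa = \kappa^{\omega(n)}$;
\item $\prod_i p_i^{-a_i} = n^{-1}$;
\item $\prod_i \sigma_{-1}(p_i^{a_i})^{\kappa-1} = \sigma_{-1}(n)^{\kappa-1}$, using the multiplicativity of $\sigma_{-1}$ and the fact that $x \mapsto x^{\kappa-1}$ respects products of positive reals.
\end{itemize}
Together these give exactly \eqref{eq:global-upper-bound}.

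The only point needing care is the nonnegativity of the factors, which makes the termwise multiplication legitimate; the lower bound in \eqref{eq:discrete-mvt} supplies it.
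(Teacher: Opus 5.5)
Your proof is correct and follows the paper's argument: you treat $n = 1$ directly, then multiply the upper bound of \Cref{lem:discrete-mvt} over the prime powers $p^a \parallel n$ to get the three products $\kappa^{\omega(n)}$, $n^{-1}$, and $\sigma_{-1}(n)^{\kappa - 1}$. Your remark that the lower bound in \eqref{eq:discrete-mvt} makes every local factor positive, so the inequalities can be multiplied termwise, is a justification the paper leaves implicit.
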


\begin{proof}
For $n = 1$, the assertion is the identity $\rksigma{\kappa}(1) = 1$.  For $n > 1$, we multiply the right-hand side of \eqref{eq:discrete-mvt} over $p^a \parallel n$.  The three products are $\kappa^{\omega(n)}$, $n^{-1}$, and $\sigma_{-1}(n)^{\kappa - 1}$, respectively.
\end{proof}

A symmetric argument using the lower bound in \eqref{eq:discrete-mvt} produces a global lower bound with $\sigma_{-1}(p^{a - 1})$ in place of $\sigma_{-1}(p^a)$.

\begin{corollary}\label{cor:global-lower-bound}
For each integer $n > 1$, we have
\begin{equation}\label{eq:global-lower-bound}
\rksigma{\kappa}(n) \; \geq \; \frac{\kappa^{\omega(n)}}{n} \prod_{p^a \parallel n} \sigma_{-1}(p^{a - 1})^{\kappa - 1}.
\end{equation}
\end{corollary}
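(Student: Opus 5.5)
The plan is to mirror the proof of \Cref{cor:global-upper-bound}, this time using the left-hand inequality of \Cref{lem:discrete-mvt}. First we use multiplicativity of $\rksigma{\kappa}$, recorded after \Cref{def:reciprocal-lcm-power}. Writing $n = \prod_{p^a \parallel n} p^a$ with each $a \ge 1$, we get
\[
\rksigma{\kappa}(n) = \prod_{p^a \parallel n} \rksigma{\kappa}(p^a).
\]
The restriction $n > 1$ ensures that the product is nonempty. Even for $n = 1$ the empty product would formally give equality, but the statement excludes that case.

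Next, for each $p^a \parallel n$, \Cref{lem:discrete-mvt} gives
\[
\rksigma{\kappa}(p^a) \ge \frac{\kappa}{p^a}\,\sigma_{-1}(p^{a-1})^{\kappa-1}.
\]
The right-hand side is strictly positive, since $\sigma_{-1}(p^{a-1}) \ge 1$ (it equals $1$ when $a = 1$). Because every factor on both sides is nonnegative, we may multiply these inequalities over the finitely many primes dividing $n$. This positivity check is the only point needing care, and it is immediate.

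Finally, we evaluate the product of the lower bounds factor by factor. The factors $\kappa$ contribute $\kappa^{\omega(n)}$. The factors $p^{-a}$ contribute $\prod p^{-a} = n^{-1}$. The remaining factors contribute $\prod_{p^a \parallel n}\sigma_{-1}(p^{a-1})^{\kappa-1}$, which we leave as a product. Unlike the upper bound, this product does not collapse to a power of $\sigma_{-1}$ of a single integer, although it equals $\sigma_{-1}(n/\operatorname{rad} n)^{\kappa-1}$ by multiplicativity of $\sigma_{-1}$. This yields \eqref{eq:global-lower-bound}. There is no genuine obstacle.
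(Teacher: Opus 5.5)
Your proposal is correct and matches the paper's proof: multiply the lower bound of \Cref{lem:discrete-mvt} over $p^a \parallel n$ and read off the factors $\kappa^{\omega(n)}$, $n^{-1}$, and $\prod_{p^a \parallel n}\sigma_{-1}(p^{a-1})^{\kappa-1}$. One small wording point: the product does collapse, to $\sigma_{-1}\bigl(\prod_{p^a \parallel n} p^{a-1}\bigr)^{\kappa-1}$, exactly as your own identity shows, so only the phrase saying it ``does not collapse'' is misstated.
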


\begin{proof}
We multiply the left-hand side of \eqref{eq:discrete-mvt} over the
prime powers $p^a \parallel n$.  The factors $p^{-a}$ multiply to
$n^{-1}$, and the remaining local factors give \eqref{eq:global-lower-bound}.
\end{proof}

The two global inequalities meet asymptotically along the primes.  This
determines the lower order.

\begin{theorem}[{\Cref{thm:intro-order-theory}}]\label{thm:lower-order}
For every real $\kappa > 1$, we have
\[
\liminf_{n \to \infty}
\frac{n\,\rksigma{\kappa}(n)}{\kappa^{\omega(n)}} = 1.
\]
Equivalently, the lower order of $\rkrho{\kappa}(n)$ is $1$.
\end{theorem}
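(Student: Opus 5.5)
The plan is to squeeze the quotient $n\,\rksigma{\kappa}(n)/\kappa^{\omega(n)}$ from below by $1$ for every $n>1$ using \Cref{cor:global-lower-bound}, and then exhibit a sequence along which it tends to $1$ using \Cref{cor:global-upper-bound}.

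For the lower bound, \Cref{cor:global-lower-bound} gives, for each $n>1$,
\[
\frac{n\,\rksigma{\kappa}(n)}{\kappa^{\omega(n)}} \ge \prod_{p^a \parallel n}\sigma_{-1}(p^{a-1})^{\kappa-1} \ge 1,
\]
because $\sigma_{-1}(p^{a-1}) \ge 1$ and $\kappa - 1 > 0$. For $n=1$ the quotient is exactly $1$. Hence the quotient is at least $1$ for every $n$, and the $\liminf$ is at least $1$.

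For the matching upper bound, I take $n = p$ prime. Then $\omega(p)=1$, and the exact formula \eqref{eq:rksigma-at-prime-powers} gives
\[
\frac{p\,\rksigma{\kappa}(p)}{\kappa} = \frac{p\bigl((1+p^{-1})^\kappa - 1\bigr)}{\kappa}.
\]
This tends to $1$ as $p\to\infty$, since $(1+h)^\kappa - 1 = \kappa h + O_\kappa(h^2)$. Alternatively, \Cref{cor:global-upper-bound} sandwiches the quotient between $1$ and $(1+p^{-1})^{\kappa-1}\to 1$. Therefore the $\liminf$ is at most $1$, and it equals $1$.

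The equivalent statement for $\rkrho{\kappa}$ follows by taking $(\kappa-1)$st roots, since $t\mapsto t^{1/(\kappa-1)}$ is continuous and increasing on $[1,\infty)$. Thus $\rkrho{\kappa}(n)\ge 1$ for all $n$, and $\rkrho{\kappa}(p)\to1$ along the primes. There is no real obstacle here; the only point needing care is confirming that the lower product is at least $1$, which holds because each local factor is at least $1$.
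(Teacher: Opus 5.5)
Your proof is correct and follows the paper's argument essentially verbatim: the bound $\ge 1$ comes from \Cref{cor:global-lower-bound}, and the exact prime values $p\,\rksigma{\kappa}(p)/\kappa = (p/\kappa)\bigl((1+p^{-1})^\kappa - 1\bigr) \to 1$ show the bound is attained in the limit. Your alternative sandwich via \Cref{cor:global-upper-bound} and your explicit handling of $n = 1$ and of the $(\kappa-1)$st-root reformulation are harmless additions that the paper leaves implicit.
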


\begin{proof}
By \Cref{cor:global-lower-bound}, we have
\[
\frac{n\,\rksigma{\kappa}(n)}{\kappa^{\omega(n)}}
\ge
\prod_{p^a \parallel n}\sigma_{-1}(p^{a - 1})^{\kappa - 1}
\ge 1.
\]
For a prime $p$, we have
\begin{equation}\label{eq:prime-lower-order-limit}
\frac{p\,\rksigma{\kappa}(p)}{\kappa}
=
\frac p\kappa\left(\left(1 + \frac1p\right)^\kappa - 1\right)
\longrightarrow 1
\qquad\text{as }p \to \infty.
\end{equation}
Combining \Cref{cor:global-lower-bound} with \eqref{eq:prime-lower-order-limit} proves \Cref{thm:lower-order}.
\end{proof}

Combining \eqref{eq:robin-unconditional-bound} with \eqref{eq:global-upper-bound} gives a pointwise bound for the reciprocal family.

\begin{corollary}\label{cor:robin-derived-upper-bound}
For every real $\kappa > 1$ and each integer $n \geq 3$, we have
\[
\rksigma{\kappa}(n)
< \frac{\kappa^{\omega(n)}}n
\left(\RobinScale{n} + \frac{0.6483}{\log\log n}\right)^{\kappa - 1}.
\]
\end{corollary}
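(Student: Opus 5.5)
The plan is to combine \Cref{cor:global-upper-bound} with the unconditional half of Robin's theorem, \eqref{eq:robin-unconditional-bound}, and use the monotonicity of $t \mapsto t^{\kappa - 1}$ on $[0, \infty)$.

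Fix a real $\kappa > 1$ and an integer $n \ge 3$. By \eqref{eq:global-upper-bound} we have
\[
\rksigma{\kappa}(n) \le \frac{\kappa^{\omega(n)}}{n}\,\sigma_{-1}(n)^{\kappa - 1}.
\]
Since $n \ge 3$, we have $\log\log n > 0$, so the quantity $\RobinScale{n} + 0.6483/\log\log n$ is well defined and positive. Theorem~\ref{thm:classical-robin} gives the strict inequality
\[
0 < \sigma_{-1}(n) < \RobinScale{n} + \frac{0.6483}{\log\log n}.
\]

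The exponent $\kappa - 1$ is strictly positive, so $t \mapsto t^{\kappa - 1}$ is strictly increasing on $(0, \infty)$. The strict inequality therefore survives raising both sides to the power $\kappa - 1$. Multiplying by the positive factor $\kappa^{\omega(n)}/n$ keeps it strict, and chaining with the weak inequality from \Cref{cor:global-upper-bound} gives the claim.

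There is no real obstacle here. The only points to check are that $n \ge 3$ makes $\log\log n$ positive, and that strictness passes through the chain: it comes from Robin's bound and is preserved by strictly increasing maps and positive scalars.
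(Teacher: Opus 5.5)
Your proposal is correct and is the paper's proof: combine \Cref{cor:global-upper-bound} with the unconditional bound \eqref{eq:robin-unconditional-bound}, then raise the strict inequality to the positive power $\kappa - 1$ and multiply by $\kappa^{\omega(n)}/n$. Your checks that $\log\log n > 0$ for $n \ge 3$ and that strictness survives the chain are the details the paper leaves implicit.
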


\begin{proof}
Combine \Cref{cor:global-upper-bound} with \eqref{eq:robin-unconditional-bound}.
\end{proof}

\subsection{Average order and reciprocal mean}

We next determine the average order of $\rksigma{\kappa}(n)$.  We put
\[
b_\kappa(n) \coloneqq n\,\rksigma{\kappa}(n).
\]
The Dirichlet series of $b_\kappa$ has singular factor
$\zeta(s)^\kappa$ at $s = 1$.  Selberg--Delange analysis therefore gives
\eqref{eq:weighted-reciprocal-full-expansion}.  Abel summation transfers \eqref{eq:weighted-reciprocal-full-expansion}
to \eqref{eq:reciprocal-full-expansion}, and the shifted Dirichlet
series determines the integration constant.  We then evaluate the series
at $s = 1$ to recover the convergent reciprocal mean used in the earlier
paper.

For $\kappa = 2$, a prior paper proved \cite[equation~(17)]{Fan-Mits-Molnar} that
\begin{equation}\label{eq:fan-mits-molnar-kappa-two-average}
\sum_{n \le x}\rksigma{2}(n)
= \frac12(\log x)^2 + O(\log x)
\end{equation}
as $x \to \infty$.
Our argument immediately below extends \eqref{eq:fan-mits-molnar-kappa-two-average} to every real $\kappa > 1$ and supplies all lower-order
terms.  \Cref{lem:average-order-euler-product} follows directly from \Cref{lem:discrete-mvt}.

\begin{lemma}\label{lem:average-order-euler-product}
For every real $\kappa > 1$, the Dirichlet series
\[
\mathcal B_\kappa(s)
\coloneqq
\sum_{n \ge 1}\frac{b_\kappa(n)}{n^s}
\]
converges absolutely for $\operatorname{Re}(s) > 1$.  There is a function $H_\kappa$ holomorphic on $\operatorname{Re}(s) > 1 / 2$.  It is bounded on every closed half-plane
$\operatorname{Re}(s) \geq 1 / 2 + \delta$, where $\delta > 0$, and satisfies
\begin{equation}\label{eq:average-order-factorization}
\mathcal B_\kappa(s) = \zeta(s)^\kappa H_\kappa(s)
\qquad\text{for }\operatorname{Re}(s) > 1,
\end{equation}
and
\begin{equation}\label{eq:h-at-one}
H_\kappa(1) = 1.
\end{equation}
Equivalently, for $\operatorname{Re}(s) > 0$, we have
\begin{equation}
\mathcal R_\kappa(s) = \zeta(s + 1)^\kappa H_\kappa(s + 1),
\end{equation}
where $H_\kappa(s + 1)$ is holomorphic on $\operatorname{Re}(s) > -1 / 2$.
\end{lemma}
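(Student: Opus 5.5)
The plan is to work with Euler products and compare each local factor with that of $\zeta(s)^\kappa$. First, $b_\kappa$ is multiplicative, since $\rksigma{\kappa}$ is. By \Cref{lem:discrete-mvt}, for each prime power,
\[
0 \le b_\kappa(p^a) = p^a\rksigma{\kappa}(p^a) \le \kappa\,\sigma_{-1}(p^a)^{\kappa-1} \le \kappa\,(1 - p^{-1})^{-(\kappa-1)} \le \kappa\, 2^{\kappa-1}.
\]
So $b_\kappa(n) \le C_\kappa^{\omega(n)} \ll_{\kappa,\epsilon} n^\epsilon$. This gives absolute convergence of $\mathcal B_\kappa(s)$ for $\operatorname{Re}(s) > 1$, together with the Euler product
\[
\mathcal B_\kappa(s) = \prod_p \Bigl(1 + \sum_{a \ge 1} b_\kappa(p^a)p^{-as}\Bigr).
\]

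Next, define $H_\kappa(s) \coloneqq \prod_p H_{\kappa,p}(s)$, where
\[
H_{\kappa,p}(s) \coloneqq (1 - p^{-s})^\kappa\Bigl(1 + \sum_{a\ge1} b_\kappa(p^a)p^{-as}\Bigr).
\]
The key local computation is at $a = 1$:
\[
b_\kappa(p) = p\bigl((1 + p^{-1})^\kappa - 1\bigr) = \kappa + O_\kappa(p^{-1}).
\]
Expanding $(1 - p^{-s})^\kappa = 1 - \kappa p^{-s} + O_\kappa(p^{-2\sigma})$ then gives
\[
H_{\kappa,p}(s) = 1 + \bigl(b_\kappa(p) - \kappa\bigr)p^{-s} + O_\kappa\Bigl(\sum_{a \ge 2} p^{-a\sigma}\Bigr),
\qquad \sigma = \operatorname{Re}(s).
\]
The first correction is $O_\kappa(p^{-1-\sigma})$. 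The remaining terms are $O_\kappa(p^{-2\sigma})$ uniformly for $\sigma \ge 1/2 + \delta$, because $b_\kappa(p^a)$ is bounded. Hence $\sum_p |H_{\kappa,p}(s) - 1| \ll_{\kappa,\delta} \sum_p p^{-1-2\delta} < \infty$ uniformly on $\operatorname{Re}(s) \ge 1/2+\delta$.

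The product therefore converges absolutely and locally uniformly. It defines a holomorphic function on $\operatorname{Re}(s) > 1/2$, bounded on each closed half-plane $\operatorname{Re}(s) \ge 1/2 + \delta$, by the bound $\prod_p(1 + |H_{\kappa,p} - 1|) \le \exp(\sum_p |H_{\kappa,p}-1|)$. For $\operatorname{Re}(s) > 1$, multiplying by $\zeta(s)^\kappa = \prod_p (1-p^{-s})^{-\kappa}$ gives \eqref{eq:average-order-factorization}, using the principal branch.

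The identity \eqref{eq:h-at-one} is the step I expect to need the cleanest argument, and I would prove it exactly at the level of local factors. At $s = 1$ the local series telescopes:
\[
1 + \sum_{a\ge1}\bigl(\sigma_{-1}(p^a)^\kappa - \sigma_{-1}(p^{a-1})^\kappa\bigr) = \lim_{a\to\infty}\sigma_{-1}(p^a)^\kappa = (1 - p^{-1})^{-\kappa},
\]
by \eqref{eq:rksigma-at-prime-powers}, since $b_\kappa(p^a)p^{-a} = \rksigma{\kappa}(p^a)$. Hence $H_{\kappa,p}(1) = 1$ for every $p$, and $H_\kappa(1) = 1$. Here $s = 1$ lies in the region where the product converges absolutely, so evaluating factorwise is legitimate.

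Finally, for the shifted statement I take $\mathcal R_\kappa(s) \coloneqq \sum_n \rksigma{\kappa}(n) n^{-s} = \mathcal B_\kappa(s+1)$, which is presumably the intended definition. Substituting $s \mapsto s+1$ in the factorization gives the claimed identity for $\operatorname{Re}(s) > 0$, with $H_\kappa(s+1)$ holomorphic on $\operatorname{Re}(s) > -1/2$.

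The only delicate point is the uniformity of the $O(p^{-2\sigma})$ bound near $\sigma = 1/2$. It is handled by the uniform boundedness of $b_\kappa(p^a)$ and by summing the geometric series in $a$ with $p^{-\sigma} \le 2^{-1/2}$.
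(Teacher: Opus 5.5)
Your proposal is correct and follows essentially the same route as the paper. You bound $b_\kappa(p^a)$ uniformly via \Cref{lem:discrete-mvt}, use $b_\kappa(p) = \kappa + O_\kappa(p^{-1})$ to get $H_{\kappa,p}(s) = 1 + O(p^{-\operatorname{Re}(s)-1} + p^{-2\operatorname{Re}(s)})$ uniformly on $\operatorname{Re}(s) \ge 1/2 + \delta$, and obtain $H_\kappa(1) = 1$ by telescoping each local factor at $s = 1$; your explicit handling of the shift $\mathcal R_\kappa(s) = \mathcal B_\kappa(s+1)$ and of boundedness via $\exp\bigl(\sum_p |H_{\kappa,p}(s) - 1|\bigr)$ only spells out details the paper leaves implicit.
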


\begin{proof}
The function $b_\kappa$ is multiplicative.  By \Cref{lem:discrete-mvt}, we have
\begin{equation}\label{eq:b-prime-power-bound}
0 < b_\kappa(p^a)
= p^a\rksigma{\kappa}(p^a)
\leq
\kappa\left(\frac p{p - 1}\right)^{\!\kappa - 1}
\leq C_\kappa
\end{equation}
for each prime $p$ and every $a \ge 1$.  Moreover, Taylor expansion at $1$ gives
\begin{equation}\label{eq:b-at-prime-expansion}
b_\kappa(p)
= p\left(\left(1 + \frac1p\right)^\kappa - 1\right)
= \kappa + O_\kappa(p^{-1})
\end{equation}
as $p \to \infty$ through the primes.
Thus the Euler product for $\mathcal B_\kappa(s)$ converges absolutely when $\operatorname{Re}(s) > 1$.

We write
\[
\mathcal B_{\kappa, p}(s)
\coloneqq
1 + \sum_{a \ge 1}\frac{b_\kappa(p^a)}{p^{as}}.
\]
Combining \eqref{eq:b-prime-power-bound} and \eqref{eq:b-at-prime-expansion}, as $p \to \infty$ through the primes and uniformly on every closed half-plane $\operatorname{Re}(s) \ge 1 / 2 + \delta$ with $\delta > 0$, gives
\begin{equation}\label{eq:local-b-expansion}
\mathcal B_{\kappa, p}(s)
= 1 + \kappa p^{-s}
+ O_{\kappa, \delta}\!\left(p^{-\operatorname{Re}(s) - 1} + p^{-2\operatorname{Re}(s)}\right).
\end{equation}
We define
\[
H_{\kappa, p}(s)
\coloneqq
(1 - p^{-s})^\kappa\mathcal B_{\kappa, p}(s),
\]
where the power is defined by the analytic logarithm on $|p^{-s}| < 1$.  Expanding the first factor and using \eqref{eq:local-b-expansion}, we obtain, as $p \to \infty$ through the primes and uniformly on the same half-plane,
\[
H_{\kappa, p}(s)
= 1 + O_{\kappa, \delta}\!\left(p^{-\operatorname{Re}(s) - 1} + p^{-2\operatorname{Re}(s)}\right).
\]
The error terms are summable over the primes uniformly on every closed half-plane
$\operatorname{Re}(s) \geq 1 / 2 + \delta$.  Hence
\[
H_\kappa(s) \coloneqq \prod_p H_{\kappa, p}(s)
\]
converges normally there and defines a holomorphic function.  The Euler products give \eqref{eq:average-order-factorization} in the half-plane of absolute convergence.

At $s = 1$, telescoping yields
\[
\mathcal B_{\kappa, p}(1)
= 1 + \sum_{a \ge 1}\rksigma{\kappa}(p^a)
= \lim_{a \to \infty}\sigma_{-1}(p^a)^\kappa
= (1 - p^{-1})^{-\kappa}.
\]
Thus $H_{\kappa, p}(1) = 1$ for each prime $p$, and \eqref{eq:h-at-one} follows.
\end{proof}

We use the following consequence of Tenenbaum's Selberg--Delange theorem
\cite[Theorem~II.5.2]{Tenenbaum}.

\begin{proposition}\label{prop:tenenbaum-selberg-delange}
Let $z > 0$, and let
\[
F(s) = \sum_{n \geq 1}\frac{a_n}{n^s}
\]
have nonnegative coefficients.  Suppose that
\[
F(s) = \zeta(s)^zG(s),
\]
where $G$ is holomorphic and bounded on a half-plane
$\operatorname{Re}(s) \geq 1 - c$ for some $c > 0$.  If
\begin{equation}\label{eq:tenenbaum-coefficients}
\frac{s^zF(1 + s)}{1 + s} = \sum_{j \geq 0}\mu_j(z)s^j,
\qquad
\lambda_j(z) = \frac{\mu_j(z)}{\Gamma(z - j)},
\end{equation}
then, for each fixed integer $N \geq 0$,
\begin{equation}\label{eq:tenenbaum-selberg-delange}
\sum_{n \leq x}a_n
= x(\log x)^{z - 1}
\left\{
\sum_{0 \leq j \leq N}\frac{\lambda_j(z)}{(\log x)^j}
+ O_{z, N, G}\!\left((\log x)^{-N - 1}\right)
\right\}
\end{equation}
as $x \to \infty$.
\end{proposition}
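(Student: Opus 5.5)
This is a specialization of \cite[Theorem~II.5.2]{Tenenbaum}, so the plan is to check its hypotheses and then read off its conclusion in the normalization of \eqref{eq:tenenbaum-selberg-delange}. Tenenbaum's theorem is stated for Dirichlet series $F(s) = \sum a_n n^{-s}$ that admit a factorization $F(s) = \zeta(s)^z G(s; z)$, where $G$ is holomorphic in a region of the shape $\sigma \geq 1 - c_0/(1 + \log^+|\tau|)$ and satisfies a polynomial growth bound $|G(s)| \le M(1+|\tau|)^{1-\delta}$ there. Our hypothesis is stronger: $G$ is holomorphic and bounded on the whole half-plane $\operatorname{Re}(s) \geq 1 - c$. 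This half-plane contains Tenenbaum's zero-free-region domain once $c_0$ is taken small in terms of $c$. Boundedness gives the growth condition with any $\delta \in (0,1]$ and a constant $M$ depending only on $G$. The coefficient hypothesis in Tenenbaum's class $\mathcal P(z; c_0, \delta, M)$ asks for $|a_n|$ controlled by a majorant of type $\zeta(s)^{w}$ with some $w$. Here I would use nonnegativity: $b_n = a_n$ is its own majorant, and $\sum a_n n^{-\sigma} = \zeta(\sigma)^z G(\sigma)$ for real $\sigma > 1$ gives the required bound $\ll (\sigma-1)^{-z}$. This is the variant (Theorem~II.5.2 with the Type $\mathcal T$ hypotheses) in which nonnegative coefficients suffice.

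Next I would translate Tenenbaum's coefficients into ours. Tenenbaum writes his conclusion as
\[
\sum_{n \le x} a_n = x(\log x)^{z-1}\Bigl\{\sum_{0\le j\le N}\frac{\lambda_j(z)}{(\log x)^j} + O\bigl(R_N(x)\bigr)\Bigr\},
\qquad
\lambda_j(z) = \frac{\mu_j(z)}{\Gamma(z-j)}.
\]
Here $\mu_j(z)$ are the Taylor coefficients at $s = 1$ of $G(s;z)\,\bigl((s-1)\zeta(s)\bigr)^z/s$. Substituting $s \mapsto 1+s$, this function equals $s^z F(1+s)/(1+s)$, since $(s\zeta(1+s))^z G(1+s) = s^z F(1+s)$. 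It is holomorphic near $s=0$ because $s\zeta(1+s)$ is holomorphic and nonvanishing there, with the principal branch fixed by value $1$ at $s=0$. So its expansion is exactly \eqref{eq:tenenbaum-coefficients}. When $z - j$ is a nonpositive integer, $1/\Gamma$ is interpreted as $0$, as in Tenenbaum.

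Finally, the error term. Tenenbaum's remainder is $R_N(x) = e^{-c_1\sqrt{\log x}} + \bigl(c_2 N/\log x\bigr)^{N+1}$. For fixed $N$ this is $O_N\bigl((\log x)^{-N-1}\bigr)$, with implied constants depending on $z$, $N$ and, through $c_0$ and $M$, on $G$. That gives \eqref{eq:tenenbaum-selberg-delange}.

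The only delicate point is confirming that bounded holomorphy on a full half-plane, plus nonnegative coefficients, places $F$ in Tenenbaum's admissible class with uniform constants. I would handle it by citing the precise class definition and noting that every constant is controlled by $c$ and $\sup|G|$.
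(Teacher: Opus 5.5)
Your proposal is correct and takes the paper's route. The paper gives no separate argument and simply presents the proposition as a consequence of \cite[Theorem~II.5.2]{Tenenbaum}; your checks (taking $c_0 \le c$, any $\delta \in (0,1]$, $M = \sup|G|$, majorant $b_n = a_n$ with $w = z$, and translating Tenenbaum's $\mu_j(z)$ via $s \mapsto 1 + s$) are exactly what that citation implicitly requires. One small correction: the type-$\mathcal{T}$ condition asks only that $\sum b_n n^{-s}$ itself have property $\mathcal{P}(w; c_0, \delta, M)$, so with $b_n = a_n$ and $w = z$ it is literally the hypothesis on $F$, and no estimate of the form $\ll (\sigma - 1)^{-z}$ is needed.
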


The factorization in \Cref{lem:average-order-euler-product} first gives \eqref{eq:weighted-reciprocal-full-expansion}.  The factor
$(1 + z)^{-1}$ in \eqref{eq:psi-kappa-definition} comes from the Perron kernel in the Selberg--Delange expansion.
Abel summation then transfers \eqref{eq:weighted-reciprocal-full-expansion} to \eqref{eq:reciprocal-full-expansion}, while
the relation between \eqref{eq:phi-kappa-definition} and \eqref{eq:psi-kappa-definition} accounts for the shift in
the gamma factors.  A Mellin comparison at $s = 0$ determines the one
integration constant that Abel summation does not see.

\begin{theorem}[{\Cref{thm:intro-order-theory}}]\label{thm:reciprocal-average-order}
For a fixed real number $\kappa > 1$, let
\begin{equation}\label{eq:phi-kappa-definition}
\Phi_\kappa(z)
\coloneqq
\bigl(z\zeta(1 + z)\bigr)^\kappa H_\kappa(1 + z)
= \sum_{j \ge 0}c_{\kappa, j}z^j
\end{equation}
for $z$ in a neighborhood of $0$, where the branch is chosen so that
$\bigl(z\zeta(1 + z)\bigr)^\kappa = 1$ at $z = 0$, and let
\begin{equation}\label{eq:psi-kappa-definition}
\Psi_\kappa(z)
\coloneqq
\frac{\Phi_\kappa(z)}{1 + z}
= \sum_{j \ge 0}d_{\kappa, j}z^j.
\end{equation}
Then, for each integer $J \ge 0$, as $x \to \infty$, we have
\begin{equation}\label{eq:weighted-reciprocal-full-expansion}
\sum_{n \le x} n\,\rksigma{\kappa}(n)
=
x\sum_{j = 0}^{J}
\frac{d_{\kappa, j}}{\Gamma(\kappa - j)}
(\log x)^{\kappa - 1 - j}
+ O_{\kappa, J}\!\left(x(\log x)^{\kappa - 2 - J}\right),
\end{equation}
and
\begin{equation}\label{eq:reciprocal-full-expansion}
\sum_{n \le x}\rksigma{\kappa}(n)
=
\sum_{j = 0}^{J}
\frac{c_{\kappa, j}}{\Gamma(\kappa + 1 - j)}
(\log x)^{\kappa - j}
+ O_{\kappa, J}\!\left((\log x)^{\kappa - 1 - J}\right).
\end{equation}
Here $1 / \Gamma$ is understood as its entire continuation, so a displayed term vanishes whenever its denominator is a pole of $\Gamma$.

In particular, as $x \to \infty$, we have
\begin{align}
\sum_{n \le x} n\,\rksigma{\kappa}(n)
& =
\frac{x(\log x)^{\kappa - 1}}{\Gamma(\kappa)}
+ O_\kappa\!\left(x(\log x)^{\kappa - 2}\right)
\\
\sum_{n \le x}\rksigma{\kappa}(n)
& =
\frac{(\log x)^\kappa}{\Gamma(\kappa + 1)}
+ O_\kappa\!\left((\log x)^{\kappa - 1}\right)
.
\end{align}
Moreover, we have
\begin{equation}\label{eq:first-average-coefficients}
c_{\kappa, 0} = d_{\kappa, 0} = 1,
\qquad
c_{\kappa, 1} = \kappa\gamma + H_\kappa'(1),
\qquad
 d_{\kappa, 1} = \kappa\gamma + H_\kappa'(1) - 1.
\end{equation}
Consequently, as $x \to \infty$, the first two terms of the unweighted expansion are
\begin{equation}\label{eq:reciprocal-first-two-terms}
\sum_{n \le x}\rksigma{\kappa}(n)
=
\frac{(\log x)^\kappa}{\Gamma(\kappa + 1)}
+
\frac{\kappa\gamma + H_\kappa'(1)}{\Gamma(\kappa)}
(\log x)^{\kappa - 1}
+ O_\kappa\!\left(1 + (\log x)^{\kappa - 2}\right).
\end{equation}
\end{theorem}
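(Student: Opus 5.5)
We apply \Cref{prop:tenenbaum-selberg-delange} to $F = \mathcal B_\kappa$ with $z = \kappa$. \Cref{lem:average-order-euler-product} supplies the factorization $\mathcal B_\kappa = \zeta^\kappa H_\kappa$, and $b_\kappa(n) \ge 0$ by \eqref{eq:b-prime-power-bound}. It also shows that $H_\kappa$ is holomorphic and bounded on $\operatorname{Re}(s) \ge 1 - c$ for any $c < 1/2$.

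To identify the Selberg--Delange coefficients, note that
\[
\frac{s^\kappa\mathcal B_\kappa(1+s)}{1+s} = \frac{(s\zeta(1+s))^\kappa H_\kappa(1+s)}{1+s} = \Psi_\kappa(s).
\]
Hence $\mu_j(\kappa) = d_{\kappa,j}$ and $\lambda_j = d_{\kappa,j}/\Gamma(\kappa - j)$, and \eqref{eq:tenenbaum-selberg-delange} with $N = J$ is exactly \eqref{eq:weighted-reciprocal-full-expansion}. The coefficients in \eqref{eq:first-average-coefficients} come from $z\zeta(1+z) = 1 + \gamma z + O(z^2)$ and $H_\kappa(1) = 1$. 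These give
\[
\Phi_\kappa(z) = 1 + (\kappa\gamma + H_\kappa'(1))z + O(z^2),
\]
and dividing by $1+z$ subtracts $1$ from the linear coefficient.

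For the unweighted sum, set $B(x) = \sum_{n \le x} b_\kappa(n)$ and use Abel summation:
\[
\sum_{n \le x}\rksigma{\kappa}(n) = \frac{B(x)}{x} + \int_1^x \frac{B(t)}{t^2}\,dt.
\]
The first term is $O((\log x)^{\kappa-1})$. In the integral, the main terms integrate to $d_{\kappa,j}(\log x)^{\kappa-j}/((\kappa-j)\Gamma(\kappa-j))$ plus constants, and the error integrates to $O((\log x)^{\kappa-1-J})$, or $O(\log\log x)$ in the borderline case. This is the main bookkeeping obstacle. The difficulty is that the Abel-summation output is expressed through the $d$'s and contains an unknown additive constant, while the claimed form uses $c_{\kappa,j}/\Gamma(\kappa+1-j)$.

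To bypass this, I would apply \Cref{prop:tenenbaum-selberg-delange} differently. Take $x \to$ replaced by a Perron/Selberg--Delange expansion for the coefficients $\rksigma{\kappa}(n) = b_\kappa(n)/n$, whose Dirichlet series is $\mathcal R_\kappa(s) = \zeta(s+1)^\kappa H_\kappa(s+1)$, with its singularity at $s = 0$. Equivalently, compare Mellin transforms. The Mellin transform
\[
\int_1^\infty S(x)\,x^{-s-1}\,dx = \frac{\mathcal R_\kappa(s)}{s},
\qquad S(x) = \sum_{n \le x}\rksigma{\kappa}(n),
\]
near $s = 0$ equals $s^{-\kappa-1}\Phi_\kappa(s)$. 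A term $(\log x)^{\kappa - j}$ has Mellin transform $\Gamma(\kappa+1-j)s^{-(\kappa+1-j)}$. So matching the singular expansion term by term forces the coefficient $c_{\kappa,j}/\Gamma(\kappa+1-j)$. This includes the integration constant: when $\kappa$ is an integer it is the $j = \kappa$ term, and otherwise it is absorbed in the error.

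Concretely, I would take the Abel-summation formula, which is valid with some constants $e_j$ and a constant $C$. Its Mellin transform agrees with $\mathcal R_\kappa(s)/s$ up to a function holomorphic near $0$ with the appropriate growth. Comparing principal parts identifies $e_j = c_{\kappa,j}$ and determines $C$. Consistency of the non-constant terms with $d_{\kappa,j}$ is automatic, since $\Psi_\kappa(z)(1+z) = \Phi_\kappa(z)$ gives
\[
c_{\kappa,j} = d_{\kappa,j} + d_{\kappa,j-1},
\]
matching the derivative relation produced by Abel summation.

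The remaining statements then follow. The two leading-term displays are the case $J = 0$, and \eqref{eq:reciprocal-first-two-terms} is the case $J = 1$ with an error of $O((\log x)^{\kappa-2})$. The extra $O(1)$ in that error covers $\kappa < 2$, where the constant term is not smaller than $(\log x)^{\kappa-2}$.
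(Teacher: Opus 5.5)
Your proposal is essentially the paper's proof. Both apply Selberg--Delange to $\mathcal B_\kappa$ with $\mu_j(\kappa) = d_{\kappa,j}$, use Abel summation together with $c_{\kappa,j} = d_{\kappa,j} + d_{\kappa,j-1}$, and pin down the additive constant by comparing $s\int_1^\infty S(x)x^{-s-1}\,dx = s^{-\kappa}\Phi_\kappa(s)$ as $s \to 0^+$.

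Two points need tightening, and the paper handles both explicitly. First, integrate the weighted expansion taken to some order $K > \kappa + 1$ and only then truncate; this removes your $\log\log x$ borderline and makes the remainder $o(1)$, which the $s \to 0^+$ comparison needs. Second, for non-integer $\kappa$ and $J \ge \lfloor \kappa \rfloor$ the constant is not absorbed by the $o(1)$ error, as you suggest; it has to be shown to vanish, which the comparison does because $s^{-\kappa}\Phi_\kappa(s)$ has no $s^0$ term.
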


\begin{proof}
We apply \Cref{prop:tenenbaum-selberg-delange} to the Dirichlet series
$\mathcal B_\kappa$.  By \Cref{lem:average-order-euler-product},
\[
\mathcal B_\kappa(s) = \zeta(s)^\kappa H_\kappa(s),
\]
where $H_\kappa$ is holomorphic and bounded on
$\operatorname{Re}(s) \geq 3 / 4$.  The coefficients $b_\kappa(n)$ are
nonnegative, so \Cref{prop:tenenbaum-selberg-delange} applies with $z = \kappa$.  Taking $N = J$
gives an error
$O_{\kappa, J}\!\left(x(\log x)^{\kappa - 2 - J}\right)$ as $x \to \infty$.  Moreover, we have
\[
\frac{z^\kappa\mathcal B_\kappa(1 + z)}{1 + z}
= \Psi_\kappa(z),
\]
so \eqref{eq:tenenbaum-coefficients} gives
$\mu_j(\kappa) = d_{\kappa, j}$ and hence the coefficient
$d_{\kappa, j} / \Gamma(\kappa - j)$.  This proves
\eqref{eq:weighted-reciprocal-full-expansion}.

We now transfer \eqref{eq:weighted-reciprocal-full-expansion} by Abel summation.  Put
\[
B_\kappa(x) \coloneqq \sum_{n \leq x}b_\kappa(n),
\qquad
A_\kappa(x) \coloneqq \sum_{n \leq x}\rksigma{\kappa}(n).
\]
Abel summation gives
\begin{equation}\label{eq:abel-average-order-transfer}
A_\kappa(x)
= \frac{B_\kappa(x)}x + \int_1^x\frac{B_\kappa(t)}{t^2}\,dt.
\end{equation}
Fix an integer $K > \kappa + 1$ and use \eqref{eq:weighted-reciprocal-full-expansion} through order
$K$.  Since
\[
\Phi_\kappa(z) = (1 + z)\Psi_\kappa(z),
\qquad
c_{\kappa, j} = d_{\kappa, j} + d_{\kappa, j - 1}
\qquad\text{where }d_{\kappa, -1} \coloneqq 0,
\]
splitting the integral at a fixed large point and integrating term by term in
\eqref{eq:abel-average-order-transfer} yields
\begin{equation}\label{eq:unweighted-expansion-with-constant}
A_\kappa(x)
=
\sum_{\substack{0 \leq j \leq K\\ j \neq \kappa}}
\frac{c_{\kappa, j}}{\Gamma(\kappa + 1 - j)}
(\log x)^{\kappa - j}
+ C_\kappa
+ O_{\kappa, K}\!\left((\log x)^{\kappa - 1 - K}\right)
\end{equation}
as $x \to \infty$.  Here the exclusion matters only when $\kappa$ is an integer; in that case
Abel summation leaves the constant term $C_\kappa$ undetermined.  The term
$C_\kappa$ also appears when $\kappa$ is not an integer, because the
integrated error is convergent at infinity.

It remains to determine $C_\kappa$.  For real $s > 0$, another application
of Abel summation gives the exact Mellin identity
\[
\mathcal R_\kappa(s)
= s\int_1^\infty A_\kappa(x)x^{-s - 1}\,dx
= s^{-\kappa}\Phi_\kappa(s).
\]
Let $m = \lfloor\kappa\rfloor$.  If $\kappa \notin \mathbb N$, insert
\eqref{eq:unweighted-expansion-with-constant}, subtract the terms
$j = 0, \ldots, m$, and let $s \to 0^+$.  The remaining powers and the error
have Mellin contribution tending to zero, whereas the constant contributes
$C_\kappa$.  Since
\[
s^{-\kappa}\Phi_\kappa(s)
- \sum_{j = 0}^{m}c_{\kappa, j}s^{j - \kappa}
\longrightarrow 0,
\]
we obtain $C_\kappa = 0$.  If $\kappa = m \in \mathbb N$, applying the Mellin
comparison after subtracting $j = 0, \ldots, m - 1$ gives
$C_\kappa = c_{\kappa, m}$.  Thus
\eqref{eq:unweighted-expansion-with-constant} contains exactly the terms
asserted in \eqref{eq:reciprocal-full-expansion}.  Taking $K$ larger than
an arbitrary prescribed $J$ and truncating proves \eqref{eq:reciprocal-full-expansion} for each
$J \geq 0$.

As $z \to 0$, since $H_\kappa(1) = 1$ and
\[
z\zeta(1 + z) = 1 + \gamma z + O(z^2),
\]
we have
\[
\Phi_\kappa(z)
= 1 + \bigl(\kappa\gamma + H_\kappa'(1)\bigr)z + O_\kappa(z^2).
\]
Dividing by $1 + z$ gives $d_{\kappa, 0}$ and $d_{\kappa, 1}$ in
\eqref{eq:first-average-coefficients}.  Taking $J = 0$ gives the one-term
estimates, while $J = 1$ gives \eqref{eq:reciprocal-first-two-terms}.  The
harmless $1$ in the final error term covers the range $1 < \kappa < 2$.
\end{proof}

The coefficients in \eqref{eq:reciprocal-full-expansion} can be recovered
from convergent local data.

\begin{remark}[Effective coefficients]
The coefficients in \Cref{thm:reciprocal-average-order} are effective because the Euler product for $H_\kappa$ may be differentiated term by term.  In particular, we have
\begin{equation}
H_\kappa'(1)
=
\sum_p (\log p)
\left\{
\frac{\kappa}{p - 1}
-
(1 - p^{-1})^\kappa
\sum_{a \ge 1}a\,\rksigma{\kappa}(p^a)
\right\}.
\end{equation}
As $p \to \infty$ through the primes, the summand is $O_\kappa((\log p) / p^2)$, so the prime sum converges absolutely.  Higher derivatives are obtained from analogous absolutely convergent prime sums.  Thus every $c_{\kappa, j}$ and $d_{\kappa, j}$ can be computed to certified precision by truncating these sums and bounding their tails.
\end{remark}

The Dirichlet series $\mathcal R_\kappa(s)$ also yields the convergent constant
that appears in the average order of $\ksigma{\kappa}$.  Following
\cite[Theorem~1.3]{Fan-Mits-Molnar}, we define
\begin{equation}\label{eq:mean-value-constant}
c(\kappa)
\coloneqq
\sum_{n \geq 1}\frac{\rksigma{\kappa}(n)}{n}.
\end{equation}
The local bounds show directly that the series in
\eqref{eq:mean-value-constant} converges, and they give a quantitative
truncation estimate.

\begin{theorem}\label{thm:mean-value}
For every real $\kappa > 1$, the Dirichlet series
\begin{equation}
\mathcal{R}_\kappa(s) \; \coloneqq \; \sum_{n \geq 1} \frac{\rksigma{\kappa}(n)}{n^s}
\end{equation}
converges absolutely for $\operatorname{Re}(s) > 0$.  For $\operatorname{Re}(s) > 1$, it satisfies
\[
\mathcal{R}_\kappa(s) = \frac{1}{\zeta(s)}\sum_{n \geq 1}\frac{\sigma_{-1}(n)^\kappa}{n^s}.
\]
In particular, $c(\kappa) = \mathcal{R}_\kappa(1)$ is finite and positive.  For every fixed $\delta \in (0, 1)$, we have
\begin{equation}\label{eq:partial-sum-with-1-n}
\sum_{n \leq x} \frac{\rksigma{\kappa}(n)}{n} \; = \; c(\kappa) \, + \, O_{\kappa, \delta}\!\left(x^{\delta - 1}\right)
\qquad\text{as }x \to \infty.
\end{equation}
\end{theorem}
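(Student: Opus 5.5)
We will argue directly from the Euler product, using only the local bounds of \Cref{lem:discrete-mvt}. The coefficients $\rksigma{\kappa}(n)$ are nonnegative and multiplicative. So absolute convergence of $\mathcal R_\kappa(\sigma)$ for real $\sigma > 0$ is equivalent to convergence of $\prod_p\bigl(1 + \sum_{a \ge 1}\rksigma{\kappa}(p^a)p^{-a\sigma}\bigr)$. By the upper bound in \eqref{eq:discrete-mvt},
\[
\rksigma{\kappa}(p^a) \le \frac{\kappa}{p^a}\Bigl(\frac{p}{p - 1}\Bigr)^{\kappa - 1} \le \frac{\kappa 2^{\kappa - 1}}{p^a}.
\]
Hence the local sum minus $1$ is at most $C_\kappa\sum_{a \ge 1}p^{-a(1 + \sigma)} \ll_\kappa p^{-1 - \sigma}$. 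This is summable over primes for every $\sigma > 0$, so the Euler product converges and $\mathcal R_\kappa(s)$ converges absolutely on $\operatorname{Re}(s) > 0$. (Equivalently, this is the shifted form $\mathcal R_\kappa(s) = \mathcal B_\kappa(s + 1)$ of \Cref{lem:average-order-euler-product}, whose absolute convergence for $\operatorname{Re}(s + 1) > 1$ is already stated there.)

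For the identity with $\zeta$, we use the Möbius inversion $\sigma_{-1}(n)^\kappa = \sum_{d \mid n}\rksigma{\kappa}(d)$. This says $\sigma_{-1}^\kappa = \mathbf 1 * \rksigma{\kappa}$ as Dirichlet convolution. For $\operatorname{Re}(s) > 1$ both $\zeta(s)$ and $\mathcal R_\kappa(s)$ converge absolutely, so their product is the absolutely convergent series $\sum_n \sigma_{-1}(n)^\kappa n^{-s}$; the latter converges absolutely anyway since $\sigma_{-1}(n) \ll \log\log n$. Dividing by $\zeta(s)$, which is nonzero there, gives the formula. Finiteness of $c(\kappa) = \mathcal R_\kappa(1)$ follows from absolute convergence at $s = 1$. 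Positivity is immediate because all terms are nonnegative and the $n = 1$ term equals $1$; in fact each local factor is $(1 - p^{-1})^{-\kappa}$ by telescoping, as in the proof of \Cref{lem:average-order-euler-product}.

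For the tail estimate \eqref{eq:partial-sum-with-1-n}, we use Rankin's trick. Fix $\delta \in (0, 1)$. For $x \ge 1$, nonnegativity gives
\[
0 \le \sum_{n > x}\frac{\rksigma{\kappa}(n)}{n}
\le \sum_{n > x}\frac{\rksigma{\kappa}(n)}{n}\Bigl(\frac{n}{x}\Bigr)^{1 - \delta}
\le x^{\delta - 1}\,\mathcal R_\kappa(\delta).
\]
The series $\mathcal R_\kappa(\delta)$ is finite by the first step, since $\delta > 0$. This yields the stated error $O_{\kappa, \delta}(x^{\delta - 1})$.

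There is no serious obstacle; the only point needing care is the uniformity of the local bound $\rksigma{\kappa}(p^a) \ll_\kappa p^{-a}$ across all primes and exponents. This is exactly what \Cref{lem:discrete-mvt} supplies, via $\sigma_{-1}(p^a) < p/(p - 1) \le 2$.
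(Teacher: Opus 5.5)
Your argument is correct and is essentially the paper's proof: the uniform local bound $\rksigma{\kappa}(p^a) \le C_\kappa p^{-a}$ from \Cref{lem:discrete-mvt}, absolute convergence of the Euler product with nonnegative coefficients on $\operatorname{Re}(s) > 0$, the convolution identity, and the tail bound $x^{\delta - 1}\mathcal R_\kappa(\delta)$. You write the identity as $\sigma_{-1}^{\,\kappa} = \mathbf 1 * \rksigma{\kappa}$ and divide by $\zeta(s)$, whereas the paper writes $\rksigma{\kappa} = \sigma_{-1}^{\,\kappa} * \mu$; these are the same. One aside, however, is false and should be deleted: the local factors of $c(\kappa) = \mathcal R_\kappa(1) = \mathcal B_\kappa(2)$ are $1 + \sum_{a \ge 1}\rksigma{\kappa}(p^a)p^{-a}$, not $(1 - p^{-1})^{-\kappa}$. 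The telescoping in \Cref{lem:average-order-euler-product} concerns $\mathcal B_\kappa(1) = \mathcal R_\kappa(0)$, where the product of those factors diverges; your version would contradict the finiteness you had just proved. Positivity already follows from the $n = 1$ term, so nothing else in your proof is affected.
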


\begin{proof}
The convolution identity $\rksigma{\kappa} = \sigma_{-1}^{\,\kappa}*\mu$ gives the factorization wherever both sides converge absolutely.  To establish convergence, \Cref{lem:discrete-mvt} gives
\[
0 < \rksigma{\kappa}(p^a)
\leq \frac{\kappa}{p^a}\left(\frac{p}{p - 1}\right)^{\!\kappa - 1}
\leq C_\kappa p^{-a}
\]
for each prime $p$ and $a \geq 1$.  Hence, as $p \to \infty$ through the primes and uniformly on each half-plane $\operatorname{Re}(s) \geq \delta > 0$, we have
\[
\sum_{a \geq 1}\frac{\rksigma{\kappa}(p^a)}{p^{as}}
= O_{\kappa, \delta}\!\left(p^{-1 - \delta}\right).
\]
The Euler product for $\mathcal R_\kappa(s)$ therefore converges absolutely for $\operatorname{Re}(s) > 0$, and the same is true of its Dirichlet series because its coefficients are non-negative.  Evaluating at $s = 1$ gives $c(\kappa) = \mathcal R_\kappa(1) > 0$.

We fix $\delta \in (0, 1)$.  For $n > x$ we have $n^{-1} \leq x^{\delta - 1}n^{-\delta}$, and therefore
\[
0 \leq c(\kappa) - \sum_{n \leq x}\frac{\rksigma{\kappa}(n)}n
= \sum_{n > x}\frac{\rksigma{\kappa}(n)}n
\leq x^{\delta - 1}\mathcal R_\kappa(\delta).
\]
This proves \eqref{eq:partial-sum-with-1-n}.
\end{proof}

At $\kappa = 2$, the reciprocal mean has an explicit form already obtained in our earlier paper.

\begin{corollary}[{\cite[Lemma~3.5, equations~(16)--(18)]{Fan-Mits-Molnar}}]\label{cor:mean-value-kappa-2}
For $\kappa = 2$, we have
\[
\mathcal{R}_2(s)
=
\frac{\zeta(s + 1)^2\zeta(s + 2)}{\zeta(2s + 2)}.
\]
In particular, we have
\[
c(2) \, = \, \frac{\zeta(2)^2 \zeta(3)}{\zeta(4)} \, = \, \frac{5}{2}\, \zeta(3),
\]
and
\begin{equation}\label{eq:partial-sum-kappa-2}
\sum_{n \leq x} \frac{\rksigma{2}(n)}{n} \, = \, \frac{5}{2}\zeta(3) \, + \, O\!\left(\frac{\log x}{x}\right).
\end{equation}
\end{corollary}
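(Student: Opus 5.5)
The plan is to compute the Euler factors of $\mathcal R_2(s)$ exactly from \eqref{eq:rksigma-at-prime-powers}, match them with the proposed zeta quotient, and then read off the constant and the error term. Throughout, write $X = p^{-1}$ and $Y = p^{-s}$.

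\textbf{Local factors.} Since $\sigma_{-1}(p^a) = (1 - X^{a+1})/(1 - X)$, we have
\[
\rksigma{2}(p^a) = \frac{(1 - X^{a+1})^2 - (1 - X^a)^2}{(1 - X)^2} = \frac{X^a(1+X)(2 - X^a - X^{a+1})}{(1-X)^2}.
\]
The local factor $1 + \sum_{a \ge 1}\rksigma{2}(p^a)Y^a$ is then a combination of three geometric series, in $XY$ and in $X^2Y$. After summing them, I would verify the rational-function identity
\[
1 + \sum_{a\ge1}\rksigma{2}(p^a)Y^a = \frac{1 - X^2Y^2}{(1 - XY)^2(1 - X^2Y)}.
\]
A cheaper check is to compare the generating function $\sum_{a \ge 0}\sigma_{-1}(p^a)^2 Y^a$. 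This is the classical Ramanujan-type series
\[
\frac{1 - X Y^2}{(1-Y)(1-XY)^2(1-X^2Y)},
\]
adapted here with $\sigma_{-1}$ in place of $\sigma$. Multiplying it by $(1 - Y)$, which is the local factor of $1/\zeta(s)$, should give the displayed factor.

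One caution is that the numerator must come out as $1 - X^2Y^2$. That is exactly the local factor of $1/\zeta(2s+2)$, because $X^2Y^2 = p^{-2s-2}$. I would confirm it by expanding to second order in $Y$. The coefficient of $Y$ is $2X + X^2 = \rksigma{2}(p)$, which checks. For $Y^2$ the target is $\rksigma{2}(p^2) = (1+X+X^2)^2 - (1+X)^2$, which should match the expansion of the rational function once it has been double-checked.

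\textbf{Assembling the zeta quotient.} Taking the product over primes in $\operatorname{Re}(s) > 0$, where \Cref{thm:mean-value} gives absolute convergence, yields
\[
\mathcal R_2(s) = \frac{\zeta(s+1)^2\zeta(s+2)}{\zeta(2s+2)}.
\]
At $s = 1$ this gives $c(2) = \zeta(2)^2\zeta(3)/\zeta(4)$. Since $\zeta(2)^2/\zeta(4) = (\pi^4/36)(90/\pi^4) = 5/2$, we get $c(2) = \tfrac52\zeta(3)$.

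\textbf{Error term.} The bound $O(x^{\delta-1})$ from \Cref{thm:mean-value} is not sharp enough, so I would use the convolution structure directly. Write $\rksigma{2}(n)\,n = (1 * 1 * g)(n)$, where $g$ is the coefficient sequence of $\zeta(s+1)/\zeta(2s)$ in these shifted variables, so that $\sum_n |g(n)|/n^{1-\epsilon}$ converges. The tail $\sum_{n > x}\rksigma{2}(n)/n$ is then controlled by $\sum_{m > x/k} d(m)/m^2$, summed against $g$-weights. The divisor bound
\[
\sum_{m > y} d(m)m^{-2} \ll \frac{\log y}{y}
\]
gives $O((\log x)/x)$ after summing the absolutely convergent weights.

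The main obstacle is the exact local algebra, in particular getting the numerator right. The tail bound is routine once the Dirichlet series is identified.
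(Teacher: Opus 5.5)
Your route is correct and is the natural one. The paper gives no argument of its own here; it imports the statement from \cite[Lemma~3.5]{Fan-Mits-Molnar}. Within the present paper, the quickest derivation combines the factorization $\mathcal R_2(s) = \zeta(s)^{-1}\sum_n \sigma_{-1}(n)^2 n^{-s}$ from \Cref{thm:mean-value} with Ramanujan's formula for $\sum_n \sigma_a(n)\sigma_b(n)n^{-s}$ at $a = b = -1$, which is your ``cheaper check''. Your target local identity, the value $c(2) = \tfrac52\zeta(3)$, and the tail argument are all right. However, two intermediate displays are wrong as written, and the first one would make the geometric-series verification you describe fail.

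First, $(1 - X^{a+1})^2 - (1 - X^a)^2 = X^a(1 - X)(2 - X^a - X^{a+1})$, with $1 - X$ rather than $1 + X$. Hence
\[
\rksigma{2}(p^a) = \frac{X^a(2 - X^a - X^{a+1})}{1 - X}.
\]
Your version is off by the factor $(1 + X)/(1 - X)$: at $a = 1$ it gives $X(1 + X)(2 + X)/(1 - X)$ instead of $2X + X^2$. The corrected formula also gives $1$ at $a = 0$, so summing over $a \ge 0$ yields
\[
\frac{1}{1 - X}\left(\frac{2}{1 - XY} - \frac{1 + X}{1 - X^2Y}\right)
= \frac{1 + XY}{(1 - XY)(1 - X^2Y)}
= \frac{1 - X^2Y^2}{(1 - XY)^2(1 - X^2Y)},
\]
which is exactly your target.

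Second, in the Ramanujan local factor for $\sum_{a \ge 0}\sigma_{-1}(p^a)^2 Y^a$, the numerator is $1 - X^2Y^2$, the local factor of $\zeta(2s + 2)^{-1}$, not $1 - XY^2$. With $1 - XY^2$, multiplying by $1 - Y$ does not reproduce your identity.

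The error-term argument is sound, and you are right that \Cref{thm:mean-value} alone gives only $O(x^{\delta - 1})$. To make it complete, note that $\sum_k |g(k)|/k \le \zeta(2)^2$ and split the $k$-sum:
\begin{itemize}
\item For $k \le x/2$, use $\sum_{m > y} d(m)m^{-2} \ll (1 + \log y)/y$ with $y = x/k \ge 2$. This range contributes $O((1 + \log x)/x)$.
\item For $k > x/2$, bound the inner sum by $\zeta(2)^2$ and use $|g(k)|k^{-2} \le 2|g(k)|/(kx)$. This range contributes $O(1/x)$.
\end{itemize}
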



\subsection{Maximal order}

The normalized form of \eqref{eq:global-upper-bound} gives the upper half of the
maximal-order statement.  To obtain the matching lower half, we compare
the local factors with their limiting Mertens factors and choose a
sequence on which the prime-power exponents grow together.  We write
$\theta(x) \coloneqq \sum_{p \leq x}\log p$, we let $\pi(x)$ count the primes at
most $x$, and we put $\primorial{x} \coloneqq \prod_{p \leq x}p$.

\begin{lemma}\label{lem:local-mertens-factor}
For each prime $p$ and integer $a \geq 0$, we have
\begin{equation}\label{eq:local-mertens-factor}
\sigma_{-1}(p^a) \; = \; \frac{1 - p^{-(a + 1)}}{1 - p^{-1}} \; = \; \frac{p}{p - 1}\bigl(1 - p^{-(a + 1)}\bigr).
\end{equation}
In particular, $\sigma_{-1}(p^a) < p / (p - 1)$ for each $a$, and $\sigma_{-1}(p^a) \to p / (p - 1)$ as $a \to \infty$.
\end{lemma}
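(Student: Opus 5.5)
The identity follows from the geometric series for the divisor sum of a prime power. By definition, the divisors of $p^a$ are $1, p, \ldots, p^a$, so
\[
\sigma_{-1}(p^a) = \sum_{j = 0}^{a} p^{-j}.
\]
This is a finite geometric sum with ratio $p^{-1} \neq 1$. It therefore equals
\[
\frac{1 - p^{-(a + 1)}}{1 - p^{-1}}.
\]
Multiplying numerator and denominator by $p$ rewrites $1/(1 - p^{-1})$ as $p/(p - 1)$. This gives the second form in \eqref{eq:local-mertens-factor}.

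For the consequences, note that $p \ge 2$ and $a \ge 0$, so $0 < p^{-(a + 1)} < 1$. Hence the factor $1 - p^{-(a + 1)}$ lies strictly between $0$ and $1$. Since $p/(p - 1) > 0$, this yields the strict inequality $\sigma_{-1}(p^a) < p/(p - 1)$. As $a \to \infty$, we have $p^{-(a + 1)} \to 0$, and the limit $\sigma_{-1}(p^a) \to p/(p - 1)$ follows by continuity.

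There is no real obstacle here. The only point needing care is that the inequality is strict, and this holds because $p^{-(a + 1)}$ never vanishes. The lemma will be used in the maximal-order argument, where it supplies the comparison of each local factor with its Mertens factor $(1 - p^{-1})^{-1}$. The relative deficit there is exactly $1 - p^{-(a + 1)}$.
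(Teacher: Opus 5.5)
Your proof is correct and follows the paper's argument: sum the finite geometric series $\sum_{j=0}^{a} p^{-j}$, factor out $p/(p-1)$, and let $p^{-(a+1)} \to 0$. Your remark that $p^{-(a+1)} > 0$ gives the strict inequality is the same justification the paper relies on implicitly.
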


\begin{proof}
The first equality is the geometric series sum.  The second factors out $p / (p - 1)$, and the limit follows since $p^{-(a + 1)} \to 0$.
\end{proof}

By multiplicativity, \eqref{eq:local-mertens-factor} gives
\[
\sigma_{-1}(n) < M(n)
\coloneqq
\prod_{p \mid n}\frac p{p - 1}
\qquad\text{for }n > 1.
\]
We call $M(n)$ the \defi{Mertens product} of $n$.  Equality is approached only as the exponents of its prime divisors tend to infinity.

\begin{theorem}[{\Cref{thm:intro-order-theory}}]\label{thm:gronwall}
For each real $\kappa > 1$, we have
\begin{equation}\label{eq:gronwall}
\limsup_{n \to \infty}\rkRobinRatio{\kappa}{n} = 1.
\end{equation}
Equivalently, we have
\[
\limsup_{n \to \infty}
\frac{n\,\rksigma{\kappa}(n)}
{\kappa^{\omega(n)}\RobinScalePower{n}{\kappa - 1}}
= 1.
\]
\end{theorem}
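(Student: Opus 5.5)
The plan is to prove the two inequalities $\limsup \le 1$ and $\limsup \ge 1$ separately.

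\emph{Upper bound.} By \Cref{cor:global-upper-bound}, $\rkrho{\kappa}(n) \le \sigma_{-1}(n)$ for every $n$, since raising $n\rksigma{\kappa}(n)/\kappa^{\omega(n)} \le \sigma_{-1}(n)^{\kappa-1}$ to the power $1/(\kappa-1)$ gives exactly this. Gr\"onwall's theorem then gives $\limsup \rkrho{\kappa}(n)/\log\log n \le e^\gamma$. Alternatively, \Cref{cor:robin-derived-upper-bound} yields the same conclusion, because its error $0.6483/\log\log n$ is $o(\log\log n)$.

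\emph{Lower bound: the test sequence.} I will use a sequence on which both the Mertens comparison and the difference quotient are nearly sharp. Take $n_x = \prod_{p\le x} p^{a_p}$ with exponents $a_p$ chosen so that $p^{a_p}$ is roughly $x^2$ for small $p$ and $a_p \ge 1$ throughout. For concreteness, fix the exponent $a$ uniformly large, say $n_x = \primorial{x}^{a}$ with $a = a(x) \to \infty$ slowly. Then $\log n_x = a\,\theta(x) \sim a x$, so $\log\log n_x = \log x + \log a + o(1) \sim \log x$ provided $\log a = o(\log x)$.

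\emph{Lower bound: the estimate.} By \Cref{cor:global-lower-bound},
\[
\rkrho{\kappa}(n_x) \ge \prod_{p\le x}\sigma_{-1}(p^{a-1}) = \prod_{p\le x}\frac{p}{p-1}\bigl(1-p^{-a}\bigr),
\]
using \Cref{lem:local-mertens-factor}. The correction factor satisfies
\[
\prod_{p\le x}(1-p^{-a}) \ge \prod_p (1-p^{-a}) = 1/\zeta(a) \to 1
\]
as $a \to \infty$ (even $a \ge 2$ fixed would only lose a constant, so $a \to \infty$ is what we need). Mertens' theorem gives $\MertensProduct{x} \sim e^\gamma \log x$. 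Hence
\[
\rkrho{\kappa}(n_x) \ge (1+o(1))\,e^\gamma\log x = (1+o(1))\RobinScale{n_x},
\]
provided $a(x)\to\infty$ with $\log a = o(\log x)$. For instance, $a = \lfloor \log x\rfloor$ works.

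\emph{Main obstacle.} The only delicate point is balancing the exponent growth. Larger $a$ makes the local factors $\sigma_{-1}(p^{a-1})$ closer to $p/(p-1)$, but it inflates $\log\log n_x$ by $\log a$. The choice $a \asymp \log x$ makes both errors $o(1)$ relative to $\log x$.

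\emph{Conclusion.} Combining the two halves gives \eqref{eq:gronwall}. The equivalent formulation follows by raising to the power $\kappa-1$, which is continuous and monotone.
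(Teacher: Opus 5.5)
Your proposal is correct and follows essentially the same route as the paper. The upper bound comes from $\rkrho{\kappa}(n) \le \sigma_{-1}(n)$ together with Gr\"onwall's theorem. The lower bound uses primorial powers $\primorial{x}^{a}$ with $a \to \infty$ and $\log a = o(\log x)$, via \Cref{cor:global-lower-bound}, \Cref{lem:local-mertens-factor}, and Mertens' third theorem.

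The only cosmetic difference is how the correction factor is handled. You bound $\prod_{p \le x}(1 - p^{-a})$ from below by $1/\zeta(a)$, whereas the paper uses the estimate $-\log(1 - t) \le 2t$.
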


The normalized form of \eqref{eq:global-upper-bound} supplies the upper bound.  Primorial powers with
slowly growing exponents asymptotically attain it.

\begin{proof}[Proof of \Cref{thm:gronwall}]
By \eqref{eq:global-upper-bound} and \eqref{eq:normalized-function}, $\rkrho{\kappa}(n) \leq \sigma_{-1}(n)$.  Classical Gr\"{o}nwall therefore gives
\[
\limsup_{n \to \infty}\rkRobinRatio{\kappa}{n} \leq 1.
\]
For the matching lower bound, we let $p_k$ denote the $k$th prime.  We choose a function $T\colon\bbZ_{\geq 1} \to \bbZ_{\geq 2}$ such that
\[
T(k) \to \infty,
\qquad
\log T(k) = o(\log p_k),
\]
and set
\[
n_k \; \coloneqq \; \primorial{p_k}^{\,T(k)}.
\]
By \eqref{eq:global-lower-bound} and \eqref{eq:normalized-function}, we have
\begin{equation}
\rkrho{\kappa}(n_k) \; \geq \; \prod_{p \leq p_k} \sigma_{-1}(p^{T(k) - 1}) \; = \; \MertensProduct{p_k} \cdot \prod_{p \leq p_k}\bigl(1 - p^{-T(k)}\bigr),
\end{equation}
where the equality uses \Cref{lem:local-mertens-factor}.  Mertens' third theorem gives
$\MertensProduct{p_k} \sim e^\gamma\log p_k$.  The factor
$\prod_{p \leq p_k}(1 - p^{-T(k)})$ tends to $1$, because
\[
0 \; \leq \; - \log \prod_{p \leq p_k}\bigl(1 - p^{-T(k)}\bigr) \; \leq \; 2 \sum_{p} p^{-T(k)} \; \leq \; C\, 2^{-T(k)} \; \longrightarrow \; 0.
\]
Here we used $-\log(1 - x) \leq 2x$ for $x \leq 1 / 2$.  Hence
\[
\rkrho{\kappa}(n_k) \geq (1 + o(1))e^\gamma\log p_k.
\]
Moreover, we have
\[
\log n_k = T(k)\theta(p_k) \sim T(k)p_k.
\]
The condition $\log T(k) = o(\log p_k)$ therefore gives
$\log\log n_k = \log p_k + o(\log p_k)$.  After division by
$\RobinScale{n_k}$, the lower limit along this sequence is at least
$1$.  Together with \eqref{eq:global-upper-bound} and \eqref{eq:normalized-function}, the lower-limit estimate establishes
\eqref{eq:gronwall}.
\end{proof}

\subsection{A uniform explicit upper bound}

\Cref{thm:gronwall} is asymptotic.  For a uniform pointwise bound,
we compare the Mertens product of an integer with an initial prime
product and then relate the relevant primorial to $\log n$.  \Cref{prop:dusart,prop:schoenfeld} provide the required explicit input.

\begin{proposition}[{\cite[Theorem~5.9, p.~245]{Dusart}}]\label{prop:dusart-source}
For every $x \geq 2{,}278{,}382$, we have
\begin{equation}\label{eq:dusart-direct-product}
\frac{e^{-\gamma}}{\log x}
\left(1 - \frac{0.2}{(\log x)^3}\right)
< \prod_{p \leq x}(1 - p^{-1})
< \frac{e^{-\gamma}}{\log x}
\left(1 + \frac{0.2}{(\log x)^3}\right),
\end{equation}
and
\begin{equation}\label{eq:dusart-reciprocal-product}
\MertensProduct{x}
< e^\gamma\log x\left(1 + \frac{0.2}{(\log x)^3}\right).
\end{equation}
\end{proposition}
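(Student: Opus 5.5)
Since this proposition is quoted from \cite[Theorem~5.9]{Dusart}, the plan is to recover it from an explicit Chebyshev bound by partial summation, following Dusart's route. The starting point is
\[
-\log\prod_{p\le x}(1-p^{-1})
=\sum_{p\le x}\frac1p+\sum_{p\le x}\Bigl(-\log(1-p^{-1})-\frac1p\Bigr).
\]
The second sum is $\gamma-B-\sum_{p>x}\bigl(-\log(1-p^{-1})-p^{-1}\bigr)$, where $B$ is the Meissel--Mertens constant. Its tail is positive and at most $\sum_{n>x}n^{-2}\le 1/(x-1)$, which is negligible against $(\log x)^{-3}$ in this range. The first sum is written as $\log\log x+B+E(x)$. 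So everything reduces to an explicit bound $|E(x)|\le \eta'/(\log x)^3$ with a suitable constant $\eta'$.

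To bound $E(x)$, I would use Stieltjes integration against $d\theta(t)/(t\log t)$ and put $\theta(t)=t+R(t)$. This gives
\[
E(x)=\frac{R(x)}{x\log x}-\int_x^\infty R(t)\,\frac{1+\log t}{t^2(\log t)^2}\,dt.
\]
The input would be an explicit bound $|R(t)|\le \eta\, t/(\log t)^3$ for $t\ge 2{,}278{,}382$, which Dusart proves in the same paper. With it, the boundary term is at most $\eta/(\log x)^4$, and the integral is at most $\eta\bigl(1/(3(\log x)^3)+1/(4(\log x)^4)\bigr)$. Hence $|E(x)|\le c/(\log x)^3$ with $c$ comfortably below $0.2$.

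For the first display, I would exponentiate. The product equals $\frac{e^{-\gamma}}{\log x}\exp(-E(x)+\tau(x))$, where $0<\tau(x)\le 1/(x-1)$. Then $1-u\le e^{-u}$ and $e^{u}\le 1+u+u^2$ for small $u$ give both sides with $0.2$, because the slack between $c$ and $0.2$ absorbs the quadratic terms and $\tau(x)$ at $x\ge 2{,}278{,}382$. For the reciprocal product, I would not invert the first display, since $1/(1-\varepsilon)>1+\varepsilon$. Instead I would exponentiate $+E(x)-\tau(x)$ directly: the product's reciprocal is $e^{\gamma}\log x\cdot\exp(E(x)-\tau(x))\le e^\gamma\log x\,(1+c'/(\log x)^3)$ with $c'<0.2$, again using the slack.

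The main obstacle is the Chebyshev input $|\theta(t)-t|\le\eta\,t/(\log t)^3$ with a small explicit $\eta$ at this threshold. It rests on a verified zero-free region, numerically checked zeros of $\zeta$ up to a large height, and direct computation of $\theta$ below some bound. I would take it from \cite{Dusart} rather than reprove it. The remaining work is bookkeeping to check that the constants stay below $0.2$ for all $x\ge 2{,}278{,}382$.
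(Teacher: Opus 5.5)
The paper gives no proof of this proposition; it quotes it from Dusart. Most of your reduction is sound: the identity $-\log\prod_{p\le x}(1-p^{-1})=\log\log x+\gamma+E(x)-\tau(x)$, the Stieltjes formula for $E(x)$, the integral bound $\eta\bigl(\frac{1}{3(\log x)^3}+\frac{1}{4(\log x)^4}\bigr)$, and your choice to derive the reciprocal bound directly rather than by inverting the first display.

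The gap is the Chebyshev input, which is false at this height. At $x=2{,}278{,}382$ we have $\log x\approx 14.64$, so your bookkeeping gives $|E(x)|\le\eta\bigl(\tfrac13+\tfrac{1.25}{\log x}\bigr)(\log x)^{-3}\approx 0.42\,\eta\,(\log x)^{-3}$. Getting $c<0.2$ therefore requires $|\theta(t)-t|\le\eta\,t/(\log t)^3$ for \emph{all} $t\ge 2{,}278{,}382$ with $\eta<0.48$. No such bound is true, in Dusart or elsewhere.

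In this range $t-\theta(t)$ has the size of $\sqrt t$: at any $t$ with $\psi(t)\le t$ one already has $t-\theta(t)\ge\theta(\sqrt t)$. Meanwhile $t/(\log t)^3<\sqrt t$ until $t$ is about $2\times10^7$. At $t=2{,}278{,}382$ the two quantities are about $7.3\times10^2$ and $1.5\times10^3$. So the best admissible $\eta$ from this threshold onward exceeds $1$, and your estimate then gives $c>0.4$. Explicit bounds of this shape with $\eta\le1$ only start around $10^8$.

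The missing ingredient is a finite computation, and that computation is what actually determines the threshold $2{,}278{,}382$. A correct reconstruction would proceed in three steps.
\begin{enumerate}
\item Choose $X_1$, of order $10^9$ or more, beyond which an explicit bound $|\theta(t)-t|\le\eta\,t/(\log t)^3$ holds with $\eta\bigl(\tfrac13+\tfrac{1.25}{\log X_1}\bigr)$ comfortably below $0.2$.
\item Run your partial-summation argument only for $x\ge X_1$.
\item For $2{,}278{,}382\le x<X_1$, verify both displays by evaluating $\prod_{p\le x}(1-p^{-1})$ prime by prime in interval arithmetic. Check both ends of each prime gap: the product is constant there and the three bounding functions are monotone. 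This is the same procedure the paper uses for the range in \Cref{prop:dusart}.
\end{enumerate}
As written, your argument proves the proposition only for $x\ge X_1$, not from $2{,}278{,}382$.
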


\begin{proposition}\label{prop:dusart}
For all $x \geq \MertensThreshold$, we have
\begin{equation}\label{eq:mertens-explicit-extension}
\MertensProduct{x} \; < \; e^\gamma \log x \parent{1 + \frac{\MertensCorrection}{(\log x)^2}}.
\end{equation}
\end{proposition}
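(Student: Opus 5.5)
The bound splits at $x = 2{,}278{,}382$. For $x \ge 2{,}278{,}382$ it follows from \Cref{prop:dusart-source}. For $\MertensThreshold \le x < 2{,}278{,}382$ we rely on a finite computation over the primes, using the fact that the left-hand side is a step function.

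\textbf{Large range.} Suppose $x \ge 2{,}278{,}382$. Then \eqref{eq:dusart-reciprocal-product} gives the upper bound $e^\gamma\log x\,(1 + 0.2(\log x)^{-3})$. It remains to check that
\[
\frac{0.2}{(\log x)^3} \le \frac{\MertensCorrection}{(\log x)^2},
\]
which is equivalent to $\log x \ge 0.2/\MertensCorrection \approx 1.83$. This holds with room to spare, since $\log x > 14$ throughout the range.

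\textbf{Intermediate range.} The product $P(x) \coloneqq \MertensProduct{x}$ is constant on each interval $[p_k, p_{k+1})$ between consecutive primes. The right-hand side
\[
f(x) \coloneqq e^\gamma\log x\,\bigl(1 + \MertensCorrection/(\log x)^2\bigr) = e^\gamma\bigl(\log x + \MertensCorrection/\log x\bigr)
\]
has derivative $e^\gamma x^{-1}\bigl(1 - \MertensCorrection/(\log x)^2\bigr) > 0$ for $x > e^{0.34}$, so $f$ is increasing on the whole range. On each interval it therefore suffices to test the left endpoint: we need
\[
P(p_k) < f(p_k)
\]
for every prime $p_k$ with $\MertensThreshold \le p_k < 2{,}278{,}382$. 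One also handles the initial partial interval $[\MertensThreshold, p)$, where $p$ is the first prime $\ge \MertensThreshold$; there one checks $P(\MertensThreshold) < f(\MertensThreshold)$ with the product taken over primes $\le \MertensThreshold$. This amounts to about $1.7\times 10^5$ comparisons. We evaluate them in directed (outward-rounded) interval arithmetic, accumulating $\sum_p -\log(1 - 1/p)$ with rigorous enclosures and comparing against a lower enclosure of $\log f(p_k)$. This is the same certification framework the paper uses later.

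\textbf{Main obstacle.} The difficulty lies in the threshold itself: $\MertensThreshold$ is presumably chosen near the last failure. Consequently the margin $f(p_k) - P(p_k)$ may be tiny for primes just above the threshold. Rounding error accumulated over some $10^5$ summands must stay well below that margin, so the log-sum should be carried in high precision with certified enclosures. We would first run the computation in double-double or MPFR interval arithmetic and record the minimal certified margin. If that margin is uncomfortably small near the threshold, we would treat the first few thousand primes separately with exact rational or extra-precision evaluation.
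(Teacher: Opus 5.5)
Your proposal is correct and matches the paper's proof. Both use a directed-interval check at the roughly $1.66\times 10^5$ prime endpoints in $[\MertensThreshold, 2{,}278{,}382)$, justified by the step-function and increasing-right-hand-side argument, and both use Dusart for the tail. The only difference is cosmetic: you apply \eqref{eq:dusart-reciprocal-product} directly and compare $0.2/(\log x)^3$ with $\MertensCorrection/(\log x)^2$, whereas the paper inverts the lower bound in \eqref{eq:dusart-direct-product} via $(1-\delta)^{-1} < 1 + 2\delta$, so your tail step is slightly more direct.
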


\begin{proof}
For $\MertensThreshold \leq x < 2{,}278{,}382$, directed interval arithmetic verifies \eqref{eq:mertens-explicit-extension} at all $165{,}866$ prime endpoints.  The product is constant between consecutive primes, whereas the right-hand side is increasing, so these checks cover the entire interval.  The largest required correction coefficient is less than $0.109511$ and occurs at $x = 24{,}137$.

For $x \geq 2{,}278{,}382$, the lower bound in
\eqref{eq:dusart-direct-product} gives
\[
\prod_{p \leq x}(1 - p^{-1})
>
\frac{e^{-\gamma}}{\log x}
\parent{1 - \frac{0.2}{(\log x)^3}}.
\]
Put $\delta = 0.2 / (\log x)^3$.  Since $\log x > 4$, we have $0 < \delta < 1 / 2$, and therefore
\[
\parent{1 - \delta}^{-1} < 1 + 2\delta
< 1 + \frac{\MertensCorrection}{(\log x)^2}.
\]
Taking reciprocals proves \eqref{eq:mertens-explicit-extension}.
\end{proof}

We now require a result due to Schoenfeld.

\begin{proposition}[{\cite[Corollary~2*]{Schoenfeld}}]\label{prop:schoenfeld}
For all $x \geq \MertensThreshold$, we have
\begin{equation}\label{eq:schoenfeld-lower}
\theta(x) \; > \; x\parent{1 - \frac{1}{8\log x}}.
\end{equation}
\end{proposition}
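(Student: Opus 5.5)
The statement is quoted from Schoenfeld, so the plan is to reduce it to his published range plus a finite directed-interval-arithmetic check on the remaining initial segment, in the same style as the proof of \Cref{prop:dusart}. I am not certain of the exact threshold in Schoenfeld's Corollary~2*; let $X_0$ denote the lower endpoint of the range in which he states \eqref{eq:schoenfeld-lower} (possibly with a slightly different but stronger constant, which would still imply the displayed form). If $X_0 \le \MertensThreshold$, the proposition is a direct restatement of the citation and nothing more is needed. Otherwise I would split $[\MertensThreshold, \infty)$ into the cited range $x \ge X_0$ and the finite range $\MertensThreshold \le x < X_0$.

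On the finite range I would exploit monotonicity.

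\emph{The right-hand side increases.} Put $g(x) = x\bigl(1 - 1/(8\log x)\bigr)$. Then
\[
g'(x) = 1 - \frac{1}{8\log x} + \frac{1}{8(\log x)^2},
\]
which is positive for $x > e$, so $g$ is increasing.

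\emph{The left-hand side is a step function.} $\theta$ is constant on each interval $[p_k, p_{k+1})$ between consecutive primes.

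Hence on $[p_k, p_{k+1})$ the worst case is the left limit at $p_{k+1}$. It therefore suffices to verify
\[
\theta(p_k) > g(p_{k+1})
\]
for every consecutive prime pair with $p_k$ in the range. Two boundary pieces also need checking: the initial piece $[\MertensThreshold, p)$, where $p$ is the first prime at least $\MertensThreshold$, handled by the same comparison using the largest prime below $\MertensThreshold$; and the last piece, handled by checking up to the first prime beyond $X_0$ so that the pieces overlap the cited range.

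The computation itself is straightforward. I would sieve the primes, accumulate $\theta(p_k)$ as a sum of $\log p$ evaluated with outward rounding, and evaluate $g(p_{k+1})$ with upward rounding. At the scale of $x$ near $2\cdot 10^4$ the margin is roughly $x/(8\log x) \approx 250$. By comparison, the actual gap $x - \theta(x)$ there is only of order $\sqrt{x}$, about $140$, so the inequality should hold with room to spare; the interval arithmetic only has to certify a margin of roughly a hundred units.

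The main obstacle is bibliographic rather than mathematical: pinning down precisely which constant and threshold Schoenfeld's Corollary~2* provides, and confirming that it is unconditional (the starred results are), so that the finite check is set up to cover exactly the gap. If his threshold is much larger, on the order of $10^6$ or beyond, the check becomes long but remains entirely routine. As a fallback, one could instead invoke the Rosser--Schoenfeld or Dusart bounds of the form $\theta(x) > x\bigl(1 - c/\log x\bigr)$ with $c \le 1/8$ in a known range, then apply the same monotone endpoint verification below that range.
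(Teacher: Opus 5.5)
Your plan is correct and matches the paper, which gives no proof and simply quotes Schoenfeld's Corollary~2*; the range stated there is exactly $x \ge \MertensThreshold$, so your first case applies and no supplementary finite check is needed. One correction to your numerical heuristic: this threshold is sharp, because $\theta(\MertensThreshold) < \PrimorialLogThreshold$ forces the inequality to fail on $[19{,}417, 19{,}421)$. Near the start of the range the margin is therefore only a few units rather than about a hundred, though a directed-interval check would still go through.
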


The largest prime factor $P$ of $n$ need not satisfy $\theta(P) \leq \log n$.
For a nonnegative real number $y$, we define the \defi{primorial residual}
$\calP^*(y)$ to be the largest prime $\ell$ for which
$\primorial{\ell} \leq y$, if such a prime exists, and we put
$\calP^*(y) = 0$ otherwise.  We put $x = \calP^*(n)$.  Then
$\theta(x) \leq \log n$ and $\omega(n) \leq \pi(x)$, which together compare
the prime support of $n$ with the initial Mertens product through $x$.

\begin{proposition}\label{prop:mertens-transfer}
For each integer $n \geq e^{\PrimorialLogThreshold}$, we have
\begin{equation}\label{eq:mertens-transfer}
M(n) \; < \; \RobinScale{n} + \frac{\TransferCorrection}{\log\log n}.
\end{equation}
\end{proposition}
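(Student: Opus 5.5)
Set $x = \calP^*(n)$, so that $\theta(x) \le \log n$ and $\omega(n) \le \pi(x)$. Let $x^+$ be the prime following $x$; by definition $\theta(x^+) > \log n$.

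The plan has three steps:
\begin{enumerate}
\item Bound $M(n)$ above by the Mertens product through $x^+$.
\item Apply \Cref{prop:dusart} to that product.
\item Convert $\log x^+$ into $\log\log n$ using \Cref{prop:schoenfeld}.
\end{enumerate}

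For step 1, let $q_1<\dots<q_r$ be the prime divisors of $n$. Their product is at most $n$, and the $i$th prime $p_i$ satisfies $p_i \le q_i$. Hence $\prod_{i\le r} p_i \le n$, so $r \le \pi(x)$ and $p_r \le x$. Since $t \mapsto t/(t-1)$ is decreasing, we get
\[
M(n) = \prod_{i \le r}\frac{q_i}{q_i-1} \le \prod_{i\le r}\frac{p_i}{p_i-1} \le \MertensProduct{x}.
\]
This already gives the bound with $x$ itself, which is cleaner than passing to $x^+$.

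We need $x \ge \MertensThreshold$ to invoke both propositions. This is where the hypothesis $\log n \ge \PrimorialLogThreshold$ enters. Presumably $\PrimorialLogThreshold$ is essentially $\theta$ evaluated at the prime preceding (or equal to) $\MertensThreshold$. Then $\theta(x^+) > \log n \ge \PrimorialLogThreshold$ forces $x^+ > $ that prime, so $x \ge \MertensThreshold$, up to checking that the constants line up exactly. \Cref{prop:dusart} then gives
\[
M(n) < e^\gamma \log x\left(1 + \frac{\MertensCorrection}{(\log x)^2}\right).
\]

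For step 3, we compare $\log x$ with $\log\log n$. By \Cref{prop:schoenfeld}, $\log n \ge \theta(x) > x\bigl(1 - 1/(8\log x)\bigr)$. Taking logarithms,
\[
\log x < \log\log n - \log\bigl(1 - 1/(8\log x)\bigr) \le \log\log n + \frac{1}{8\log x} + O\!\left(\frac{1}{(\log x)^2}\right).
\]
Multiplying through, the excess of $M(n)$ over $\RobinScale{n}$ is at most
\[
e^\gamma\left(\frac{1}{8\log x} + \frac{\MertensCorrection}{\log x} + \text{smaller}\right).
\]
Here $e^\gamma(0.125 + 0.1095) \approx 0.4177$, just under $\TransferCorrection$. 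This step is the main obstacle. We must replace $1/\log x$ by $1/\log\log n$, and since $\log x \le \log\log n + o(1)$ the direction goes the wrong way. I would handle it explicitly:
\begin{itemize}
\item Write $L = \log\log n$. Note $\log x \ge L - \epsilon$, using $\log n < \theta(x^+) \le 1.01624\,x^+$ and a short-interval prime bound $x^+ \le x(1 + \text{tiny})$ for $x \ge \MertensThreshold$.
\item Bound the total error $e^\gamma\bigl[(\log x)\,\MertensCorrection/(\log x)^2 + 1/(8\log x) + \text{quadratic remainder}\bigr]$ by $\TransferCorrection/L$.
\end{itemize}
Since $L \ge \log \PrimorialLogThreshold \approx 9.86$, the available slack is $(0.42 - 0.4177)/L$. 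This must absorb the $O(1/L^2)$ terms, which are about $e^\gamma(1/128 + \dots)/L^2$ plus the $\log x$ versus $L$ discrepancy. That appears feasible but tight: $0.0023 \cdot L \approx 0.023$ against roughly $0.014$. I would verify it with a careful monotone-in-$L$ inequality, checked at the smallest admissible $L$ and shown to improve as $L$ grows.
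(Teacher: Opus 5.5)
Your Steps 1 and 2 coincide with the paper's proof. With $x = \calP^*(n)$, the paper gets $x \ge \MertensThreshold$ from the verified inequality $\theta(\MertensThreshold) < \PrimorialLogThreshold$. This is the precise form of the relation you guessed, and it gives $\primorial{\MertensThreshold} \le n$ directly, without $x^+$. The paper then bounds $M(n) \le \MertensProduct{x}$ by the same prime-by-prime comparison you use and applies \Cref{prop:dusart}.

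The divergence is in Step 3, and the obstacle you find there does not actually arise. The right-hand side $z \mapsto e^\gamma z + \TransferCorrection/z$ is increasing for $z > (\TransferCorrection\,e^{-\gamma})^{1/2} \approx 0.49$, so only a \emph{lower} bound on $\log\log n$ is needed. The paper puts $L = \log x$ and uses \Cref{prop:schoenfeld} to get $\log\log n \ge L + \log(1 - 1/(8L)) > y \coloneqq L - 1/(7.92L)$. It evaluates the right-hand side at $y$ and replaces $\TransferCorrection/y$ by the smaller $\TransferCorrection/L$. What remains is the single check $e^\gamma(\MertensCorrection + 1/7.92) \approx 0.41995 < \TransferCorrection$. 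No upper bound for $\theta$ and no prime-gap estimate enter.

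Your version bounds $e^\gamma\log\log n$ using the smallest admissible value of $\log\log n$, but bounds $\TransferCorrection/\log\log n$ using the largest. These two extremes cannot occur together. The route can be made to close, but it needs inputs the paper never uses, and it is tighter than your ``$0.023$ against $0.014$'' suggests.

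Write $\log\log n \le L + \epsilon$. The discrepancy then costs about $0.42\epsilon$ in your units of $1/L^2$, so the real comparison is nearer $0.022$ against $0.023$. At $x = \MertensThreshold$ you need $\epsilon$ below roughly $0.021$, and $\log 1.01624 \approx 0.0161$ of that is already spent on $\theta(t) < 1.01624t$. That leaves $x^+/x - 1 \le 0.0047$. A generic explicit gap bound such as $x^+ \le x\bigl(1 + 1/(2\log^2 x)\bigr)$ gives only about $0.0051$ there, which is not enough. You would need tabulated prime gaps or the sharper fact $\theta(t) < t$ in this range. The monotonicity observation makes all of this unnecessary.
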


\begin{proof}
Put $x = \calP^*(n)$.  Then $x \geq \MertensThreshold$, because
$\theta(\MertensThreshold) < \PrimorialLogThreshold$, and
$\theta(x) \leq \log n$.  Moreover $\omega(n) \leq \pi(x)$: otherwise the
product of the distinct prime divisors of $n$ would be at least the product of
the first $\pi(x) + 1$ primes, which exceeds $n$.  Since $p / (p - 1)$ decreases
with $p$, it follows that
\[
M(n) \leq \MertensProduct{x}.
\]
By \eqref{eq:mertens-explicit-extension} and \eqref{eq:schoenfeld-lower}, we have
\begin{equation}\label{eq:mertens-transfer-prebounds}
M(n) < e^\gamma L\left(1 + \frac{\MertensCorrection}{L^2}\right),
\qquad
\log\log n \geq L + \log\left(1 - \frac1{8L}\right)
> L - \frac1{7.92L},
\end{equation}
where $L \coloneqq \log x$.  The lower bound for $\log\log n$ in
\eqref{eq:mertens-transfer-prebounds} follows by putting
\[
h(L) \coloneqq \log\left(1 - \frac1{8L}\right) + \frac1{7.92L}.
\]
Its derivative has the sign of $1 - 0.08L$.  Thus $h$ increases up to $L = 12.5$ and then decreases to $0$.  At the left endpoint, we have
\[
h(\log\MertensThreshold) > 4.70 \times 10^{-5},
\]
and hence $h(L) > 0$ for every $L \geq \log\MertensThreshold$.

We put $y \coloneqq L - 1 / (7.92L)$.  Since $\log\log n \geq y$ and the function $z \mapsto e^\gamma z + \TransferCorrection / z$ is increasing for $z \geq \log \MertensThreshold - 1 / (7.92\log\MertensThreshold)$, it is enough to prove
\begin{equation}\label{eq:mertens-transfer-target}
e^\gamma L\left(1 + \frac{\MertensCorrection}{L^2}\right)
< e^\gamma y + \frac{\TransferCorrection}{y}.
\end{equation}
A stronger sufficient condition for \eqref{eq:mertens-transfer-target} is
obtained by replacing $\TransferCorrection / y$ with the smaller quantity
$\TransferCorrection / L$.  After cancellation, the sufficient condition for
\eqref{eq:mertens-transfer-target} becomes
\begin{equation}\label{eq:mertens-transfer-numerical}
e^\gamma\left(\MertensCorrection + \frac1{7.92}\right) < \TransferCorrection.
\end{equation}
The left-hand side of \eqref{eq:mertens-transfer-numerical} is
$0.4199459\ldots$.
\end{proof}

The threshold in \Cref{thm:unconditional-bound} uses the finite Mertens extension in
\Cref{prop:dusart}, together with the verified inequality
$\theta(\MertensThreshold) < \PrimorialLogThreshold$ for the primorial threshold.

\begin{theorem}\label{thm:unconditional-bound}
For every real $\kappa > 1$ and each $n \ge e^{\PrimorialLogThreshold}$, we have
\begin{equation}
\rksigma{\kappa}(n)
< \frac{\kappa^{\omega(n)}}n
\left(\RobinScale{n} + \frac{\TransferCorrection}{\log\log n}\right)^{\kappa - 1}.
\end{equation}
\end{theorem}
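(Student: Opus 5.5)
The plan is to combine the global upper bound of \Cref{cor:global-upper-bound} with the Mertens transfer of \Cref{prop:mertens-transfer}, exactly parallel to how \Cref{cor:robin-derived-upper-bound} was obtained from Robin's unconditional bound.

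First, for $n \ge e^{\PrimorialLogThreshold}$ we certainly have $n > 1$. \Cref{cor:global-upper-bound} then gives
\[
\rksigma{\kappa}(n) \le \frac{\kappa^{\omega(n)}}{n}\,\sigma_{-1}(n)^{\kappa - 1}.
\]
Next, by \Cref{lem:local-mertens-factor} and multiplicativity, we have the strict inequality $\sigma_{-1}(n) < M(n)$ for $n > 1$. Combining this with \Cref{prop:mertens-transfer} yields
\[
0 < \sigma_{-1}(n) < M(n) < \RobinScale{n} + \frac{\TransferCorrection}{\log\log n}.
\]

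Since $\kappa - 1 > 0$, the map $t \mapsto t^{\kappa - 1}$ is strictly increasing on $(0,\infty)$. We may therefore raise the chain above to the power $\kappa - 1$ and preserve strictness. Multiplying by the positive factor $\kappa^{\omega(n)}/n$ then gives the claimed strict inequality.

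The only points needing care are that all quantities are positive and that strictness survives. Positivity is clear: $\log\log n > 0$ in this range, so the right-hand side is positive. Strictness is already supplied by $\sigma_{-1}(n) < M(n)$, and also independently by \Cref{prop:mertens-transfer}. I expect no real obstacle, since all the analytic content sits in \Cref{prop:mertens-transfer}.
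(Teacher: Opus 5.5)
Your proposal is correct and follows the paper's proof. The paper likewise chains \Cref{cor:global-upper-bound}, the comparison $\sigma_{-1}(n) \le M(n)$ from \Cref{lem:local-mertens-factor}, and \Cref{prop:mertens-transfer}, then raises the resulting inequality to the positive power $\kappa - 1$.
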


Combining \eqref{eq:global-upper-bound}, \eqref{eq:local-mertens-factor}, and \eqref{eq:mertens-transfer} gives \Cref{thm:unconditional-bound}.

\begin{proof}[Proof of \Cref{thm:unconditional-bound}]
By \eqref{eq:global-upper-bound} and \eqref{eq:local-mertens-factor}, we have
\begin{equation}
\rksigma{\kappa}(n) \; \leq \; \frac{\kappa^{\omega(n)}}{n}\, \sigma_{-1}(n)^{\kappa - 1} \; \leq \; \frac{\kappa^{\omega(n)}}{n}\, M(n)^{\kappa - 1}.
\end{equation}
We now apply \eqref{eq:mertens-transfer} and raise it to the positive power $\kappa - 1$.
\end{proof}

\section{Parameter monotonicity and the reciprocal Robin criterion}\label{sec:parameter-monotonicity}

\Cref{sec:order-theory} places each fixed parameter on the classical Robin
scale.  At a prime power, the normalized local factor is a power mean.
The representation in \Cref{lem:power-mean} orders the family, transfers Robin's inequality, propagates
violations upward, and recovers the classical criterion along any unbounded
parameter set.

\begin{lemma}[{\Cref{thm:intro-robin-criterion}}]\label{lem:power-mean}
For each integer $n > 1$, the function $\kappa \mapsto \rkrho{\kappa}(n)$ is
strictly increasing on $(1, \infty)$, with limit
\[
\lim_{\kappa \to \infty}\rkrho{\kappa}(n) = \sigma_{-1}(n).
\]
In particular, $\rkrho{\kappa}(n) < \sigma_{-1}(n)$ for every finite
$\kappa > 1$.
\end{lemma}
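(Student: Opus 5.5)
We reduce to a single prime power by multiplicativity.  Since $\rksigma{\kappa}$ is multiplicative and $\kappa^{\omega(n)}$, $n$ factor over $p^a \parallel n$, we have
\[
\rkrho{\kappa}(n) = \prod_{p^a \parallel n} \left(\frac{p^a\,\rksigma{\kappa}(p^a)}{\kappa}\right)^{1/(\kappa - 1)}.
\]
It therefore suffices to show that each local factor is strictly increasing in $\kappa$ and tends to $\sigma_{-1}(p^a)$.  Strict monotonicity of a product of positive factors, each strictly increasing, then follows; $n > 1$ guarantees at least one factor.

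Fix $p^a$ and put $u = \sigma_{-1}(p^{a-1})$ and $v = \sigma_{-1}(p^a)$, so $1 \le u < v$ and $v - u = p^{-a}$.  By \eqref{eq:rksigma-at-prime-powers},
\[
\frac{p^a\,\rksigma{\kappa}(p^a)}{\kappa} = \frac{v^\kappa - u^\kappa}{\kappa(v - u)} = \int_0^1 \bigl((1-t)u + tv\bigr)^{\kappa - 1}\,dt.
\]
Writing $X = (1-t)u + tv$ with $t$ uniform on $[0,1]$, the local factor is $\bigl(\mathbb E[X^{\kappa-1}]\bigr)^{1/(\kappa-1)}$.  This is the power mean $M_{r}(X)$ with $r = \kappa - 1 > 0$.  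Equivalently, dividing by $v$, it is $v$ times the $r$-th power mean of $X/v \in [u/v, 1]$, which is the representation \eqref{eq:local-power-mean-representation} alluded to in the introduction.

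The power mean inequality (Jensen applied to the convex map $y \mapsto y^{s/r}$ for $s > r$) gives $M_r(X) \le M_s(X)$.  Equality holds only when $X$ is almost surely constant, and that is excluded because $u < v$.  So each factor is strictly increasing in $\kappa$.

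For the limit, we use $M_r(X) \to \operatorname{ess\,sup} X = v$ as $r \to \infty$.  Concretely, $M_r(X) \le v$ trivially.  For a lower bound, for any $\eta \in (0,1)$ the set where $X \ge v - \eta(v-u)$ has measure $\eta$, so $M_r(X) \ge \eta^{1/r}\bigl(v - \eta(v-u)\bigr)$.  Letting $r \to \infty$ and then $\eta \to 0$ gives the limit $v$.  Multiplying the local limits yields $\sigma_{-1}(n)$.

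The final claim $\rkrho{\kappa}(n) < \sigma_{-1}(n)$ follows because a strictly increasing function lies strictly below its limit.  Alternatively, each local factor is $M_r(X) < v$ strictly, since $X < v$ on a set of positive measure.

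The main obstacle is only the bookkeeping of strictness in the power mean inequality.  It rests on nondegeneracy of $X$, which is exactly $u < v$, i.e.\ $a \ge 1$.
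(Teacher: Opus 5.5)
Your proof is correct and follows the paper's argument. You reduce to prime powers by multiplicativity, write the local factor as the power mean of order $\kappa - 1$ of a uniform variable on a nondegenerate interval, and conclude via strict power-mean monotonicity and convergence to the essential supremum; the only cosmetic differences are that you work on $[\sigma_{-1}(p^{a-1}), \sigma_{-1}(p^a)]$ instead of normalizing to $[u, 1]$, and that you prove the limit explicitly rather than citing it.
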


\begin{proof}
By multiplicativity, it suffices to treat a prime power $p^a$, and we write
\[
S \coloneqq \sigma_{-1}(p^a),
\qquad
u \coloneqq \frac{\sigma_{-1}(p^{a - 1})}{\sigma_{-1}(p^a)} \in (0, 1).
\]
Since $S - \sigma_{-1}(p^{a - 1}) = p^{-a}$, \eqref{eq:rksigma-at-prime-powers} and \Cref{def:normalized-reciprocal-lcm-power} give
\begin{equation}\label{eq:local-power-mean-representation}
\begin{aligned}
\frac{\rkrho{\kappa}(p^a)}{S}
& = \left(\frac{1 - u^\kappa}{\kappa(1 - u)}\right)^{1 / (\kappa - 1)} \\
& = \left(\frac1{1 - u}\int_u^1 t^{\kappa - 1}\,dt\right)^{1 / (\kappa - 1)}.
\end{aligned}
\end{equation}
The right-hand side of \eqref{eq:local-power-mean-representation} is the power mean of order $\kappa - 1$ of the nonconstant function
$t$ under normalized Lebesgue measure on $[u, 1]$.  Strict power-mean
monotonicity makes it strictly increasing in $\kappa$, and its limit is
the essential supremum of $t$, namely $1$.  Thus
\[
\rkrho{\kappa}(p^a) \nearrow \sigma_{-1}(p^a).
\]
Multiplication over $p^a \parallel n$ proves \Cref{lem:power-mean}.
\end{proof}

The strict local comparison first transfers Robin's inequality to each
finite parameter.

\begin{theorem}\label{thm:conditional-robin}
Assume the Riemann hypothesis.  Then, for every real $\kappa > 1$ and every
$n > 5040$, we have
\begin{equation}
\rksigma{\kappa}(n)
< \frac{\kappa^{\omega(n)}}n\RobinScalePower{n}{\kappa - 1},
\end{equation}
or equivalently $\rkrho{\kappa}(n) < \RobinScale{n}$.
\end{theorem}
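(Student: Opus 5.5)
The plan is to chain \Cref{lem:power-mean} with the classical Robin criterion, \Cref{thm:classical-robin}. Assume the Riemann hypothesis and fix $n > 5040$ and a real $\kappa > 1$.

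\begin{enumerate}
\item By \Cref{thm:classical-robin}, we have $\sigma_{-1}(n) < \RobinScale{n}$.
\item Since $n > 1$, \Cref{lem:power-mean} gives the strict inequality $\rkrho{\kappa}(n) < \sigma_{-1}(n)$. Combining with the first step yields $\rkrho{\kappa}(n) < \RobinScale{n}$. Strictness is not even needed here, since the Robin inequality is already strict. The weaker bound $\rkrho{\kappa}(n) \le \sigma_{-1}(n)$ from \Cref{cor:global-upper-bound} would suffice.
\item Translate back to $\rksigma{\kappa}$. By \Cref{def:normalized-reciprocal-lcm-power}, $\rkrho{\kappa}(n)$ is the $(\kappa-1)$st root of the positive quantity $n\,\rksigma{\kappa}(n)/\kappa^{\omega(n)}$; positivity follows from \Cref{cor:global-lower-bound}. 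For $n > 5040$ we have $\log\log n > 0$, so both sides of $\rkrho{\kappa}(n) < \RobinScale{n}$ are positive. The map $t \mapsto t^{\kappa - 1}$ is strictly increasing on $(0,\infty)$, so raising both sides to the power $\kappa - 1$ preserves the strict inequality. Multiplying by $\kappa^{\omega(n)}/n > 0$ then gives the displayed bound on $\rksigma{\kappa}(n)$. The same steps reversed show the two forms are equivalent.
\end{enumerate}

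There is no real obstacle. The only point needing care is the positivity bookkeeping in the third step, so that taking roots and powers is legitimate.
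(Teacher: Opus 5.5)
Your proof is correct and matches the paper's argument: both chain the strict comparison $\rkrho{\kappa}(n) < \sigma_{-1}(n)$ from \Cref{lem:power-mean} with Robin's inequality, which holds under RH for $n > 5040$. Your positivity bookkeeping for the equivalence of the two forms is sound, as is your remark that the non-strict bound from \Cref{cor:global-upper-bound} already suffices; the paper leaves both points implicit.
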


\begin{proof}
By \Cref{lem:power-mean},
$\rkrho{\kappa}(n) < \sigma_{-1}(n)$ for finite $\kappa > 1$.  Under the
Riemann hypothesis, \eqref{eq:classical-robin} gives
$\sigma_{-1}(n) < \RobinScale{n}$ for $n > 5040$.  Combining \Cref{lem:power-mean} with \eqref{eq:classical-robin} proves
\Cref{thm:conditional-robin}.
\end{proof}

Conversely, a violation cannot disappear farther up the family.

\begin{corollary}\label{cor:counterexamples-propagate}
If $\rkrho{\kappa_0}(n_0) \geq \RobinScale{n_0}$ for some
$\kappa_0 > 1$ and $n_0 > 1$, then
$\rkrho{\kappa}(n_0) \geq \RobinScale{n_0}$ for every $\kappa \geq \kappa_0$.
\end{corollary}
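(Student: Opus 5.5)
This follows at once from the monotonicity in \Cref{lem:power-mean}, applied at the fixed integer $n_0$. Since $n_0 > 1$, that lemma says $\kappa \mapsto \rkrho{\kappa}(n_0)$ is strictly increasing on $(1, \infty)$. Hence for every $\kappa \geq \kappa_0$ we have
\[
\rkrho{\kappa}(n_0) \geq \rkrho{\kappa_0}(n_0) \geq \RobinScale{n_0},
\]
with the first inequality strict when $\kappa > \kappa_0$ and an equality when $\kappa = \kappa_0$.

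The right-hand side $\RobinScale{n_0}$ does not depend on $\kappa$, so nothing else has to be tracked. The only point needing care is that $\rkrho{\kappa}(n_0)$ is a well-defined positive real number for every $\kappa > 1$, so that the comparison makes sense. This holds because each local factor $\rksigma{\kappa}(p^a)$ is positive by \Cref{lem:discrete-mvt}, so the $(\kappa - 1)$st root in \Cref{def:normalized-reciprocal-lcm-power} is taken of a positive quantity.

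The hypothesis $n_0 > 1$ is what allows \Cref{lem:power-mean} to be invoked. For $n_0 = 1$ the function is constant in $\kappa$, although the conclusion would still hold trivially. I do not expect any real obstacle: the corollary is just the contrapositive form of the monotonicity statement.
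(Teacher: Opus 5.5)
Your proposal is correct and is the same argument as the paper's one-line proof: strict monotonicity of $\kappa \mapsto \rkrho{\kappa}(n_0)$ from \Cref{lem:power-mean}, combined with the fact that $\RobinScale{n_0}$ does not depend on $\kappa$. Your positivity check via \Cref{lem:discrete-mvt} is a harmless extra detail that the paper leaves implicit.
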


\begin{proof}
The conclusion follows from the monotonicity in \Cref{lem:power-mean}.
\end{proof}


\subsection{The family-wide criterion}

No finite parameter reaches $\sigma_{-1}$, so \Cref{thm:conditional-robin} does
not by itself recover the Riemann hypothesis.  Equivalence returns when
$\kappa$ can grow without bound, and every unbounded parameter set is
sufficient.

\begin{theorem}[{\Cref{thm:intro-robin-criterion}}]\label{thm:reciprocal-robin-criterion}
For an unbounded subset $\mathcal K \subset (1, \infty)$, the following assertions are equivalent.
\begin{enumerate}
\renewcommand{\theenumi}{(\roman{enumi})}
\renewcommand{\labelenumi}{\textup{\theenumi}}
\item\label{item:criterion-rh} The Riemann hypothesis holds.
\item\label{item:criterion-all-kappa} For each integer $n > 5040$ and every real $\kappa > 1$, we have
\[
\rkrho{\kappa}(n) < \RobinScale{n}.
\]
\item\label{item:criterion-unbounded-set} For each integer $n > 5040$ and every $\kappa \in \mathcal K$, we have
\[
\rkrho{\kappa}(n) < \RobinScale{n}.
\]
\item\label{item:criterion-eventual} There is an integer $n_0$ such that, for each integer $n > n_0$ and every $\kappa \in \mathcal K$, we have
\[
\rkrho{\kappa}(n) < \RobinScale{n}.
\]
\end{enumerate}
In particular, one may take $\mathcal K = \{2, 3, 4, \ldots\}$.
\end{theorem}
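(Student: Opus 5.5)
We prove the cycle of implications \ref{item:criterion-rh} $\Rightarrow$ \ref{item:criterion-all-kappa} $\Rightarrow$ \ref{item:criterion-unbounded-set} $\Rightarrow$ \ref{item:criterion-eventual} $\Rightarrow$ \ref{item:criterion-rh}. The first implication is exactly \Cref{thm:conditional-robin}. The next two are formal: restricting $\kappa$ to $\mathcal K \subset (1, \infty)$ gives \ref{item:criterion-unbounded-set}, and taking $n_0 = 5040$ gives \ref{item:criterion-eventual}. The final remark follows because $\{2, 3, 4, \ldots\}$ is unbounded.

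The substance is \ref{item:criterion-eventual} $\Rightarrow$ \ref{item:criterion-rh}. Fix $n > n_0$. Since $\mathcal K$ is unbounded, we may choose $\kappa_j \in \mathcal K$ with $\kappa_j \to \infty$. Hypothesis \ref{item:criterion-eventual} gives $\rkrho{\kappa_j}(n) < \RobinScale{n}$ for each $j$. The limit statement in \Cref{lem:power-mean} then yields
\[
\sigma_{-1}(n) = \lim_{j \to \infty}\rkrho{\kappa_j}(n) \leq \RobinScale{n}.
\]
Thus we obtain the non-strict Robin inequality for every $n > n_0$.

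It remains to pass from this eventual non-strict inequality to the Riemann hypothesis, and we expect this to be the main point needing care. We invoke Robin's converse direction (\cite[Th\'eor\`eme~2]{Robin}): if RH fails, then there is some $b$ with $1/2 < b < 1$ such that
\[
\sigma_{-1}(n) \geq e^\gamma\log\log n\,\bigl(1 + \Omega_+((\log n)^{-b})\bigr).
\]
In particular, $\sigma_{-1}(n) > \RobinScale{n}$ for infinitely many $n$. This contradicts the non-strict inequality for all $n > n_0$. Hence RH holds.

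If one prefers to cite only the form of \Cref{thm:classical-robin} stated in the paper, which gives strict inequality for all $n > 5040$, this is not enough on its own. The contrapositive from that statement alone gives only a single violation, which might lie in $(5040, n_0]$. So we will quote Robin's stronger conclusion that RH failure yields infinitely many strict violations, with the violation exceeding $\RobinScale{n}$ by a positive factor. That factor also disposes of the strict-versus-non-strict issue created by taking the limit.

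Alternatively, if only a single violation $n_1 > 5040$ of \eqref{eq:classical-robin} is available, we can handle \ref{item:criterion-unbounded-set} $\Rightarrow$ \ref{item:criterion-rh} directly. If $\sigma_{-1}(n_1) > \RobinScale{n_1}$, the limit gives $\rkrho{\kappa}(n_1) > \RobinScale{n_1}$ for large $\kappa \in \mathcal K$, contradicting \ref{item:criterion-unbounded-set}. The equality case $\sigma_{-1}(n_1) = \RobinScale{n_1}$ is excluded because $e^\gamma$ is not of the required form; if needed, this is again covered by citing Robin's infinitely-many-violations statement.
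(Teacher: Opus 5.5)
Your proposal is correct and is essentially the paper's proof: (i)$\Rightarrow$(ii) via \Cref{thm:conditional-robin}, formal middle implications, and (iv)$\Rightarrow$(i) from Robin's result that failure of RH gives infinitely many strict violations $\sigma_{-1}(n) > \RobinScale{n}$, combined with the limit in \Cref{lem:power-mean}. The only difference is direction: the paper pushes each violation up to a large $\kappa \in \mathcal K$, whereas you pass to the limit in the hypothesis. Two minor side points. Robin's Th\'eor\`eme~2 has $0 < b < 1/2$, not $1/2 < b < 1$. Your claim that equality is excluded because $e^\gamma$ ``is not of the required form'' is unjustified. Neither affects the argument, since you rely only on the strict infinitely-many-violations statement.
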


\begin{proof}
The implication \ref{item:criterion-rh}$\Rightarrow$\ref{item:criterion-all-kappa} is
\Cref{thm:conditional-robin}, while
\ref{item:criterion-all-kappa}$\Rightarrow$\ref{item:criterion-unbounded-set}$\Rightarrow$\ref{item:criterion-eventual} is
immediate.

We suppose that the Riemann hypothesis is false.  Robin proved that
\[
\sigma_{-1}(n) > \RobinScale{n}
\]
for infinitely many integers $n$ \cite[\S4]{Robin}.  We fix one such
integer.  By \Cref{lem:power-mean}, we have
\[
\rkrho{\kappa}(n) \nearrow \sigma_{-1}(n)
\qquad\text{as }\kappa \to \infty,
\]
so $\rkrho{\kappa}(n) > \RobinScale{n}$ for every $\kappa$ beyond some
threshold $K_n$.  Since $\mathcal K$ is unbounded, it contains such a
parameter.  The parameter may depend on $n$.  Nevertheless, the infinitely many
classical violations contradict \ref{item:criterion-eventual} for each $n_0$.
Hence \ref{item:criterion-eventual} implies \ref{item:criterion-rh}.
\end{proof}

\section{Unconditional results at bounded parameters}\label{sec:unconditional-results}

\Cref{thm:reciprocal-robin-criterion} requires an unbounded parameter set.
\Cref{thm:bounded-eventual} shows why: every bounded parameter range is
eventually good unconditionally.  A special feature at $\kappa = 5$ gives a
shorter direct argument for that endpoint; we record it in
\Cref{rem:kappa-five-square-support}.  The unnormalized reciprocal extremizers
do not control the Robin quotient because of a residual factor depending on
$\omega(n)$; \Cref{app:unnormalized-extremal} explains that obstruction separately.

\subsection{Normalized extremizers and endpoint transfer}

We now turn from parameter monotonicity to the extremal problem governing a fixed bounded parameter range.
Fix a real $\kappa_0 > 1$ and put
\[
Q_{\kappa_0}(n) \coloneqq \rkrho{\kappa_0}(n).
\]
We optimize the normalized function that occurs in \eqref{eq:intro-reciprocal-robin}.

\begin{definition}\label{def:normalized-colossally-abundant}
For $\epsilon > 0$, an integer $N$ is \defi{normalized
$\kappa_0$-colossally abundant for parameter $\epsilon$} if
\begin{equation}\label{eq:normalized-ca-definition}
\frac{Q_{\kappa_0}(n)}{n^\epsilon} \leq \frac{Q_{\kappa_0}(N)}{N^\epsilon}
\qquad\text{for }n \geq 1.
\end{equation}
\end{definition}

For a prime $p$ and an integer $a \geq 0$, we write
\[
\mathcal L_{p, \epsilon}(a)
\coloneqq \log Q_{\kappa_0}(p^a) - a\epsilon\log p.
\]
Because $Q_{\kappa_0}$ is positive and multiplicative, the optimization separates
prime by prime.  \Cref{lem:canonical-endpoints} fixes the larger choice at a tie and orders
the resulting extremizers.

\begin{lemma}\label{lem:canonical-endpoints}
For every $\epsilon > 0$ and every prime $p$, let $a_p(\epsilon)$ be the
largest nonnegative integer maximizing $\mathcal L_{p, \epsilon}(a)$, and put
\[
\mathcal N_{\kappa_0}(\epsilon) \coloneqq \prod_p p^{a_p(\epsilon)}.
\]
Then the product is finite, and $\mathcal N_{\kappa_0}(\epsilon)$ is the largest normalized
$\kappa_0$-colossally abundant integer for parameter $\epsilon$.  Moreover:
\begin{enumerate}
\item if $0 < \epsilon_2 < \epsilon_1$, then
$a_p(\epsilon_2) \geq a_p(\epsilon_1)$ for each prime $p$, and hence
$\mathcal N_{\kappa_0}(\epsilon_2) \geq \mathcal N_{\kappa_0}(\epsilon_1)$;
\item\label{item:canonical-endpoints-local-finiteness} on every compact subinterval of $(0, \infty)$, the function
$\mathcal N_{\kappa_0}(\epsilon)$ takes only finitely many values;
\item $\mathcal N_{\kappa_0}(\epsilon) \to \infty$ as $\epsilon \to 0^+$.
\end{enumerate}
If $N < N'$ are consecutive distinct values of $\mathcal N_{\kappa_0}(\epsilon)$, then some
parameter $\epsilon_0 > 0$ supports both: each maximizes
$Q_{\kappa_0}(n)n^{-\epsilon_0}$ over all $n \geq 1$.
\end{lemma}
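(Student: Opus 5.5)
The plan is to exploit the local structure of $Q_{\kappa_0}(p^a)$ coming from \Cref{lem:power-mean} and \Cref{lem:local-mertens-factor}. The basic estimate is that for every $a \ge 0$
\[
1 \le Q_{\kappa_0}(p^a) \le \sigma_{-1}(p^a) < \frac{p}{p-1},
\]
where the lower bound $Q_{\kappa_0}(p^a) \ge \sigma_{-1}(p^{a-1}) \ge 1$ for $a \ge 1$ follows from \Cref{cor:global-lower-bound}. Consequently
\[
\mathcal L_{p,\epsilon}(a) \le \log\frac{p}{p-1} - a\epsilon\log p \to -\infty
\quad\text{as } a \to \infty,
\]
so the maximum over $a$ is attained on a finite set and $a_p(\epsilon)$ is well defined.

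\textbf{Finiteness.} For $p$ large relative to $\epsilon$, namely when
\[
\log\frac{p}{p-1} < \epsilon\log p,
\]
every $a \ge 1$ gives $\mathcal L_{p,\epsilon}(a) < 0 = \mathcal L_{p,\epsilon}(0)$. Hence $a_p(\epsilon) = 0$ for all but finitely many $p$, and the product defining $\mathcal N_{\kappa_0}(\epsilon)$ is finite.

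\textbf{Largest colossally abundant integer.} By multiplicativity,
\[
\log\bigl(Q_{\kappa_0}(n)n^{-\epsilon}\bigr) = \sum_p \mathcal L_{p,\epsilon}(v_p(n)).
\]
So $N$ is normalized colossally abundant exactly when each $v_p(N)$ maximizes $\mathcal L_{p,\epsilon}$. Every such $N$ has $v_p(N) \le a_p(\epsilon)$ for each $p$, and therefore divides, and is at most, $\mathcal N_{\kappa_0}(\epsilon)$.

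\textbf{Monotonicity (1).} This is a standard exchange argument. Write $a_i = a_p(\epsilon_i)$ and suppose $a_2 < a_1$. Then
\[
\mathcal L_{p,\epsilon_2}(a_1) - \mathcal L_{p,\epsilon_2}(a_2)
= \mathcal L_{p,\epsilon_1}(a_1) - \mathcal L_{p,\epsilon_1}(a_2) + (a_1 - a_2)(\epsilon_1 - \epsilon_2)\log p > 0,
\]
because the first difference on the right is nonnegative by maximality of $a_1$. This contradicts the maximality of $a_2$. Since the exponents only grow as $\epsilon$ decreases, so does the product.

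\textbf{Local finiteness (2).} On $[\alpha,\beta] \subset (0,\infty)$, monotonicity gives $a_p(\epsilon) \le a_p(\alpha)$ for every $p$. Every value is therefore a divisor of $\mathcal N_{\kappa_0}(\alpha)$, which leaves finitely many possibilities.

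\textbf{Unboundedness (3).} Fix $p$ and $a$. As $\epsilon \to 0^+$, $\mathcal L_{p,\epsilon}(b) \to \log Q_{\kappa_0}(p^b)$ for every $b$. The values $Q_{\kappa_0}(p^b)$ are strictly increasing in $b$; this must be checked. The local factor equals $S$ times a power mean of $t$ on $[u,1]$, and both $S$ and $u = 1 - p^{-a}/S$ increase with $a$, so the factor increases. Because $Q_{\kappa_0}(p^b)$ increases strictly, for small $\epsilon$ the exponent $a_p(\epsilon)$ eventually exceeds $a$. In particular $a_2(\epsilon) \to \infty$, and hence $\mathcal N_{\kappa_0}(\epsilon) \to \infty$.

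\textbf{Common support parameter.} The main obstacle is the final assertion. I would argue via semicontinuity. Each map $\epsilon \mapsto a_p(\epsilon)$ is nonincreasing and right-continuous in the appropriate sense. The largest maximizer persists under limits from the left, since as $\epsilon$ increases, ties resolve toward the larger exponent at the jump point. So $\mathcal N_{\kappa_0}$ is a nonincreasing step function that is left-continuous: at a jump point $\epsilon_0$ it takes the larger value.

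Now let $N < N'$ be consecutive values. Put
\[
\epsilon_0 \coloneqq \inf\{\epsilon : \mathcal N_{\kappa_0}(\epsilon) = N\}.
\]
By local finiteness this infimum is attained as a jump point, where $\mathcal N_{\kappa_0}(\epsilon_0) = N'$ and $N$ is the value just to the right of $\epsilon_0$.

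For each $p$, $\mathcal L_{p,\epsilon}(a_p(\epsilon))$ is a maximum of finitely many relevant affine functions of $\epsilon$, so it is continuous. Letting $\epsilon \downarrow \epsilon_0$ with the exponent $a_p(\epsilon)$ fixed at its right-hand value, continuity shows that $v_p(N)$ still maximizes $\mathcal L_{p,\epsilon_0}$. Therefore $N$ also attains the supremum of $Q_{\kappa_0}(n)n^{-\epsilon_0}$, while $N'$ attains it by the largest-maximizer property. Both $N$ and $N'$ are supported by $\epsilon_0$.
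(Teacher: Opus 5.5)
Your proof is correct, and its skeleton matches the paper's. Both use the bound $Q_{\kappa_0}(p^a) < p/(p-1)$ for existence and finiteness of the local maximizers, prime-by-prime separation for the largest global maximizer, and the same exchange inequality for item (1).

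You depart from the paper in two sub-steps. For item (2), the paper bounds the primes and exponents that can occur on $[\delta, M]$ and counts crossings of finitely many affine functions $\mathcal L_{p,\epsilon}(a)$. You instead observe that item (1) already forces $\mathcal N_{\kappa_0}(\epsilon) \mid \mathcal N_{\kappa_0}(\alpha)$ for every $\epsilon \geq \alpha$. This is shorter, and it even gives finitely many values on all of $[\alpha, \infty)$.

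For item (3), the paper needs only $Q_{\kappa_0}(p) > 1$ and concludes that every fixed prime eventually divides $\mathcal N_{\kappa_0}(\epsilon)$, so the prime support grows. You instead let the exponent at a single prime grow. That costs the extra fact that $b \mapsto Q_{\kappa_0}(p^b)$ is strictly increasing. Your $S$, $u$ representation covers only $b \geq 1$, but combined with your bound $Q_{\kappa_0}(p) \geq 1$ it suffices.

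For the final assertion, your left-continuity argument is a correct expansion of the paper's one-line remark that local finiteness and continuity force a tie at the transition. Two small corrections there: $a_p$ is left-continuous, not right-continuous as you first write. Also, $\epsilon_0 > 0$ should be stated; it holds because $N'$ is attained at some positive parameter not exceeding $\epsilon_0$.
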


\begin{proof}
For fixed $p$ and $\epsilon$, \Cref{lem:power-mean} gives
$Q_{\kappa_0}(p^a) < p / (p - 1)$, and therefore
\begin{equation}\label{eq:canonical-local-decay}
Q_{\kappa_0}(p^a)p^{-a\epsilon} < \frac p{p - 1}p^{-a\epsilon} \longrightarrow 0
\qquad\text{as }a \to \infty.
\end{equation}
Thus a largest local maximizer exists.  For all sufficiently large $p$,
\eqref{eq:canonical-local-decay} makes every positive exponent worse than exponent zero.  Hence
$a_p(\epsilon) = 0$ for all but finitely many primes, and multiplicativity makes
$\mathcal N_{\kappa_0}(\epsilon)$ the largest global maximizer.

We let $\epsilon_1 > \epsilon_2 > 0$ and abbreviate
$a_i = a_p(\epsilon_i)$.  The two maximizing inequalities are
\begin{equation}\label{eq:canonical-monotonicity-maxima}
\mathcal L_{p, \epsilon_1}(a_1) \geq \mathcal L_{p, \epsilon_1}(a_2),
\qquad
\mathcal L_{p, \epsilon_2}(a_2) \geq \mathcal L_{p, \epsilon_2}(a_1).
\end{equation}
Adding \eqref{eq:canonical-monotonicity-maxima} componentwise yields
\[
(a_2 - a_1)(\epsilon_1 - \epsilon_2)\log p \geq 0,
\]
so $a_2 \geq a_1$.

We fix $0 < \delta < M$.  If $\epsilon \in [\delta, M]$ and
$a_p(\epsilon) > 0$, then
\begin{equation}\label{eq:canonical-compact-prime-bound}
1 \leq Q_{\kappa_0}(p^{a_p(\epsilon)})p^{-a_p(\epsilon)\epsilon}
< \frac p{p - 1}p^{-\delta}.
\end{equation}
By \eqref{eq:canonical-compact-prime-bound}, only finitely many primes can occur.  For each of those primes,
the argument leading to \eqref{eq:canonical-compact-prime-bound}, with $a\delta$ in the exponent, bounds every locally
maximizing exponent.  Only finitely many affine functions
$\mathcal L_{p, \epsilon}(a)$ can therefore participate on $[\delta, M]$.
Their finitely many crossings prove item~\ref{item:canonical-endpoints-local-finiteness} of \Cref{lem:canonical-endpoints}.

Finally, with $r \coloneqq \kappa_0 - 1$ and $x \coloneqq p^{-1}$, we have
\[
Q_{\kappa_0}(p)^r
= \frac{(1 + x)^{\kappa_0} - 1}{\kappa_0 x} > 1
\]
by strict convexity of $t \mapsto t^{\kappa_0}$.  Hence
$Q_{\kappa_0}(p)p^{-\epsilon} > 1$ for all sufficiently small $\epsilon > 0$.
Every fixed prime therefore eventually divides $\mathcal N_{\kappa_0}(\epsilon)$, proving that
$\mathcal N_{\kappa_0}(\epsilon) \to \infty$.  At a transition between consecutive values, local
finiteness and continuity force the extremizers on the two sides to tie.
\end{proof}

\begin{definition}\label{def:canonical-normalized-endpoints}
For fixed $\kappa_0$, the distinct values of $\mathcal N_{\kappa_0}(\epsilon)$ are the
\defi{canonical normalized $\kappa_0$-endpoints}.  Two such endpoints are
\defi{consecutive} when no other distinct value lies strictly between them.
\end{definition}

The normalized Robin quotient between two consecutive endpoints is controlled
by convexity in $\log n$.

\begin{lemma}\label{lem:endpoint-principle}
For consecutive canonical normalized $\kappa_0$-endpoints $N < N'$, if
\begin{equation}\label{eq:endpoint-principle-hypotheses}
Q_{\kappa_0}(N) < \RobinScale{N}
\qquad\hbox{and}\qquad
Q_{\kappa_0}(N') < \RobinScale{N'},
\end{equation}
then $Q_{\kappa_0}(n) < \RobinScale{n}$ for each integer $N \leq n \leq N'$.
\end{lemma}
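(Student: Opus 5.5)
The plan is to use the common supporting parameter from \Cref{lem:canonical-endpoints} and convexity in $\log n$, as in Robin's classical argument for colossally abundant numbers. Let $\epsilon_0 > 0$ be a parameter at which both $N$ and $N'$ maximize $Q_{\kappa_0}(n)n^{-\epsilon_0}$; the last assertion of \Cref{lem:canonical-endpoints} supplies one. Put $c \coloneqq \log Q_{\kappa_0}(N) - \epsilon_0\log N = \log Q_{\kappa_0}(N') - \epsilon_0\log N'$. Then for every integer $n \ge 1$,
\[
\log Q_{\kappa_0}(n) \le c + \epsilon_0\log n .
\]
So on $[\log N, \log N']$ the quantity $\log Q_{\kappa_0}(n)$ lies below the affine function $\ell(t) \coloneqq c + \epsilon_0 t$ of $t = \log n$. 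This line passes through the two endpoint values.

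Next I would compare $\ell$ with the Robin scale. Put $g(t) \coloneqq \log\bigl(e^\gamma\log t\bigr) = \gamma + \log\log t$. Since $g'(t) = 1/(t\log t)$ is decreasing for $t > 1$, $g$ is concave on $(1,\infty)$. The hypotheses \eqref{eq:endpoint-principle-hypotheses} say $\ell(\log N) < g(\log N)$ and $\ell(\log N') < g(\log N')$. Consider $\ell - g$: it is convex, being affine minus concave, and negative at both endpoints. Hence it is negative on the whole interval $[\log N, \log N']$. For $N \le n \le N'$ this gives
\[
\log Q_{\kappa_0}(n) \le \ell(\log n) < g(\log n),
\]
which is the claim.

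The one technical point is the domain of $g$. We need $\log N > 1$, i.e.\ $N > e$, so that $\log\log N$ is defined and positive and concavity applies on the interval. If $N \le e$, the hypothesis $Q_{\kappa_0}(N) < e^\gamma\log\log N$ is either meaningless or fails, since $Q_{\kappa_0}(N) \ge 1$ by \Cref{thm:lower-order}'s local bound and the right side is $\le 0$ or undefined. So the hypotheses themselves implicitly force $N \ge 3$. I would record this explicitly, and the rest is the routine convexity argument above.
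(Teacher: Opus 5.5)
Your proposal is correct and is essentially the paper's proof. Both take the common supporting parameter from \Cref{lem:canonical-endpoints}, which gives the affine majorant $c + \epsilon_0\log n$, and then use convexity of $t \mapsto \epsilon_0 t - \log\log t$ (equivalently, concavity of $\log\log t$) to push the maximum of the log Robin quotient to the endpoints. Your remark that the hypotheses force $N \geq 3$, so that $t > 1$ on the whole interval, makes explicit a point the paper leaves implicit.
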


\begin{proof}
By \Cref{lem:canonical-endpoints}, there is a common supporting parameter
$\epsilon > 0$ for which both integers maximize
\eqref{eq:normalized-ca-definition}.  Hence, for $N \leq n \leq N'$, we have
\[
\log Q_{\kappa_0}(n) \leq C + \epsilon\log n
\]
with equality at both endpoints, where
$C \coloneqq \log Q_{\kappa_0}(N) - \epsilon\log N$.  Put $t \coloneqq \log n$.  Then
\[
\log\frac{Q_{\kappa_0}(n)}{\RobinScale{n}}
\leq C - \gamma + \epsilon t - \log\log t.
\]
The function $t \mapsto \epsilon t - \log\log t$ is strictly convex for $t > 1$,
since its second derivative is
\[
\frac{\log t + 1}{t^2(\log t)^2} > 0.
\]
Its maximum on $[\log N, \log N']$ therefore occurs at an endpoint, where
\eqref{eq:endpoint-principle-hypotheses} makes the right-hand side negative.
\end{proof}

\subsection{Effective repeated support and the eventual theorem}

\Cref{lem:endpoint-principle} reduces the eventual inequality to the canonical endpoints, so it remains to control those endpoints effectively.

\begin{lemma}\label{lem:effective-repeated-support}
Let $N$ be a canonical normalized $\kappa_0$-endpoint, supported by $\epsilon$, and
let $P$ be its largest prime factor.  Put
\[
A_{\kappa_0} \coloneqq \frac{3(\kappa_0 + 3)}4.
\]
Then every prime $p < P$ divides $N$, and
\[
v_p(N) \geq 2
\qquad\text{for }A_{\kappa_0} \leq p \leq \frac{P - 1}{3}.
\]
Consequently, we have
\begin{equation}\label{eq:repeated-support-log-lower}
\log N \geq
\theta(P) + \theta\!\left(\frac{P - 1}{3}\right) - \theta(A_{\kappa_0}),
\end{equation}
and
\begin{equation}\label{eq:repeated-support-mertens-upper}
Q_{\kappa_0}(N) < \MertensProduct{P}.
\end{equation}
\end{lemma}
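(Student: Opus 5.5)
The plan is to read off every assertion from the supporting relation
\[
\mathcal L_{p,\epsilon}(a_p(\epsilon)) \ge \mathcal L_{p,\epsilon}(a) \quad\text{for all } a\ge 0,
\]
applied first at $P$ and then at the smaller primes. The first step is a \emph{local concavity} statement: for each prime $p$, the increments
\[
\Delta_p(a) \coloneqq \log Q_{\kappa_0}(p^a) - \log Q_{\kappa_0}(p^{a-1}), \qquad a\ge 1,
\]
should be decreasing in $a$. I would prove this from the power-mean representation \eqref{eq:local-power-mean-representation}. With $S_a=\sigma_{-1}(p^a)$ and $u_a = S_{a-1}/S_a$, the quantity $\Delta_p(a)$ is $\log(S_a/S_{a-1})$ plus the change in the logarithm of the power mean of $t$ on $[u_a,1]$. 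Both pieces shrink geometrically in $a$.

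Granting concavity, $a_p(\epsilon)\ge 1$ exactly when $\Delta_p(1)=\log Q_{\kappa_0}(p) \ge \epsilon\log p$, and $a_p(\epsilon)\ge 2$ exactly when $\Delta_p(2)\ge\epsilon\log p$. Since $P\mid N$, we get $\epsilon \le \log Q_{\kappa_0}(P)/\log P$.

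For the first claim, the function $p\mapsto Q_{\kappa_0}(p)$ is decreasing in $p$. Indeed $Q_{\kappa_0}(p)^{r} = ((1+x)^{\kappa_0}-1)/(\kappa_0 x)$ with $x=1/p$, and this is increasing in $x$. Also $\log p$ is increasing, and $Q_{\kappa_0}(p)>1$ by the strict convexity noted in the proof of \Cref{lem:canonical-endpoints}. Hence $\log Q_{\kappa_0}(p)/\log p$ is strictly decreasing, which gives $\log Q_{\kappa_0}(p) > \epsilon\log p$ for every $p<P$. So every $p<P$ divides $N$.

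For the second claim I need, for $A_{\kappa_0}\le p\le (P-1)/3$,
\[
\frac{\Delta_p(2)}{\log p} > \frac{\log Q_{\kappa_0}(P)}{\log P}.
\]
Since $\log p<\log P$, it suffices to show $\Delta_p(2) > \log Q_{\kappa_0}(P)$.

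Heuristically, with $x=1/p$ we have $Q_{\kappa_0}(p)\approx 1+x/2$ and $Q_{\kappa_0}(p^2)/Q_{\kappa_0}(p)\approx 1+x/2-O(\kappa_0 x^2)$. So the comparison is roughly $\frac1{2p}(1-c\kappa_0/p) > \frac1{2P}$, which holds once $p\le (P-1)/3$ and $p\gtrsim\kappa_0$. I will make this explicit with elementary two-sided bounds. For the upper bound, $\log Q_{\kappa_0}(P)\le \frac1r\log\bigl(1+\frac{r}{2P}+\frac{r(r-1)}{6P^2}\bigr)$ or similar, obtained from Taylor expansion with a remainder controlled by convexity. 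For the lower bound on $\Delta_p(2)$, I will bound the power mean on $[u_2,1]$ below and the one on $[u_1,1]$ above, each by its second-order Taylor polynomial in $1-u$. The constant $A_{\kappa_0}=3(\kappa_0+3)/4$ should emerge from requiring that the $O(\kappa_0/p^2)$ loss is absorbed by the factor-$3$ gap between $p$ and $P$.

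This explicit inequality, together with verifying the concavity of $a\mapsto \log Q_{\kappa_0}(p^a)$ (at least $\Delta_p(1)\ge\Delta_p(2)\ge\Delta_p(3)\ge\cdots$), is the main obstacle. If full concavity is awkward, I would try the weaker facts that suffice. For the bound on $\epsilon$, show $\log Q_{\kappa_0}(P^a)\le a\log Q_{\kappa_0}(P)$. For the second claim, show $\mathcal L_{p,\epsilon}(2)>\mathcal L_{p,\epsilon}(1)$ and $\mathcal L_{p,\epsilon}(2)>\mathcal L_{p,\epsilon}(0)$; because $a_p(\epsilon)$ is the largest maximizer, these already force $a_p(\epsilon)\ge 2$.

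The consequences are then routine. Since every prime up to $P$ divides $N$, and those in $[A_{\kappa_0},(P-1)/3]$ divide it at least squared, we get $\log N\ge \theta(P)+\sum_{A_{\kappa_0}\le p\le (P-1)/3}\log p \ge \theta(P)+\theta((P-1)/3)-\theta(A_{\kappa_0})$, which is \eqref{eq:repeated-support-log-lower}. For \eqref{eq:repeated-support-mertens-upper}, \Cref{lem:power-mean} and \Cref{lem:local-mertens-factor} give $Q_{\kappa_0}(N)<\sigma_{-1}(N)<M(N)=\prod_{p\le P}(1-p^{-1})^{-1}$, since the prime support of $N$ is exactly the set of primes $p\le P$.
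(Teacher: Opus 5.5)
There is a genuine gap. The concavity of $a \mapsto \log Q_{\kappa_0}(p^a)$ that your argument rests on is false, and so is your fallback $\log Q_{\kappa_0}(P^a) \le a\log Q_{\kappa_0}(P)$.

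\begin{itemize}
\item \textbf{Where it fails.} The failure occurs at the very first step, comparing $\Delta_p(2)$ with $\Delta_p(1)$. The expansion in the remark on the endpoint $\kappa = 5$ gives $Q_{\kappa_0}(p^2)/Q_{\kappa_0}(p)^2 = 1 + (5 - \kappa_0)/(12p^2) + O_{\kappa_0}(p^{-3})$. So $\Delta_p(2) > \Delta_p(1)$ for all large $p$ whenever $\kappa_0 < 5$. Concretely, at $\kappa_0 = 2$ you have $Q_2(p) = 1 + \frac1{2p}$ and $Q_2(p^2) = 1 + \frac1p + \frac1{2p^2}$, so $Q_2(p^2) - Q_2(p)^2 = \frac1{4p^2} > 0$ for every prime.
\item \textbf{Why your heuristic misses it.} The heuristic that both pieces shrink geometrically holds only from $a = 2$ on. The power-mean piece of $\Delta_p(2)$ contains $-\log$ of the power mean on $[u_1,1]$. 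That term is about $1/(2p)$, the same size as $\Delta_p(1)$, so the comparison is decided by second-order terms whose sign depends on $\kappa_0$.
\item \textbf{Consequence for $\epsilon$.} The bound $\epsilon \le \log Q_{\kappa_0}(P)/\log P$ fails in general. At $\kappa_0 = 2$, take $\epsilon = \max_{a \ge 1}\log Q_2(P^a)/(a\log P)$. As in the proof of \Cref{thm:bounded-eventual}, $\mathcal N_2(\epsilon)$ is a canonical endpoint with largest prime factor $P$. Yet $\epsilon\log P \ge \frac12\log Q_2(P^2) > \log Q_2(P)$; for $P = 5$ this reads $0.0994\ldots > 0.0953\ldots$.
\end{itemize}
Both your divisibility step and your reduction of the second claim to $\Delta_p(2) > \log Q_{\kappa_0}(P)$ come from this bound, so neither is established. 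Your ``exactly when'' criteria for $a_p(\epsilon) \ge 1$ and $a_p(\epsilon) \ge 2$ fail for the same reason.

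The repair avoids concavity entirely. Put $x = 1/p$ and $r = \kappa_0 - 1$.
\begin{itemize}
\item \textbf{Divisibility.} Compare at $a = v_P(N)$ rather than at exponent $1$. This needs $Q_{\kappa_0}(p^a)$ strictly decreasing in $p$ for every fixed $a$, not just $a = 1$. That follows from $Q_{\kappa_0}(p^a)^{r} = \int_0^1\bigl((1 - t)\sigma_{-1}(p^{a-1}) + t\sigma_{-1}(p^a)\bigr)^{r}\,dt$. Then $Q_{\kappa_0}(p^a)p^{-a\epsilon} > Q_{\kappa_0}(P^a)P^{-a\epsilon} \ge 1$ for $p < P$.
\item \textbf{Upper bound on $\epsilon$.} Use \Cref{lem:power-mean}: $Q_{\kappa_0}(P^a) < P/(P-1)$ gives $\epsilon < \log(P/(P-1))/\log P < 1/((P-1)\log P)$.
\item \textbf{Lower bound on $\Delta_p(2)$.} You have $\log Q_{\kappa_0}(p) \le \frac x2 + \frac{r x^2}{8}$, from $(1+s)^r \le e^{rs}$, convexity, and $\log\cosh z \le z^2/2$. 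Together with $Q_{\kappa_0}(p^2) > 1 + x$, this gives $\Delta_p(2) \ge \frac1{2p} - \frac{\kappa_0 + 3}{8p^2} \ge \frac1{3p}$ for $p \ge A_{\kappa_0}$.
\item \textbf{Conclusion.} If $v_p(N) = 1$, optimality of exponent $1$ against exponent $2$ forces $\epsilon \ge 1/(3p\log p)$. This contradicts the upper bound when $3p \le P - 1$.
\end{itemize}
This is where the factor $3$ and $A_{\kappa_0}$ come from. The concavity-free bound $\epsilon\log P < \log(P/(P-1)) \approx 1/P$ is about twice the $1/(2P)$ you were aiming at. Replacing $\frac1{2p}$ by $\frac1{3p}$ leaves room for the second-order loss.

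Your fallback criterion, that $\mathcal L_{p,\epsilon}(2) > \mathcal L_{p,\epsilon}(1)$ and $\mathcal L_{p,\epsilon}(2) > \mathcal L_{p,\epsilon}(0)$ force $a_p(\epsilon) \ge 2$, is sound. Your derivations of the two displayed consequences are also fine.
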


\begin{proof}
For fixed $a \geq 1$, write $S_j(p) \coloneqq \sigma_{-1}(p^j)$.  \eqref{eq:local-power-mean-representation} gives
\[
Q_{\kappa_0}(p^a)^{\kappa_0 - 1}
= \int_0^1\bigl((1 - t)S_{a - 1}(p) + tS_a(p)\bigr)^{\kappa_0 - 1}\,dt.
\]
Both endpoints decrease strictly with $p$, so $Q_{\kappa_0}(p^a)$ does as well.  If
$P^a \parallel N$ and $p < P$, then
\[
Q_{\kappa_0}(p^a)p^{-a\epsilon}
> Q_{\kappa_0}(P^a)P^{-a\epsilon} \geq 1,
\]
so every prime below $P$ occurs in $N$.

We now put $r \coloneqq \kappa_0 - 1$ and $x \coloneqq 1 / p$.  For exponent one, we have
\[
Q_{\kappa_0}(p)^r = \frac1x\int_0^x(1 + s)^r\,ds.
\]
Since $(1 + s)^r \leq e^{rs}$ and the exponential is convex, we have
\[
\frac1x\int_0^xe^{rs}\,ds
\leq \frac{1 + e^{rx}}2
= e^{rx / 2}\cosh(rx / 2).
\]
Using $\log\cosh z \leq z^2 / 2$ gives
\begin{equation}\label{eq:qk-p-upper}
\log Q_{\kappa_0}(p) \leq \frac{x}{2} + \frac{r x^2}{8}.
\end{equation}
The factor $Q_{\kappa_0}(p^2)$ is the power mean of order $r$ on
$[1 + x, 1 + x + x^2]$, so $Q_{\kappa_0}(p^2) > 1 + x$.  Hence
\begin{equation}\label{eq:qk-p2-lower}
\log Q_{\kappa_0}(p^2) \geq \log(1 + x) \geq x - \frac{x^2}{2}.
\end{equation}
Subtracting \eqref{eq:qk-p-upper} from \eqref{eq:qk-p2-lower} yields
\[
\log\frac{Q_{\kappa_0}(p^2)}{Q_{\kappa_0}(p)}
\geq \frac{1}{2p} - \frac{\kappa_0 + 3}{8p^2}
\geq \frac1{3p}
\qquad\text{for }p \geq A_{\kappa_0}.
\]

If $p \parallel N$, local optimality of exponent one against exponent two
therefore gives
\begin{equation}\label{eq:epsilon-lower-first-power}
\epsilon \geq \frac1{3p\log p}
\qquad\text{for }p \geq A_{\kappa_0}.
\end{equation}
On the other hand, the positive exponent at $P$ beats exponent zero, and
\Cref{lem:power-mean} gives
\begin{equation}\label{eq:epsilon-upper-largest-prime}
\epsilon
< \frac{\log(P / (P - 1))}{\log P}
< \frac1{(P - 1)\log P}.
\end{equation}
For $p \leq (P - 1) / 3$, \eqref{eq:epsilon-lower-first-power} contradicts
\eqref{eq:epsilon-upper-largest-prime}.  Thus every prime in the asserted
range occurs at least twice.  Summing logarithms gives
\eqref{eq:repeated-support-log-lower}.  Finally,
\Cref{lem:power-mean} gives $Q_{\kappa_0}(p^a) < p / (p - 1)$ prime by prime, proving
\eqref{eq:repeated-support-mertens-upper}.
\end{proof}

\begin{proposition}\label{prop:bounded-parameter-tail}
Put
\begin{equation}\label{eq:bounded-tail-C}
C_{\kappa_0} \coloneqq \theta\!\left(\frac{3(\kappa_0 + 3)}4\right),
\end{equation}
and
\begin{equation}\label{eq:bounded-tail-X}
X_{\kappa_0} \coloneqq
\max\left\{
2{,}278{,}382,
\frac{100}{31}\left(C_{\kappa_0} + \frac13\right)
\right\}.
\end{equation}
Every canonical normalized $\kappa_0$-endpoint whose largest prime factor $P$
satisfies $P > X_{\kappa_0}$ obeys
\begin{equation}\label{eq:bounded-tail-robin}
Q_{\kappa_0}(N) < \RobinScale{N}.
\end{equation}
\end{proposition}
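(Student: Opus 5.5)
The plan is to combine the two conclusions of \Cref{lem:effective-repeated-support} with the explicit estimates of \Cref{prop:dusart-source} and \Cref{prop:schoenfeld}. Throughout, write $L \coloneqq \log P$ and assume $P > X_{\kappa_0} \ge 2{,}278{,}382$, so that $L > 14.6$.

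First, the upper bound. By \eqref{eq:repeated-support-mertens-upper} and \eqref{eq:dusart-reciprocal-product}, which applies because $P \ge 2{,}278{,}382$,
\[
Q_{\kappa_0}(N) < \MertensProduct{P} < e^\gamma L\left(1 + \frac{0.2}{L^3}\right).
\]
It therefore suffices to prove
\[
\log\log N \ge L + \log\!\left(1 + \frac{0.2}{L^3}\right) + \text{(anything positive)}.
\]
Since $L\log(1 + 0.2/L^3) \le 0.2/L^2$, this reduces to showing that $\log N \ge (1 + \eta)P$ for some fixed $\eta$ with $\log(1 + \eta) > 0.2/L^2$. As $0.2/L^2 < 10^{-3}$ in this range, any fixed $\eta \ge 0.002$ comfortably suffices.

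Second, the lower bound for $\log N$, which comes from \eqref{eq:repeated-support-log-lower}. Both $P$ and $(P - 1)/3$ exceed $\MertensThreshold$, so \eqref{eq:schoenfeld-lower} applies to each. Since $1/(8\log x) < 1/(8 \cdot 13.5) < 0.0093$ for both arguments, we get
\[
\theta(P) + \theta\!\left(\frac{P - 1}{3}\right)
\ge 0.9907\left(P + \frac{P - 1}3\right)
\ge 1.3209\,P - \frac13 .
\]
Subtracting $\theta(A_{\kappa_0}) = C_{\kappa_0}$ gives
\[
\log N \ge 1.3209\,P - \left(C_{\kappa_0} + \frac13\right).
\]
This is where the second term in $X_{\kappa_0}$ enters. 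The hypothesis $P > \frac{100}{31}\left(C_{\kappa_0} + \frac13\right)$ means exactly $C_{\kappa_0} + \frac13 < 0.31P$, so
\[
\log N > 1.0109\,P .
\]
Hence $\log\log N > L + \log 1.0109 > L + 0.0108$.

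Third, we conclude:
\[
e^\gamma\log\log N > e^\gamma(L + 0.0108) > e^\gamma\left(L + \frac{0.2}{L^2}\right) \ge e^\gamma L\left(1 + \frac{0.2}{L^3}\right) > Q_{\kappa_0}(N),
\]
which is \eqref{eq:bounded-tail-robin}.

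The main obstacle is purely bookkeeping in the second step. One must check that the Schoenfeld losses, together with the constant $1/3$ coming from $(P - 1)/3$, really fit inside the margin $1/3 - 0.31 = 0.0233$ left by the choice $100/31$. I would verify this carefully with the crude bound $L > 14.6$; in particular, $\log((P - 1)/3) > 13.5$ controls the Schoenfeld correction for the smaller argument. The argument also needs the condition $A_{\kappa_0} \le (P - 1)/3$ from \Cref{lem:effective-repeated-support}. This holds because $A_{\kappa_0} \le 0.31P < (P - 1)/3$ follows from the same hypothesis, using $\theta(A_{\kappa_0}) \ge$ a positive multiple of $A_{\kappa_0}$, or trivially whenever $A_{\kappa_0}$ is small. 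If it fails, the interval of repeated primes is empty and one should instead reread \eqref{eq:repeated-support-log-lower} as stated, since it is already given unconditionally by the lemma.
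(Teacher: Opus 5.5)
Your proof is correct and follows essentially the same route as the paper. Both combine \Cref{lem:effective-repeated-support} with Dusart's product bound on the upper side and Schoenfeld's bound at both $P$ and $(P-1)/3$ on the lower side, and both use the hypothesis $C_{\kappa_0} + \tfrac13 < 0.31P$ to absorb the subtracted constant; your Schoenfeld factor $0.9907$ is sharper than the paper's $71/72$, which only widens the margin. There are two harmless slips: the intermediate target ``$\log\log N \ge L + \log(1 + 0.2/L^3) + \cdots$'' should read $\log\log N > L + 0.2/L^2$, which is what your final chain actually verifies, and your side argument for $A_{\kappa_0} \le (P-1)/3$ is unnecessary and not quite justified, since \eqref{eq:repeated-support-log-lower} holds trivially (its right side is at most $\theta(P) \le \log N$) when that range is empty.
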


\begin{proof}
By \Cref{lem:effective-repeated-support}, it is enough to prove
\[
\MertensProduct{P}
< e^\gamma\log\left(
\theta(P) + \theta\!\left(\frac{P - 1}{3}\right) - C_{\kappa_0}
\right).
\]
We suppose $P \geq 2{,}278{,}382$.  Then both $P$ and $(P - 1) / 3$ exceed
$\MertensThreshold$.  Since $\log\MertensThreshold > 9$, \eqref{eq:schoenfeld-lower} gives
\[
\theta(P) + \theta\!\left(\frac{P - 1}{3}\right) - C_{\kappa_0}
> \frac{71}{54}P - \frac13 - C_{\kappa_0}.
\]
By \eqref{eq:dusart-reciprocal-product}, we have
\[
\MertensProduct{P}
< e^\gamma\log P\left(1 + \frac{0.2}{(\log P)^3}\right)
= e^\gamma\left(\log P + \frac{0.2}{(\log P)^2}\right).
\]
It consequently suffices that
\[
\frac{71}{54}P - \frac13 - C_{\kappa_0}
> P\exp\!\left(\frac{0.2}{(\log P)^2}\right).
\]
Again using $\log P > 14$, we have
\[
\exp\!\left(\frac{0.2}{(\log P)^2}\right)
< e^{1 / 980} < \frac{980}{979} < \frac{1003}{1000}.
\]
Moreover
\[
\frac{71}{54} - \frac{1003}{1000}
= \frac{8419}{27{,}000} > \frac{31}{100}.
\]
Thus \eqref{eq:bounded-tail-robin} follows from
\[
\frac{31}{100}P > C_{\kappa_0} + \frac13,
\]
which is exactly the second term in the maximum defining $X_{\kappa_0}$ in \eqref{eq:bounded-tail-X}.
\end{proof}

\begin{theorem}[{\Cref{thm:intro-unconditional}}]\label{thm:bounded-eventual}
For every real $\kappa_0 > 1$, there is an explicit finite integer $N_{\kappa_0}$ such
that
\begin{equation}\label{eq:bounded-eventual-robin}
\rkrho{\kappa}(n) < \RobinScale{n}
\end{equation}
for each integer $n \geq N_{\kappa_0}$ and every real $1 < \kappa \leq \kappa_0$.

More explicitly, let $q_{\kappa_0}$ be the smallest prime greater than $X_{\kappa_0}$ and put
\begin{equation}\label{eq:eta-kappa0-definition}
\eta_{\kappa_0}(q_{\kappa_0})
\coloneqq
\max_{a \geq 1}
\frac{\log Q_{\kappa_0}(q_{\kappa_0}^a)}{a\log q_{\kappa_0}}.
\end{equation}
Then the canonical endpoint
\begin{equation}\label{eq:E-kappa0-definition}
E_{\kappa_0} \coloneqq \mathcal N_{\kappa_0}\bigl(\eta_{\kappa_0}(q_{\kappa_0})\bigr)
\end{equation}
has largest prime factor $q_{\kappa_0}$, and \eqref{eq:bounded-eventual-robin} holds for each $n \geq E_{\kappa_0}$.
In particular, the completely explicit cutoff
\begin{equation}\label{eq:explicit-NK}
N_{\kappa_0}^{\rm eff}
\coloneqq
\left\lceil
\exp\left(
\frac{\log\displaystyle\MertensProduct{q_{\kappa_0}}\,\log q_{\kappa_0}}
{\log Q_{\kappa_0}(q_{\kappa_0})}
\right)
\right\rceil
\end{equation}
is admissible.
\end{theorem}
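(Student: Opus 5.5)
First reduce to the single parameter $\kappa_0$. By \Cref{lem:power-mean}, $\rkrho{\kappa}(n) \le \rkrho{\kappa_0}(n) = Q_{\kappa_0}(n)$ for every $1 < \kappa \le \kappa_0$ and every $n > 1$. It is therefore enough to prove $Q_{\kappa_0}(n) < \RobinScale{n}$ for all $n \ge E_{\kappa_0}$. The structure is the usual one: \Cref{prop:bounded-parameter-tail} handles every canonical endpoint whose largest prime exceeds $X_{\kappa_0}$. \Cref{lem:endpoint-principle} then fills in the integers between consecutive endpoints. Finally, \Cref{lem:canonical-endpoints} guarantees that the endpoints increase to infinity, so every $n \ge E_{\kappa_0}$ lies between two consecutive endpoints that are both at least $E_{\kappa_0}$.

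Next I identify $E_{\kappa_0}$. Put $q = q_{\kappa_0}$ and $\eta = \eta_{\kappa_0}(q)$. The maximum in \eqref{eq:eta-kappa0-definition} is attained, since $Q_{\kappa_0}(q^a) < q/(q-1)$ forces the ratio to tend to $0$, and it is positive because $Q_{\kappa_0}(q) > 1$. At parameter $\eta$, some positive exponent at $q$ ties with exponent $0$. The tie rule in \Cref{lem:canonical-endpoints} then gives $a_q(\eta) \ge 1$, so $q \mid E_{\kappa_0}$.

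For a prime $p > q$, I need $a_p(\eta) = 0$. This amounts to $\log Q_{\kappa_0}(p^a) < a\eta\log p$ for every $a \ge 1$. I would compare $\max_a \log Q_{\kappa_0}(p^a)/(a\log p)$ with the same quantity at $q$. The numerator $Q_{\kappa_0}(p^a)$ decreases strictly in $p$, as shown in the proof of \Cref{lem:effective-repeated-support}, and the denominator $\log p$ increases. So the ratio is strictly smaller at $p$ for each fixed $a$, and hence so is its maximum over $a$. This shows that the largest prime factor of $E_{\kappa_0}$ is exactly $q > X_{\kappa_0}$.

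Every later endpoint $N \ge E_{\kappa_0}$ comes from some $\epsilon \le \eta$. By the monotonicity in \Cref{lem:canonical-endpoints}, $q$ still divides $N$, so the largest prime of $N$ is also greater than $X_{\kappa_0}$. \Cref{prop:bounded-parameter-tail} therefore applies to every such endpoint, and \Cref{lem:endpoint-principle} covers all $n \ge E_{\kappa_0}$.

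For the explicit cutoff \eqref{eq:explicit-NK}, I would bound $E_{\kappa_0}$ from above. Every prime dividing $E_{\kappa_0}$ is at most $q$, and for each prime power $p^{a_p} \parallel E_{\kappa_0}$ the maximizing property gives $a_p\eta\log p \le \log Q_{\kappa_0}(p^{a_p})$. Summing over $p$ yields
\[
\eta\log E_{\kappa_0} \le \log Q_{\kappa_0}(E_{\kappa_0}) < \log\MertensProduct{q}.
\]
Taking $a = 1$ in the definition of $\eta$ gives $\eta \ge \log Q_{\kappa_0}(q)/\log q$. Combining the two bounds gives $E_{\kappa_0} \le N_{\kappa_0}^{\rm eff}$.

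The main point needing care is the tie-breaking at $q$. We need the largest-maximizer convention to actually put $q$ into $\mathcal N_{\kappa_0}(\eta)$ while no larger prime enters. The strict monotonicity in $p$ handles the second requirement, and the tie convention handles the first.
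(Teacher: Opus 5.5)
Your proposal is correct and follows the paper's proof essentially step for step. The shared steps are: the reduction to $\kappa = \kappa_0$ by \Cref{lem:power-mean}; the largest-at-a-tie convention putting $q_{\kappa_0}$ into $E_{\kappa_0}$, while the strict decrease of $\log Q_{\kappa_0}(p^a)/(a\log p)$ in $p$ excludes larger primes; retention of $q_{\kappa_0}$ in every later endpoint; \Cref{prop:bounded-parameter-tail} combined with \Cref{lem:endpoint-principle}; and the same optimality-against-exponent-zero bound giving $N_{\kappa_0}^{\rm eff}$. One detail your ratio comparison in $p$ uses implicitly and should state is that $\log Q_{\kappa_0}(p^a) > 0$ for every $a \ge 1$, which holds since $Q_{\kappa_0}(p^a) > \sigma_{-1}(p^{a-1}) \ge 1$.
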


\begin{proof}
By \Cref{lem:power-mean}, it suffices to prove the assertion at $\kappa = \kappa_0$.
For fixed $a$, the quantity
\[
p \longmapsto \frac{\log Q_{\kappa_0}(p^a)}{a\log p}
\]
is strictly decreasing on the primes: its numerator is positive and strictly
decreasing, while its denominator is strictly increasing.  For each fixed
prime $p$, moreover, we have
\[
0 < \frac{\log Q_{\kappa_0}(p^a)}{a\log p}
< \frac{\log(p / (p - 1))}{a\log p} \longrightarrow 0,
\]
whereas the $a = 1$ term is positive.  Thus the maximum defining
$\eta_{\kappa_0}(p)$ is attained after a finite search.  It follows as well that
$\eta_{\kappa_0}(p)$ is strictly decreasing with $p$.  At
$\epsilon = \eta_{\kappa_0}(q_{\kappa_0})$, the largest-at-a-tie convention includes $q_{\kappa_0}$, and
no larger prime occurs.  Thus $E_{\kappa_0}$ has largest prime factor exactly $q_{\kappa_0}$ and
is good by \Cref{prop:bounded-parameter-tail}.  Every later endpoint retains
$q_{\kappa_0}$ by \Cref{lem:canonical-endpoints}, so every later endpoint is good as
well.  \Cref{lem:endpoint-principle} now gives \eqref{eq:bounded-eventual-robin} for all
$n \geq E_{\kappa_0}$.

For each fixed integer $\kappa_0$, the finite search in \eqref{eq:eta-kappa0-definition} is effective: the local
quantities have algebraic values whose relevant finite comparisons can be made
exactly.  This is the effective regime used in \Cref{thm:unconditional-interval}.  It remains only
to justify the explicit upper bound.  If
$p^{a_p} \parallel E_{\kappa_0}$, then local optimality against exponent zero gives
\[
a_p\eta_{\kappa_0}(q_{\kappa_0})\log p
< \log\frac p{p - 1}.
\]
Summing over $p \leq q_{\kappa_0}$ gives
\[
\log E_{\kappa_0}
< \frac{\log\displaystyle\MertensProduct{q_{\kappa_0}}}
{\eta_{\kappa_0}(q_{\kappa_0})}.
\]
Taking $a = 1$ in \eqref{eq:eta-kappa0-definition}, we obtain
\[
\eta_{\kappa_0}(q_{\kappa_0}) \geq \frac{\log Q_{\kappa_0}(q_{\kappa_0})}{\log q_{\kappa_0}}.
\]
Substitution proves $E_{\kappa_0} < N_{\kappa_0}^{\rm eff}$, and hence
\eqref{eq:explicit-NK} is an admissible cutoff.
\end{proof}

\begin{corollary}\label{cor:counterexample-escape}
Let $n_j \to \infty$ be any sequence satisfying
\[
\sigma_{-1}(n_j) > \RobinScale{n_j}.
\]
For each $j \in \bbZ_{>0}$, define
\[
\kappa_j^\dagger
\coloneqq
\inf\{\kappa > 1:\rkrho{\kappa}(n_j) \geq \RobinScale{n_j}\}.
\]
Then $\kappa_j^\dagger < \infty$ for each $j$, and
$\kappa_j^\dagger \to \infty$ as $j \to \infty$.
\end{corollary}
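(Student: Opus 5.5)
The plan has two parts, both reducing to results already established. First I show that each $\kappa_j^\dagger$ is finite, using the limit in \Cref{lem:power-mean}. Then I show $\kappa_j^\dagger \to \infty$, using the uniform eventual statement of \Cref{thm:bounded-eventual}.

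\emph{Finiteness.} Fix $j$. By \Cref{lem:power-mean}, $\rkrho{\kappa}(n_j) \nearrow \sigma_{-1}(n_j)$ as $\kappa \to \infty$. Since $\sigma_{-1}(n_j) > \RobinScale{n_j}$ by hypothesis, some finite $\kappa$ satisfies $\rkrho{\kappa}(n_j) > \RobinScale{n_j}$. The set in the definition of $\kappa_j^\dagger$ is therefore nonempty, and $\kappa_j^\dagger < \infty$. Note that $n_j > 1$ is needed to apply \Cref{lem:power-mean}. This holds for every $j$, because the hypothesis $\sigma_{-1}(n_j) > \RobinScale{n_j}$ only makes sense, and $\log\log n_j$ is only defined, when $n_j \ge 3$. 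By monotonicity the set is in fact an up-ray, as in \Cref{cor:counterexamples-propagate}, though this is not needed.

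\emph{Divergence.} Suppose the conclusion fails. Then there are a bound $K$ and a subsequence $j_i$ with $\kappa_{j_i}^\dagger \le K$ for all $i$. Set $\kappa_0 \coloneqq K + 1 > 1$. For each $i$, the definition of the infimum gives some $\kappa < \kappa_{j_i}^\dagger + 1 \le \kappa_0$ with $\kappa > 1$ and $\rkrho{\kappa}(n_{j_i}) \ge \RobinScale{n_{j_i}}$. By monotonicity in $\kappa$ this also gives $\rkrho{\kappa_0}(n_{j_i}) \ge \RobinScale{n_{j_i}}$, although it suffices to have some $\kappa \in (1, \kappa_0]$.

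Now apply \Cref{thm:bounded-eventual} with this $\kappa_0$. It gives a finite $N_{\kappa_0}$ such that $\rkrho{\kappa}(n) < \RobinScale{n}$ for every $n \ge N_{\kappa_0}$ and every $1 < \kappa \le \kappa_0$. Since $n_{j_i} \to \infty$, we eventually have $n_{j_i} \ge N_{\kappa_0}$, which contradicts the previous paragraph. Hence $\kappa_j^\dagger \to \infty$.

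There is no real obstacle here, since all the analytic content lives in \Cref{thm:bounded-eventual}. The only step needing care is the passage from the infimum to an actual violating parameter at most $\kappa_0$. Padding the bound by $1$ when choosing $\kappa_0$ handles this, so I never need to ask whether the infimum itself is attained.
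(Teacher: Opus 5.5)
Your proof is correct and follows the paper's argument: finiteness comes from the limit $\rkrho{\kappa}(n_j) \nearrow \sigma_{-1}(n_j)$ in \Cref{lem:power-mean}, and divergence from applying \Cref{thm:bounded-eventual} to a subsequence with bounded thresholds. The differences are cosmetic. Padding to $\kappa_0 = K + 1$ gives you a violating parameter in $(1, \kappa_0)$ straight from the definition of the infimum, where the paper instead invokes continuity in $\kappa$ to place the violation at the bound itself. Separately, your claim that $n_j \ge 3$ is slightly off, since $\log\log 2$ is defined, but all you use is $n_j > 1$, which does hold.
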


\begin{proof}
Finiteness follows from
$\rkrho{\kappa}(n_j) \nearrow \sigma_{-1}(n_j)$ as $\kappa \to \infty$.
If the thresholds failed to tend to infinity, some finite $\kappa_0$ would bound an
infinite subsequence.  By monotonicity and continuity in $\kappa$, each member
of that subsequence would satisfy
$\rkrho{\kappa_0}(n_j) \geq \RobinScale{n_j}$, contradicting
\Cref{thm:bounded-eventual} for sufficiently large $j$.
\end{proof}

The contrast with the original LCM-power family is therefore exact at the
level of tails.  There, every fixed $\kappa > 3 / 2$ retains an RH-equivalent
eventual inequality \cite[Theorem~1.5]{Fan-Mits-Molnar}.  Here every bounded
parameter range has only finitely many possible exceptions, unconditionally;
if RH is false, the reciprocal parameter at which classical counterexamples
become visible must itself escape to infinity.

\begin{remark}\label{rem:kappa-five-square-support}
The endpoint $\kappa = 5$ admits a shorter direct argument.  Put
$Q(n) \coloneqq \rkrho{5}(n)$ and $x \coloneqq 1 / p$.  Clearing denominators in
$Q(p^2) > Q(p)^2$ gives
\[
5\left((1 + x + x^2)^5 - (1 + x)^5\right)
- \left((1 + x)^5 - 1\right)^2
= x^6\left(4x^4 + 15x^3 + 30x^2 + 30x + 15\right) > 0.
\]
Thus exponent one cannot occur at a normalized $5$-endpoint.  If $P$ is its
largest prime factor, every prime $p \leq P$ occurs at least twice, so
\[
N \geq \primorial{P}^{\,2},
\qquad
Q(N) < \MertensProduct{P}.
\]
The explicit Mertens and Chebyshev estimates above, together with a short
finite check and \cite[Theorem~2]{MorrillPlatt} for the initial range, therefore
give an independent direct proof on $1 < \kappa \leq 5$.  The value $5$ is the
boundary of this exact square-support argument: for fixed $\kappa > 1$, as
$p \to \infty$ through the primes,
\[
\frac{\rkrho{\kappa}(p^2)}{\rkrho{\kappa}(p)^2}
= 1 + \frac{5 - \kappa}{12p^2} + O_\kappa(p^{-3}).
\]
For $\kappa > 5$ the square-support inequality therefore reverses for all
sufficiently large primes.  The general argument replaces exact square support
by \Cref{lem:effective-repeated-support}; the certified result
\Cref{thm:unconditional-interval} makes the special $\kappa = 5$ proof
unnecessary for the main theorem.
\end{remark}

\subsection{Certified extension to \texorpdfstring{$\kappa = \CertifiedKappa$}{the certified parameter frontier}}

\Cref{thm:bounded-eventual} leaves only a finite prefix at any specified parameter.
For \Cref{thm:unconditional-interval}, directed interval arithmetic resolves this
prefix at $\kappa = \CertifiedKappa$.  The same finite argument applies at any
larger parameter once the corresponding endpoint data have been independently
verified.

\begin{theorem}[{\Cref{thm:intro-unconditional}}]\label{thm:unconditional-interval}
For every real $1 < \kappa \leq \CertifiedKappa$ and every integer $n > 5040$, we have
\begin{equation}\label{eq:certified-robin}
\rkrho{\kappa}(n) < \RobinScale{n}.
\end{equation}
\end{theorem}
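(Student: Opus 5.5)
By \Cref{lem:power-mean}, $\kappa \mapsto \rkrho{\kappa}(n)$ is increasing. It therefore suffices to prove \eqref{eq:certified-robin} at the single parameter $\kappa_0 = \CertifiedKappa$ for every $n > 5040$; the whole interval $1 < \kappa \le \CertifiedKappa$ then follows. Write $Q = Q_{\kappa_0}$. I split the range of $n$ into three pieces: a tail handled by \Cref{thm:bounded-eventual}, a middle range between consecutive canonical normalized endpoints handled by \Cref{lem:endpoint-principle}, and a short initial range checked directly.

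\emph{Tail.} First compute $C_{\kappa_0} = \theta(\CertifiedThetaArgument)$, using $3(\kappa_0+3)/4 = \CertifiedThetaArgument.25$, and certify it in an interval $[\CertifiedThetaLower, \CertifiedThetaUpper]$. Then $X_{\kappa_0} = \max\{2{,}278{,}382, \tfrac{100}{31}(C_{\kappa_0} + 1/3)\} = 2{,}278{,}382$. So \Cref{prop:bounded-parameter-tail} covers every endpoint whose largest prime factor exceeds $2{,}278{,}382$, and \Cref{thm:bounded-eventual} gives \eqref{eq:certified-robin} for all $n \ge E_{\kappa_0}$.

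One can lower this tail threshold before enumerating. Check the inequality of \Cref{prop:bounded-parameter-tail} directly, prime by prime in interval arithmetic, for largest primes $P$ between some analytic cutoff (around $\CertifiedAnalyticCutoff$) and $2{,}278{,}382$. Use the repeated-support bound \eqref{eq:repeated-support-log-lower} with exact $\theta$ values and the exact Mertens products. The last failure should occur near $P = \CertifiedLastBadPrime$. All endpoints with largest prime at least $\CertifiedTailPrime$ are then good.

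\emph{Endpoint prefix.} Enumerate, in increasing order, all canonical normalized $\kappa_0$-endpoints up to the first one whose largest prime factor is $\CertifiedTailPrime$. Start at $\epsilon$ large and decrease it. At each step, the next transition parameter is the largest $\epsilon' < \epsilon$ at which some prime $p$ prefers exponent $a_p+1$ (or higher) to $a_p$. This is the minimum over $p$ of a local slope of the form $\max_{b > a_p} \log\bigl(Q(p^b)/Q(p^{a_p})\bigr)/((b - a_p)\log p)$, and \Cref{lem:canonical-endpoints} guarantees the process lists exactly the consecutive endpoints. Store each endpoint by its exponent vector together with its certified value of $\log Q(N) - \gamma - \log\log\log N$.

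Evaluating $\log Q(p^a)$ stably at $\kappa_0 = 10^5$ is delicate, since both $\sigma_{-1}(p^a)^{\kappa_0}$ and $(1-u^{\kappa_0})$ involve enormous powers. I would use the power-mean form \eqref{eq:local-power-mean-representation}, writing $\log(1-u^{\kappa_0})$ via \texttt{log1p}/\texttt{expm1} with outward rounding. Verify that every endpoint in the list, about $\CertifiedEndpointEvents$ of them starting from $\CertifiedFirstEndpoint$, has a negative certified quotient. \Cref{lem:endpoint-principle} then covers all $n$ from the first endpoint onward.

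\emph{Initial range and main obstacle.} The first endpoint is presumably above $5040$, so $5040 < n < \CertifiedFirstEndpoint$ is not covered by the endpoint argument. Check these integers directly, computing $\rkrho{\kappa_0}(n)$ from its factorization in interval arithmetic and recording the minimal margin. I expect the main obstacle to be the certified enumeration of endpoints: making the tie-breaking and transition order rigorous under interval uncertainty, and keeping outward-rounded bounds for $\log Q$ tight at such a large exponent. When an interval comparison between two candidate transitions is ambiguous, I would first try splitting the decision by higher precision. If that fails, I would fall back on noting that \Cref{lem:endpoint-principle} only needs some supporting chain of maximizers, so a superset of candidate endpoints would suffice.
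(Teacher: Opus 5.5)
Your plan is essentially the paper's proof: reduce to $\kappa = 100{,}000$ by monotonicity, then use the repeated-support Mertens comparison (analytically for large $P$, prime by prime below that) to make every endpoint with largest prime $P \ge 227{,}977$ good, then certify the reconstructed endpoints from $55{,}440$ up to the entry of $227{,}977$ and apply \Cref{lem:endpoint-principle}, and finally scan $5041 \le n < 55{,}440$ directly. The differences are minor: the paper sharpens the analytic tail constant so the prime-by-prime check is only needed for $P < 246{,}590$ rather than up to $2{,}278{,}382$, and in your enumeration the next transition parameter is the maximum (not the minimum) over $p$ of your local slopes, as your own phrase ``the largest $\epsilon' < \epsilon$'' indicates.
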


\begin{proof}
By parameter monotonicity, it suffices to prove the result at
$\kappa = \CertifiedKappa$.  Put
\[
Q_{\CertifiedKappa}(n) = \rkrho{\CertifiedKappa}(n),
\qquad
C_{\CertifiedKappa} = \theta(\CertifiedThetaArgument).
\]
We sharpen the argument proving \eqref{eq:bounded-tail-robin} at this fixed
parameter.  For $P \geq 58{,}271$, \eqref{eq:schoenfeld-lower} and
\eqref{eq:dusart-reciprocal-product} give the endpoint inequality as soon as
\[
\frac{1022}{3375}P > C_{\CertifiedKappa} + \frac13.
\]
Directed interval arithmetic gives
\[
\CertifiedThetaLower < C_{\CertifiedKappa} < \CertifiedThetaUpper,
\]
so every canonical normalized $\CertifiedKappa$-endpoint with
$P \geq \CertifiedAnalyticCutoff$ satisfies the Robin inequality.

For smaller $P$, a directed-interval computation evaluates the stronger
Mertens comparison from repeated support prime by prime.  It finds the last
failure at $P = \CertifiedLastBadPrime$ and verifies the comparison for every
$P \geq \CertifiedTailPrime$.  Thus every canonical endpoint with
$\CertifiedTailPrime \leq P < \CertifiedAnalyticCutoff$ also satisfies the Robin
inequality.

An independent directed-interval computation reconstructs the remaining
canonical normalized $\CertifiedKappa$-endpoints.  At
$P = \CertifiedTailPrime$ it first determines the required local maximizing
exponent.  For each smaller prime $p$, it constructs the upper concave hull of
the local points
\[
\left(a, \frac{\log Q_{\CertifiedKappa}(p^a)}{\log p}\right)
\]
and uses directed intervals to separate every transition slope that appears on
that hull.  Merging the resulting transitions produces
$\CertifiedEndpointEvents$ canonical endpoint events down to the entry of
$\CertifiedTailPrime$.  The reconstruction finds
$\CertifiedFirstEndpoint$ as the first endpoint above $5040$ and verifies a
positive Robin margin at every reconstructed endpoint above $5040$.  The
smallest certified logarithmic margin exceeds
\[
\CertifiedEndpointMargin.
\]

A final directed-interval scan checks the $\CertifiedInitialChecks$ integers
$5041 \leq n < \CertifiedFirstEndpoint$.  The quantity
$\RobinScale{n} - Q_{\CertifiedKappa}(n)$ attains its smallest value at
$n = \CertifiedDirectMinimumN$, where it exceeds
\[
\CertifiedDirectMargin.
\]
We have therefore proved the inequality through $\CertifiedFirstEndpoint$.
Above that point, the finite reconstruction handles consecutive endpoints
with largest prime factor below $\CertifiedTailPrime$, and
\Cref{prop:bounded-parameter-tail} handles all later endpoints.
\Cref{lem:endpoint-principle} then yields \eqref{eq:certified-robin} at
$\kappa = \CertifiedKappa$, and parameter monotonicity extends it to every
$1 < \kappa \leq \CertifiedKappa$.
\end{proof}

The final section gives the certification repository and independent
verification procedure.

The choice to optimize the normalized function is essential.  The
unnormalized reciprocal extremizers leave the term $-\omega(n)\log\kappa$ after
the common supporting inequality is applied.  This term is not determined by
$\log n$, so endpoint convexity cannot control the normalized Robin quotient.
\Cref{app:unnormalized-extremal} develops the unnormalized theory and gives a
concrete failure between consecutive events.

\appendix
\crefalias{section}{appendix}

\section{The unnormalized reciprocal extremal problem}\label{app:unnormalized-extremal}

The $\kappa$-colossally abundant numbers of \cite[\S4]{Fan-Mits-Molnar}
maximize $\ksigma{\kappa}(n)n^{-\kappa\epsilon}$.  The reciprocal analogue
replaces $\ksigma{\kappa}$ by $\rksigma{\kappa}$.  We record it here because
its extremizers do not support the endpoint transfer used in
\Cref{sec:unconditional-results}.

Fix $\kappa > 1$.

\begin{definition}\label{def:reciprocal-colossally-abundant}
For $\epsilon > -1 / \kappa$, an integer $N$ is
\defi{reciprocally $\kappa$-colossally abundant for parameter $\epsilon$}
if
\begin{equation}\label{eq:compact-reciprocal-ca-inequality}
\frac{\rksigma{\kappa}(N)}{N^{\kappa\epsilon}}
\ge
\frac{\rksigma{\kappa}(n)}{n^{\kappa\epsilon}}
\qquad\text{for }n \ge 1.
\end{equation}
\end{definition}

\subsection{Canonical unnormalized extremizers}

The defining quotient is multiplicative, so its optimization separates prime by
prime.  For a prime $p$ and an integer $a \ge 0$, put
\[
U_{p, \epsilon}(a)
\coloneqq
\rksigma{\kappa}(p^a)p^{-a\kappa\epsilon}.
\]
The restriction $\epsilon > -1 / \kappa$ is exactly what makes the local
maximization finite.

\begin{proposition}\label{prop:unnormalized-extremizers}
For every $\epsilon > -1 / \kappa$ and every prime $p$, the function
$U_{p, \epsilon}$ has a largest maximizing exponent $a_p$.  Only finitely many
$a_p$ are positive, and
\[
N_\kappa(\epsilon) \coloneqq \prod_p p^{a_p}
\]
is the largest reciprocally $\kappa$-colossally abundant integer for parameter
$\epsilon$.  The distinct values of $N_\kappa(\epsilon)$ form a locally finite
event sequence.  Moreover,
\[
N_\kappa(\epsilon) \longrightarrow \infty
\qquad\text{as }\epsilon \searrow -1 / \kappa.
\]
\end{proposition}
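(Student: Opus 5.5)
The plan is to follow the proof of \Cref{lem:canonical-endpoints}, now with $U_{p,\epsilon}$ in place of $\mathcal L_{p,\epsilon}$. The new feature is that $\epsilon$ may be negative, down to $-1/\kappa$. The basic input is the upper bound of \Cref{lem:discrete-mvt}:
\[
\rksigma{\kappa}(p^a)\le \kappa p^{-a}\Bigl(\frac p{p-1}\Bigr)^{\kappa-1}.
\]
This gives
\[
U_{p,\epsilon}(a)\le \kappa\Bigl(\frac p{p-1}\Bigr)^{\kappa-1}p^{-a(1+\kappa\epsilon)}.
\]
Since $1+\kappa\epsilon>0$ exactly when $\epsilon>-1/\kappa$, the right side tends to $0$ as $a\to\infty$. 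A maximum of $U_{p,\epsilon}$ is therefore attained on a finite set of exponents, and we take $a_p$ to be the largest maximizing exponent. We compare with $U_{p,\epsilon}(0)=1$.

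\textbf{Finiteness of the support.} By the same bound, for every $a\ge1$ we have $U_{p,\epsilon}(a)\le \kappa 2^{\kappa-1}p^{-(1+\kappa\epsilon)}$. Once $p^{1+\kappa\epsilon}>\kappa 2^{\kappa-1}$, this is less than $1$, so $a_p=0$ for all but finitely many primes. Multiplicativity and positivity of $\rksigma{\kappa}$ then show that $n\mapsto \rksigma{\kappa}(n)n^{-\kappa\epsilon}$ factors as $\prod_p U_{p,\epsilon}(v_p(n))$. Hence $N_\kappa(\epsilon)$ maximizes it. Every global maximizer has each $v_p$ among the local maximizers, so $v_p\le a_p$ and $N_\kappa(\epsilon)$ is the largest maximizer.

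\textbf{Local finiteness.} First, the exchange argument of \Cref{lem:canonical-endpoints}, applied to $\log U_{p,\epsilon}(a)=\log\rksigma{\kappa}(p^a)-a\kappa\epsilon\log p$, shows that $a_p(\epsilon)$ is nonincreasing in $\epsilon$. Next, fix a compact interval $[\alpha,\beta]\subset(-1/\kappa,\infty)$ and set $\delta=1+\kappa\alpha>0$. The bounds above, taken uniformly with $\delta$ in place of $1+\kappa\epsilon$, do three things: they bound the primes that can appear, they bound the exponents that can compete for each such prime, and so they leave only finitely many affine-in-$\epsilon$ functions $\log U_{p,\epsilon}(a)$. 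These functions have finitely many crossings, so $N_\kappa(\epsilon)$ takes finitely many values on $[\alpha,\beta]$.

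\textbf{Divergence as $\epsilon\searrow-1/\kappa$.} This is the only genuinely new point, and I expect it to be the main one to get right. It suffices to show that each fixed prime $p$ eventually appears with arbitrarily large exponent. Write $\epsilon=-1/\kappa+\eta/\kappa$ with $\eta\searrow0$, so that $U_{p,\epsilon}(a)=p^a\rksigma{\kappa}(p^a)\,p^{-a\eta}$. By the lower bound in \Cref{lem:discrete-mvt}, $p^a\rksigma{\kappa}(p^a)\ge\kappa>1$ for all $a\ge1$. At $\eta=0$ the quantity $p^a\rksigma{\kappa}(p^a)$ increases to the limit $\kappa(p/(p-1))^{\kappa-1}$, by \eqref{eq:local-mertens-factor} and the mean value form. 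I plan to show it is in fact strictly increasing in $a$; this is the step most in need of care. In that case, for any fixed $A$ and all sufficiently small $\eta$, some exponent $a>A$ beats every exponent $\le A$. For those $\eta$ we get $a_p(\epsilon)>A$, and hence $N_\kappa(\epsilon)\ge p^{A}$.

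If strict monotonicity in $a$ fails, I will fall back on a softer argument that does not need it. The supremum $\kappa(p/(p-1))^{\kappa-1}$ is not attained, because it is approached only in the limit. So at $\eta=0$ no finite exponent maximizes, and by continuity in $\eta$ the maximizing exponent must tend to infinity. Either way, $N_\kappa(\epsilon)\ge p^{a_p(\epsilon)}\to\infty$.
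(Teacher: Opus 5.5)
Your proof is correct. The existence of $a_p$, the finiteness of the support, the identification of $N_\kappa(\epsilon)$ as the largest maximizer, and local finiteness all run as in the paper. Each comes from the upper bound of \Cref{lem:discrete-mvt} with $1+\kappa\epsilon$ bounded away from $0$.

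The divergence step is where you take a different route. The paper grows the prime support. It notes $U_{p,-1/\kappa}(1)=p\bigl((1+p^{-1})^\kappa-1\bigr)>1$, so by continuity each fixed prime divides $N_\kappa(\epsilon)$ once $\epsilon$ is close enough to $-1/\kappa$. You already had this input, since $p\,\rksigma{\kappa}(p)\ge\kappa>1$.

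You instead fix one prime and drive its exponent to infinity. The strict monotonicity you flagged as delicate is immediate. With $S_j=\sigma_{-1}(p^j)$, the quantity $p^a\rksigma{\kappa}(p^a)=(S_a^\kappa-S_{a-1}^\kappa)/(S_a-S_{a-1})$ is the secant slope of $t^\kappa$ over $[S_{a-1},S_a]$. Consecutive intervals abut, so strict convexity makes these slopes strictly increase in $a$. Your fallback, that the supremum $\kappa(p/(p-1))^{\kappa-1}$ is not attained, is also sound.

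The paper's argument is shorter. Yours gives the stronger conclusion that $a_p(\epsilon)\to\infty$ for every fixed $p$, so every fixed integer eventually divides $N_\kappa(\epsilon)$. It also shows why $\epsilon=-1/\kappa$ is exactly where the local maximization stops having a maximizer.
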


\begin{proof}
By the upper bound in \eqref{eq:discrete-mvt}, we have
\[
U_{p, \epsilon}(a)
\le
\kappa\left(\frac{p}{p - 1}\right)^{\kappa - 1}
p^{-a(1 + \kappa\epsilon)}
\qquad(a \ge 1).
\]
Since $1 + \kappa\epsilon > 0$, the right-hand side tends to $0$ as
$a \to \infty$.  A largest local maximizer therefore exists.  The same bound,
uniformly for $a \ge 1$, is less than $1 = U_{p, \epsilon}(0)$ for all sufficiently
large $p$.  Hence only finitely many $a_p$ are positive, and multiplicativity
gives the asserted global maximizer.

On a compact subinterval of $(-1 / \kappa, \infty)$, the exponent
$1 + \kappa\epsilon$ is bounded away from $0$.  The same estimate then bounds
uniformly both the primes and the exponents that can occur in a local
maximizer, proving local finiteness.

Finally,
\[
U_{p, -1 / \kappa}(1)
=
p\left((1 + p^{-1})^\kappa - 1\right) > 1
\]
by strict convexity of $t^\kappa$.  Thus every fixed prime occurs in
$N_\kappa(\epsilon)$ once $\epsilon > -1 / \kappa$ is sufficiently close to
$-1 / \kappa$, and the final assertion follows.
\end{proof}

\subsection{Unnormalized endpoints and the Robin quotient}

The event sequence in \Cref{prop:unnormalized-extremizers} is canonical for the
unnormalized quotient.  To transfer an inequality between consecutive events
as in \Cref{lem:endpoint-principle}, however, the factor left after applying a
common supporting parameter must be controlled by convexity in $\log n$.

\begin{remark}\label{rem:unnormalized-endpoint-transfer}
The Robin inequality concerns
\[
\frac{n\,\rksigma{\kappa}(n)}
{\kappa^{\omega(n)}\RobinScalePower{n}{\kappa - 1}}.
\]
For two consecutive unnormalized extremizers with a common supporting
parameter $\epsilon$, the logarithm of the remaining factor is
\begin{equation}\label{eq:unnormalized-endpoint-log-factor}
(1 + \kappa\epsilon)\log n
- (\kappa - 1)\log\!\bigl(\RobinScale{n}\bigr)
- \omega(n)\log\kappa.
\end{equation}
The first two terms in \eqref{eq:unnormalized-endpoint-log-factor} form a
smooth convex function of $\log n$; the last does not depend only on $\log n$.

At $\kappa = 2$, direct evaluation of the local factors in
\eqref{eq:compact-reciprocal-ca-inequality} gives consecutive unnormalized
extremizers $30$ and $210$.  But
\[
\rkRobinRatio{2}{30} \approx 0.7357686,
\qquad
\rkRobinRatio{2}{32} \approx 0.8822741,
\qquad
\rkRobinRatio{2}{210} \approx 0.5755899.
\]
Thus the interior integer $32$ has a larger normalized Robin ratio than either
endpoint.  The endpoint transfer in \Cref{lem:endpoint-principle} therefore
does not extend to these extremizers.
\end{remark}

\subsection{Local-gap comparisons}

One might instead try to bound the normalization gap prime by prime.  At
$\kappa = 2$, put
\begin{equation}\label{eq:k2-local-gap-factor}
r_{p, a} \coloneqq \frac{\rkrho{2}(p^a)}{\sigma_{-1}(p^a)}
= 1 - \frac{p - 1}{2(p^{a + 1} - 1)}.
\end{equation}
For an upper bound one would need to control $r_{p, a}$ from above.  But for
fixed $p$, it increases with $a$ and tends to $1$, so
\begin{equation}\label{eq:k2-local-gap-monotonicity}
a \ge 2\quad \Longrightarrow \quad r_{p, a} \ge r_{p, 2}.
\end{equation}
Replacing $a \ge 2$ by $2$ therefore gives a lower bound.

A second possible comparison uses the Euler product for squarefull integers.
An upper bound by the full Euler product would again require the opposite
inequality: since every factor lies in $(0, 1)$,
\begin{equation}\label{eq:squarefull-euler-product-comparison}
\prod_{p \mid n}(1 - p^{-3}) \geq \prod_p(1 - p^{-3}).
\end{equation}
Thus neither comparison gives the required upper bound.

\section*{Code and certification}

The code, numerical records, and certification files for the computer-assisted
arguments are available at
\url{https://github.com/grantmolnar/reciprocal-robin}, release~1.0.0, under the
MIT License.  The repository contains separate certificate generators and
independent verifiers, exact verification dependencies, and instructions for
reproducing the computations.
The full $\kappa = 100000$ endpoint certificate is $3{,}624{,}666{,}835$ bytes and
is not stored in Git.  The file \texttt{data/CERTIFICATION\_RESULTS.json}
records its SHA-256 digest and certified summary.  The released generator
reproduces the certificate, and the independent verifier checks it.

\section*{Acknowledgements}

We thank Steve Fan and Mits Kobayashi for discussions concerning
\cite{Fan-Mits-Molnar}, and Dave Platt for sharing the code used in
\cite{MorrillPlatt}.

\bibliographystyle{amsplain}

\begin{thebibliography}{10}

\bibitem{CaveneyNicolasSondow}
G.~Caveney, J.-L.~Nicolas, and J.~Sondow, \emph{On SA, CA, and GA numbers}, Ramanujan J. \textbf{29} (2012), 359--384.

\bibitem{Tenenbaum}
G.~Tenenbaum, \emph{Introduction to analytic and probabilistic number theory},
3rd ed., Graduate Studies in Mathematics, vol.~163, American Mathematical
Society, Providence, RI, 2015.

\bibitem{Dusart}
P.~Dusart, \emph{Explicit estimates of some functions over primes}, Ramanujan J. \textbf{45} (2018), no.~1, 227--251.

\bibitem{EssouabriSalinasToth}
D.~Essouabri, C.~Salinas Zavala, and L.~T\'{o}th, \emph{Mean values of multivariable multiplicative functions and applications to the average number of cyclic subgroups and multivariable averages associated with the LCM function}, J. Number Theory \textbf{236} (2022), 404--442.

\bibitem{Fan-Mits-Molnar}
S.~Fan, M.~Kobayashi, and G.~Molnar, \emph{A family of analogues to the Robin criterion}, Ramanujan J. \textbf{70} (2026), article~40.

\bibitem{Gronwall}
T.~H. Gr\"{o}nwall, \emph{Some asymptotic expressions in the theory of numbers}, Trans. Amer. Math. Soc. \textbf{14} (1913), no.~1, 113--122.

\bibitem{HeymanToth}
R.~Heyman and L.~T\'{o}th, \emph{Hyperbolic summation for functions of the GCD and LCM of several integers}, Ramanujan J. \textbf{62} (2023), no.~1, 273--290.

\bibitem{HilberdinkLucaToth}
T.~Hilberdink, F.~Luca, and L.~T\'{o}th, \emph{On certain sums concerning the gcd's and lcm's of $k$ positive integers}, Int. J. Number Theory \textbf{16} (2020), no.~1, 77--90.

\bibitem{HilberdinkToth}
T.~Hilberdink and L.~T\'{o}th, \emph{On the average value of the least common multiple of $k$ positive integers}, J. Number Theory \textbf{169} (2016), 327--341.

\bibitem{KimReciprocalLCM}
S.~Kim, \emph{On the reciprocal sum of lcm of $k$-tuples}, Res. Number Theory \textbf{8} (2022), article~44.

\bibitem{MorrillPlatt}
T.~Morrill and {D.\,J.} Platt, \emph{Robin's inequality for 20-free integers}, Integers \textbf{21} (2021), \#A28.

\bibitem{Robin}
G.~Robin, \emph{Grandes valeurs de la fonction somme des diviseurs et hypoth\`{e}se de Riemann}, J. Math. Pures Appl. (9) \textbf{63} (1984), no.~2, 187--213.

\bibitem{Schoenfeld}
L.~Schoenfeld, \emph{Sharper bounds for the Chebyshev functions $\theta(x)$ and $\psi(x)$. II}, Math. Comp. \textbf{30} (1976), no.~134, 337--360.

\bibitem{TothSurvey}
L.~T\'{o}th, \emph{Multiplicative arithmetic functions of several variables: a survey}, in \emph{Mathematics Without Boundaries: Surveys in Pure Mathematics}, T.~M. Rassias and P.~M. Pardalos (eds.), Springer, New York, 2014, 483--514.

\end{thebibliography}

\end{document}